 %%!TEX TS-program = latex
\documentclass[11pt,dvips]{article}
\usepackage{latexsym}
\usepackage{amsmath,amsthm,amsfonts,amssymb}
\usepackage{enumerate} 
\usepackage{graphicx}
\usepackage{pst-all}  
\usepackage[a4paper,textwidth=15cm,textheight=25cm]{geometry} 
\usepackage[margin=1cm]{caption}
 
%%%%%%%%%%%%%%%%%%%%%%%%%%%%%%%%%%%%%%%%%%%%%%%%%%%%%%%%%%%%%%%%%%%%%%%%%%%%%%%%%%%%%%%%%
%        Theoremes
%%%%%%%%%%%%%%%%%%%%%%%%%%%%%%%%%%%%%%%%%%%%%%%%%%%%%%%%%%%%%%%%%%%%%%%%%%%%%%%%%%%%%%%%

\newtheorem{thm}{Theorem}[section]
\newtheorem{dfn}[thm]{Definition} 
\newtheorem{cor}[thm]{Corollary}
\newtheorem{prop}[thm]{Proposition} 
\newtheorem{lem}[thm]{Lemma} 
 
\newtheorem*{no*}{Notation}
\theoremstyle{remark}
\newtheorem*{rem*}{Remark}
\newtheorem{rem}[thm]{Remark}
\newtheorem*{example*}{Example}
\newtheorem{example}[thm]{Example}

%%%%%%%%%%%%%%%%%%%%%%%%%%%%%%%%%%%%%%%%%%%%%%%%%%%%%%%%%%%%%%%%%%%%%%%%%%%%%%%%%%%%%%%%%%%%%%%%%%%%%%%%%%%%%%%%
%
%                            Raccourcis
%
%%%%%%%%%%%%%%%%%%%%%%%%%%%%%%%%%%%%%%%%%%%%%%%%%%%%%%%%%%%%%%%%%%%%%%%%%%%%%%%%%%%%%%%%%%%%%%%%%%%%%%%%%%%%%%%%%%%

\makeatletter
\edef\@tempa#1#2{\def#1{\mathaccent\string"\noexpand\accentclass@#2 }}
\@tempa\rond{017}
\makeatother

\newcommand{\es}{\emptyset}
\renewcommand{\phi}{\varphi} 
\newcommand{\m} {^{-1}} 
\newcommand {\cale} {{\mathcal {E}}}   
\newcommand {\calp} {{\mathcal {P}}}   
\newcommand{\Out} {\mathop{\mathrm{Out}}}
\newcommand{\Aut} {\mathop{\mathrm{Aut}}}
\newcommand {\Z} {{\mathbb {Z}}}
\newcommand {\Q} {{\mathbb {Q}}}
\newcommand {\R} {{\mathbb {R}}}
\newcommand{\inc}{\subset}
\newcommand{\bo}{\partial}
\newcommand{\rk}{\mathop{\mathrm{rk}}}
\newcommand{\ov}{\overline }

%%%%%%%%%%%%%%%%%%%%%%%%%%%%%%%%%%%%%%%%%%%%%%%%%%%%%%%%%%%%%%%%%%%%%%%%%%%%%%%%%%%%%%%%%%%%%%%%%%%%%%%%%%%%%
%
%              fin du preambule
%
%%%%%%%%%%%%%%%%%%%%%%%%%%%%%%%%%%%%%%%%%%%%%%%%%%%%%%%%%%%%%%%%%%%%%%%%%%%%%%%%%%%%%%%%%%%%%%%%%%%%%%%%%%%%%

\usepackage{pdfsync}

\begin{document}

\title{Generalized Baumslag-Solitar groups: rank and finite index subgroups}
\author{ Gilbert Levitt}
\date{}

\maketitle

\begin{abstract}
A generalized Baumslag-Solitar (GBS) group is a finitely generated group    acting on a tree  with infinite cyclic edge and vertex stabilizers. We show how to  
determine effectively the rank (minimal cardinality of a generating set) of a GBS group; as a consequence,  one can compute the rank of the mapping torus of a finite order outer automorphism of a free group $F_n$. We also show that the rank of a finite index subgroup of a GBS group $G$ cannot be smaller than the rank of $G$. We determine which GBS groups are large (some finite index subgroup maps onto $F_2$), and we solve   the commensurability problem (deciding whether two groups have isomorphic finite index subgroups) in a particular family of GBS groups.
\end{abstract} 

\section{Introduction and statement of results}

This paper
 studies  
 generalized Baumslag-Solitar (GBS) groups. These are finitely generated groups $G$ acting on a tree $T$ with infinite cyclic edge and vertex stabilizers (equivalently, fundamental groups of finite graphs of groups $\Gamma$ with all vertex and edge groups $\Z$). Basic examples are provided by the Baumslag-Solitar groups $BS(m,n)=\langle a,t\mid ta^mt\m=a^n\rangle$, known for sometimes being non-Hopfian \cite{ BS, CL}, and one-relator groups with non-trivial center \cite{Pi}. 

GBS groups have been studied in particular in relation with JSJ decompositions \cite{FoJSJ},  quasi-isometries \cite{Wh}, automorphisms \cite{Cl,Le}, cohomological dimension \cite{Kr}, the Haagerup property \cite{CV}, Bredon cohomology \cite{DP}, one-relator groups \cite{Co,Mc,Pi}. Solving the conjugacy problem in GBS groups is possible but non-trivial \cite{BeC}, while the isomorphism problem for GBS groups is open in general (see \cite{ CFIso,FoGBS,Le} for partial results).

Our first result is a computation of the rank (minimal cardinality of a generating set). The rank is a basic invariant of groups, still there are very few families of groups for which the rank is known (and the rank of a (Gromov)-hyperbolic group is not even computable \cite{Sh}). It is therefore somewhat surprising that there is a complete and explicit way  of computing the rank $\rk(G)$ of a GBS group $G$. 

A GBS group has standard generating sets, with one generator $a_v$ per vertex of the graph of groups $\Gamma$, and one stable letter $t_\varepsilon$ for each edge outside of a given maximal subtree. It turns out that the rank is achieved by a subset of any standard generating set, and that there is a simple combinatorial criterion to decide which subsets of a standard set are themselves generating sets. 

The basic notion involved in this criterion is that of  a \emph{plateau}, which we shall   explain after considering two  simple examples (see also Definition \ref{plato}).  

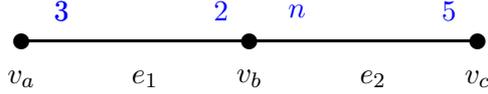
\begin{figure}[h]
\setlength{\unitlength}{1mm}
\begin{picture}(60,20)(-5,9)
   \multiput(40,16)(30,0){3}{\circle*{2} }
  \thicklines
 \put(40,16) 
  {\line(1,0){60}}
    \put(40,11) {\makebox(0,0)
    {$v_a$} }
    \put(70,11){\makebox(0,0){$v_b$}}
    \put(100,11){\makebox(0,0){$v_c$}}
{ \color{blue}
 \put(44,20){\makebox(0,0){$3$}}
  \put(65,20){\makebox(0,0){$2$}}
 \put(75,20){\makebox(0,0){$n$}}
 \put(95,20){\makebox(0,0){$5$}}
 \put(44,20){\makebox(0,0){$3$}}
 }
 {
 \put(55,11){\makebox(0,0){$e_1$}}
  \put(85,11){\makebox(0,0){$e_2$}}
}
  \end{picture}
  \caption{$G_1$ has rank 2 if $n$ is odd, 3 if $n$ is even. 
$\{v_a\}$ is a 3-plateau, $\{v_b\}$ is a 2-plateau for $n$ even,   $\{v_c\}$ is a 5-plateau,  $e_1$ is a  $p$-plateau if $p>3$ divides $n$, $e_2$ is a 2-plateau if $n$ is odd, and $\Gamma$ is a $p$-plateau for  $p>5$   not dividing $n$. If $n$ is odd, every plateau meets  $\{v_a,v_c\}$, so $G_1=\langle a, c \rangle$.
} \label{court}
\end{figure}

Figure \ref{court} represents the labelled graph $\Gamma$ associated to $G_1=\langle a,b,c\mid a^3=b^2, b^n=c^5\rangle$ 
(the graph is labelled  by 3, 2, $n$, 5, which are  indices of edge groups in vertex groups). If $n$ is even, $G_1$ has rank 3 because adding the relation $b^2=1$ maps $G_1$ onto $\Z/3\Z*\Z/2\Z*\Z/5\Z$, which has   rank 3 by Grushko's theorem. On the other hand, if $n$ is odd, $G_1=\langle a, c \rangle$ has rank 2. 

\begin{figure}[h]
\setlength{\unitlength}{1mm}
\begin{picture}(60,20)(10,5)
  \multiput(40,15)(90,0){2}{\circle*{2}}
  \multiput(70,15)(30,0){2}{\red\circle*{2}}
  \thicklines
   \put(40,15) 
  {\line(1,0){29}}
 \put(70,15) 
   {\red\line(1,0){30}}
 \put(101,15) 
  {\line(1,0){30}}
   \put(40,10) {\makebox(0,0)
    {$v_a$} }
    \put(70,10){\makebox(0,0){$v_b$}}
    \put(100,10){\makebox(0,0){$v_c$}}
      \put(130,10){\makebox(0,0){$v_d$}}
{\blue
 \put(44,20){\makebox(0,0){$3$}}
  \put(65,20){\makebox(0,0){\red$2$}}
 \put(75,20){\makebox(0,0){$3$}}
 \put(95,20){\makebox(0,0){$7$}}
 \put(44,20){\makebox(0,0){$3$}}
  \put(105,20){\makebox(0,0){\red$10$}}
 \put(125,20){\makebox(0,0){$5$}}
}
 \put(55,10){\makebox(0,0){$e_1$}}
  \put(85,10){\makebox(0,0){\red$e_2$}}
 \put(115,10){\makebox(0,0){$e_3$}}
\end{picture}
\setlength\belowcaptionskip{-0.3cm}
\caption{$\{v_a\}$ is a 3-plateau, $\{v_d\}$ is a 5-plateau, {\red $e_2$} is a 2-plateau, $e_1\cup e_2$ and $\{v_d\}$ are 5-plateaux, $e_3$ is a 7-plateau, and $\Gamma$ is a $p$-plateau for $p>7$. Every plateau meets $\{v_a,v_b,v_d\}$ and $\{v_a,v_c,v_d\}$, so $G_2=\langle a,b,d\rangle=\langle a,c,d\rangle$.
}\label{long}
\end{figure}
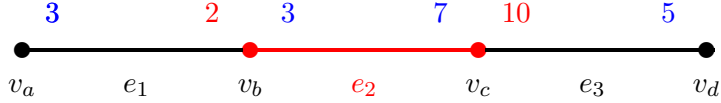

The group $G_2=\langle a,b,c,d\mid a^3=b^2, b^3=c^7, c^{10}=d^5\rangle=\langle a,b,d\rangle=\langle a,c,d\rangle$ of Figure \ref{long} has rank 3: adding the relations $b^2=c^2=1$  maps $G_2$ onto $\Z/3\Z*\langle b,c\mid b^3=c^7, b^2=c^2=1\rangle*\Z/5\Z$, which has rank 3 because the middle group is $\Z/2\Z$. 

This $\Z/2\Z$ is associated to the edge $e_2$, which we  call a \emph{2-plateau}: 2 is a prime, $e_2$ is a connected non-empty subgraph $P$, the labels  carried by edges in $P$    (3 and 7) are not divisible by 2, but to exit the plateau one must pass by a label (2 or 10) which is divisible by 2 (see Figures \ref{court} and \ref{long} for examples, and Definition \ref{plato} for a precise definition).  A plateau $P$ is \emph{proper} if $P\ne\Gamma$.

We will show that one obtains a generating set of minimal cardinality $\rk(G)$ by removing   certain generators $a_v$ from  any standard generating set $\{(a_v),(t_\varepsilon)\}$. The set remains a  generating set provided one keeps  at least one $v$ in each plateau (see Figures \ref{court} and \ref{long}). Thus:

\begin{thm}[Theorem \ref{rang}] \label{main1}
 Let $G$ be a GBS group represented by a labelled graph $\Gamma$. The rank of $G$ is $\beta(\Gamma)+\mu(\Gamma)$, where $\beta(\Gamma)$ is the first Betti number and $\mu(\Gamma)$ is the minimal cardinality of a set of vertices meeting every plateau.
\end{thm}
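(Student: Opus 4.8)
The plan is to prove the two inequalities $\rk(G)\le\beta(\Gamma)+\mu(\Gamma)$ and $\rk(G)\ge\beta(\Gamma)+\mu(\Gamma)$ separately. Recall that a standard generating set consists of the vertex generators $a_v$ together with one stable letter $t_\varepsilon$ for each of the $\beta(\Gamma)$ edges outside a fixed maximal subtree. For the upper bound I would prove the cleaner statement that, for a set $S$ of vertices, the subset $\{a_v:v\in S\}\cup\{(t_\varepsilon)\}$ generates $G$ if and only if $S$ meets every plateau. Taking $S$ to be a minimal hitting set, so that $|S|=\mu(\Gamma)$, then produces a generating set of size $\beta(\Gamma)+\mu(\Gamma)$, giving $\rk(G)\le\beta(\Gamma)+\mu(\Gamma)$.

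To prove that $S$ meeting every plateau forces $H:=\langle a_v\ (v\in S),\,(t_\varepsilon)\rangle$ to equal $G$, I would argue by coprimality propagation. Each edge joining a vertex $w$ to a vertex $v$ with $a_v\in H$ yields $a_w^{\ell}\in H$, where $\ell$ is the label carried at $w$; hence if the labels obtained this way have gcd $1$, then $a_w\in H$. Iterating, one obtains a set $K\supseteq S$ of vertices $v$ with $a_v\in H$ that is closed under this rule, and the task reduces to the combinatorial claim that if $K\ne V$ then $V\setminus K$ contains a plateau, contradicting that $S\subseteq K$ meets every plateau. Here the precise form of Definition \ref{plato} is essential: a $p$-plateau is essentially a connected component of the subgraph spanned by the edges whose labels are prime to $p$, all of whose exit labels are divisible by $p$, and the point is that a region out of which propagation cannot escape is exactly of this shape for a suitable prime $p$.

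The lower bound is the heart of the matter. The basic tool is the family of quotients obtained by killing $p$-th powers: imposing $a_v^p=1$ for every vertex $v$ of a $p$-plateau $P$ collapses $P$ to a single $\Z/p$ (the internal labels being prime to $p$ force all the $a_v$ with $v\in P$ to generate one common $\Z/p$) and trivializes every edge leaving $P$ (its exit label being divisible by $p$), so that this $\Z/p$ becomes a free factor. Performing this for a family of pairwise disjoint plateaus and collapsing the remaining vertex generators produces a free product of cyclic groups together with $\beta(\Gamma)$ free factors coming from the loops, whose rank is computed by Grushko's theorem.

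The main obstacle is that this construction only yields $\rk(G)\ge\beta(\Gamma)+\nu(\Gamma)$, where $\nu(\Gamma)$ is the maximal number of pairwise disjoint plateaus, and $\nu(\Gamma)$ can be strictly smaller than the covering number $\mu(\Gamma)$: plateaus associated to different primes may overlap, and a single vertex generator cannot be killed to two distinct prime orders at once without becoming trivial. Already a triangle of three copies of $\Z$, with labels arranged so that each pair of vertices is a plateau for a distinct prime, has $\nu=1$ but $\mu=2$, so no single free-product quotient detects the rank. To recover the full value $\mu(\Gamma)$ I would therefore proceed by induction on $\Gamma$ rather than through one global quotient: isolate a single plateau, trivialize one of its exit edges to split off a $\Z/p$ or to exhibit $G$ as an amalgam or HNN extension, over a cyclic subgroup, of a smaller GBS group $G'$ with $\mu(\Gamma')=\mu(\Gamma)-1$ and controlled $\beta$, and then invoke a Grushko/Nielsen-type lower bound for ranks of fundamental groups of graphs of groups to conclude $\rk(G)\ge\rk(G')+1$. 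The delicate point at each step is to verify that the relevant pieces are freely indecomposable and that the associated edge groups are not free factors, so that the stable letter (or amalgamation) genuinely forces one extra generator and no folding can lower the rank; this is exactly what prevents overlapping plateaus from dropping the rank below $\beta(\Gamma)+\mu(\Gamma)$, and it is where I expect the real work to lie.
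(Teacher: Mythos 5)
Your upper bound is essentially the paper's route (Proposition \ref{menage}): keep all stable letters, keep $a_v$ for $v$ in a hitting set, and show this generates iff the set meets every plateau. One caveat: the ``coprimality propagation'' as you state it (adjoin $w$ to $K$ when the labels at $w$ of edges into $K$ have gcd $1$) is too coarse, and the reduction ``if $K\ne V$ then $V\setminus K$ contains a plateau'' fails for that closure --- partial information of the form $a_w^n\in H$ with $n>1$ must itself be propagated (e.g.\ a path $v_a\,\hbox{--}\,v_b\,\hbox{--}\,v_c\,\hbox{--}\,v_d$ with labels $2,3$ at $v_b$ and $5,7$ at $v_c$, generated by $a$ and $d$, has no plateau missing $\{a,d\}$, yet your rule stalls at $K=\{a,d\}$). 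The paper instead tracks the full index $n_v$ defined by $H\cap\langle a_v\rangle=\langle a_v^{n_v}\rangle$ and builds the plateau from the vertices where the $p$-adic valuation of $n_v$ is maximal. This is fixable and is the same idea carried out more carefully.

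The genuine gap is in the lower bound. You correctly observe that killing $p$-th powers over pairwise disjoint plateaux only yields $\rk(G)\ge\beta+\nu$ with $\nu\le\mu$, but your proposed repair --- peel off one plateau at a time, write $G$ as an amalgam or HNN extension over a cyclic group of a smaller GBS group $G'$, and invoke ``a Grushko/Nielsen-type lower bound'' to get $\rk(G)\ge\rk(G')+1$ --- rests on a theorem that does not exist in the generality you need. Rank is notoriously badly behaved under one-edge splittings over infinite cyclic subgroups; there is no general inequality of the form $\rk(A*_C B)\ge\rk(A)+\rk(B)-\rk(C)$ or $\rk(A*_C)\ge\rk(A)+1$ when $C$ is infinite cyclic, and the verification you defer (``no folding can lower the rank'') is essentially equivalent to the theorem being proved. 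The paper's actual strategy is different and has two parts. First (Corollary \ref{rencplat}), for a \emph{minimally hyperbolic} generating set $S$ --- one with exactly $\beta(\Gamma)$ hyperbolic elements --- each \emph{single}-plateau quotient $G'$ has rank $\beta+1$ (Lemma \ref{grq}), so at least one elliptic generator of $S$ must survive in $G'$; hence the fixed set of vertices $\{v_s\}$ attached to the elliptic elements of $S$ meets \emph{every} plateau, giving $|S|\ge\beta+\mu$ with no disjointness hypothesis and no need to combine quotients. Second, and this is where the real work lies, Proposition \ref{minhyp} shows that some generating set of cardinality $\rk(G)$ \emph{is} minimally hyperbolic; this is proved by running Dunwoody's folding sequence from the Cayley tree of a free group of rank $\rk(G)$ down to the Bass--Serre tree of $\Gamma$, and inducting on the rank along the intermediate free products of GBS groups, with a careful accounting of how type III folds trade a stable letter for an elliptic element. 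Your proposal contains no substitute for this second step, which is precisely what the paper flags as unavoidable (``Grushko's theorem is not sufficient \dots we have to use Dunwoody's folding sequences'' for the example of Figure \ref{cerp}).
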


On Figure \ref{cerp}, $G=\langle a,b,c,t\mid a^3=b^5, b^2=c^3, tc^5t\m=a^2\rangle=\langle a,b ,t \rangle=\langle a, c,t \rangle=\langle b,c,t \rangle$. There are four  plateaux: $\Gamma$ and each of the   edges, so $\mu(\Gamma)=2$ and $\rk(G)=1+2=3$ by the theorem. In this example Grushko's theorem is not sufficient to prove $\rk(G)\ge3$, and we have to use Dunwoody's folding sequences \cite{Du}.

\psset{unit=1pt}
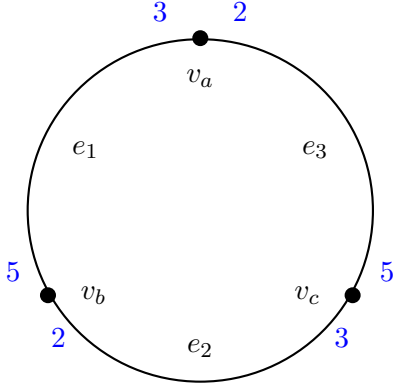
\begin{figure}[h]
\begin{center}
\begin{pspicture}(100,140) 
   \pscircle( 50,50) {65}
    \pscircle*( 50,115) {3}
    \pscircle*( 107,18) {3}
    \pscircle*( -7,18) {3}
{\blue
 \rput(-3,2) {$ 2$}
  \rput(-20,27) {$ 5$}
   \rput(103,2) {$ 3$}
    \rput(120,27) {$ 5$}
  \rput(65,125) {$ 2$}
   \rput(35,125) {$ 3$}
   }   
   \rput(50,100) {$ v_a$} 
      \rput(10,18) {$ v_b$} 
         \rput(90,18) {$ v_c$} 
         \rput(7,73) {$e_1$} 
    \rput(50,-2) {$e_2$} 
     \rput(93,73) {$e_3$}     
\end{pspicture}
\end{center}
\setlength\belowcaptionskip{-0.3cm}
\setlength\abovecaptionskip{0.8cm}
\caption{ $e_1$ is a\ 2-plateau, $e_2$ is a 5-plateau, $e_3$ is a 3-plateau, $\Gamma$ is a $p$ plateau for $p>5$, and $G= \langle a,b ,t \rangle=\langle a, c,t \rangle=\langle b,c,t \rangle$.
} \label{cerp}
\end{figure}

\begin{cor}[Corollary \ref{rmt}]
One may compute the rank of the mapping torus of a finite order outer automorphism of a finitely generated free group $F_n$.
\end{cor}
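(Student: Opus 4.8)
The plan is to show that the mapping torus $M_\phi=F_n\rtimes_\phi\Z$ is itself a GBS group, to produce effectively a labelled graph $\Gamma$ representing it, and then to read off $\rk(M_\phi)=\beta(\Gamma)+\mu(\Gamma)$ from Theorem \ref{main1}. Since $\beta(\Gamma)$ is immediate and computing $\mu(\Gamma)$ is a finite combinatorial search (only the primes dividing the labels, together with the generic prime for which $\Gamma$ itself is a plateau, produce finitely many plateaux, after which one minimizes the size of a set of vertices meeting all of them), everything reduces to exhibiting such a $\Gamma$ algorithmically.

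First I would realize $\phi$ geometrically. By Nielsen realization for free groups (Culler, Khramtsov, Zimmermann), the finite cyclic group $\langle\phi\rangle$ acts simplicially on a finite graph $X$ with $\pi_1(X)\cong F_n$, by a finite-order automorphism $f$ inducing $\phi$ on $\pi_1$. After a barycentric subdivision I may assume the action is without inversions. The essential point is that finite order is exactly what is needed for the next step: it forces every $f$-orbit of cells to be finite.

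Next I would pass to the mapping torus $\hat X=X\times[0,1]/(x,1)\sim(f(x),0)$, an aspherical space with $\pi_1(\hat X)\cong M_\phi$, and read a graph-of-groups decomposition off the projection $\hat X\to S^1$. The suspension of a single vertex orbit of size $d_v$ is a circle $C_v$, and that of an edge orbit of size $d_e$ is an annulus retracting to a circle; all these pieces have fundamental group $\Z$. Assembling them according to the incidence in $X/\langle\phi\rangle$ yields a finite graph of groups $\Gamma$ with every vertex and edge group infinite cyclic, the label at an endpoint $v$ of an edge $e$ being the index $d_e/d_v$ (a positive integer, since $d_v\mid d_e$). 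Thus $M_\phi$ is a GBS group, and $\Gamma$, together with its labels, is computed directly from the orbit sizes of the action of $f$.

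The hard part will be the first step, the \emph{effective} realization of $\phi$: one must produce the pair $(X,f)$ algorithmically. I would do this by a bounded search — there is an a priori bound on the complexity of a graph carrying a finite-order realization, so one enumerates finite graphs $X$ with $\pi_1(X)\cong F_n$ and order-$k$ simplicial automorphisms $f$, and for each tests whether the induced outer automorphism equals $\phi$ (a decidable comparison in $\Out(F_n)$); Nielsen realization guarantees the search terminates successfully. The remaining bookkeeping — that the pieces really have infinite cyclic fundamental group and that the gluings produce the stated labels — is where one must be careful to subdivide away inversions and to use finite order to ensure each vertex suspension closes up into a genuine circle rather than a line.
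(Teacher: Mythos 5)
Your proposal is correct and follows essentially the same route as the paper: Proposition \ref{susp} establishes that $M_\Phi$ is a GBS group via exactly this realization of $\Phi$ on a finite graph (citing Culler, Khramtsov, Zimmermann) and the circles-and-annuli mapping torus, with the labels read off from orbit sizes as in Example \ref{tet}, after which Theorem \ref{rang} gives the rank. The only point where you add machinery is the effectivity of the realization, which you handle by a search guaranteed to terminate; the paper sidesteps this by noting that a standard GBS presentation can alternatively be extracted by Tietze transformations from the semidirect product presentation.
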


Indeed, these mapping tori are exactly the GBS groups with a non-trivial center (see Proposition \ref{susp}).
 
 Using Theorem \ref{main1}, we can show:
 
\begin{thm}[Theorem \ref{indfi}]\label{main2}
 If $\ov G$ is a finite index subgroup of a GBS group $G$, then $\rk(\ov G)\ge\rk(G)$.
\end{thm}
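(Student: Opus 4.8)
The plan is to use the rank formula of Theorem \ref{main1} for both groups and reduce the statement to a purely combinatorial inequality about a covering of labelled graphs. First I would observe that $\ov G$, being of finite index in $G$, acts on the same Bass--Serre tree $T$ with vertex and edge stabilizers $\ov G\cap G_{\tilde v}$ and $\ov G\cap G_{\tilde e}$; these are of finite index in the infinite cyclic groups $G_{\tilde v},G_{\tilde e}$, hence again infinite cyclic. Thus $\ov G$ is a GBS group, represented by the finite labelled graph $\ov\Gamma=\ov G\backslash T$, and the inclusion $\ov G\inc G$ induces a graph morphism $\pi\colon\ov\Gamma\to\Gamma$. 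Writing $n=[G:\ov G]$, I would record the two basic features of this covering: the local degrees $d_{\bar v}=[G_v:\ov G_{\bar v}]$ and $d_{\bar e}=[G_e:\ov G_{\bar e}]$ satisfy $\sum_{\bar v\mapsto v}d_{\bar v}=\sum_{\bar e\mapsto e}d_{\bar e}=n$; and, computing inside $G_v\cong\Z$, the label of an edge transforms by $\ov\lambda=\lambda/\gcd(\lambda,d_{\bar v})$, with $d_{\bar e}=d_{\bar v}/\gcd(\lambda,d_{\bar v})$. By Theorem \ref{main1} it then suffices to prove the combinatorial inequality $\beta(\ov\Gamma)+\mu(\ov\Gamma)\ge\beta(\Gamma)+\mu(\Gamma)$.

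Next I would unwind the two ingredients. For a fixed prime $p$, a $p$-plateau (Definition \ref{plato}) is a connected component of the subgraph $\Lambda_p$ spanned by the edges both of whose labels are prime to $p$, with the extra requirement that every edge leaving it carries a label divisible by $p$ on its near end. In particular distinct $p$-plateaux are vertex-disjoint, so the analogous quantity for a single prime is just the number of $p$-plateaux, while $\mu(\Gamma)$ itself is the minimal number of vertices meeting all these disjoint families taken over all primes at once. On the other side, an Euler characteristic count gives $\beta(\ov\Gamma)=n(\beta(\Gamma)-1)+1+D_V-D_E$, where $D_V=\sum_{\bar v}(d_{\bar v}-1)$ and $D_E=\sum_{\bar e}(d_{\bar e}-1)$ record the ramification of $\pi$.

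The heart of the argument is a compensation lemma: $\beta(\ov\Gamma)-\beta(\Gamma)\ge\mu(\Gamma)-\mu(\ov\Gamma)$. To prove it I would start from a minimal set $\ov S$ of vertices meeting every plateau of $\ov\Gamma$ and try to turn $\pi(\ov S)$ into a hitting set for $\Gamma$. The key point, visible from the label law $v_p(\ov\lambda)=\max(0,v_p(\lambda)-v_p(d_{\bar v}))$, is that lifting can only lower $p$-valuations: a $p$-light edge stays light, while a $p$-heavy exit edge becomes light upstairs precisely when the local degree absorbs its $p$-valuation. Hence the only plateaux $P$ of $\Gamma$ that $\pi(\ov S)$ can fail to meet are those whose preimage contains no full plateau of $\ov\Gamma$, i.e. those that have merged with a neighbour upstairs because a bounding label was eaten by ramification. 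For each missed $P$ I would add one of its vertices to repair $\pi(\ov S)$, and then charge that repair to an independent cycle of $\ov\Gamma$: a merge across an edge $e$ forces $v_p(d_{\bar v})\ge v_p(\lambda)>0$ at the relevant ends, hence $d_{\bar v}>d_{\bar e}$ there, which is exactly the ramification that the formula for $\beta(\ov\Gamma)$ converts into extra first homology. The prototype is the index-two cover of $\langle a,b\mid a^2=b^2\rangle$, where the single edge of labels $(2,2)$ lifts to a bigon of labels $(1,1)$: the two $2$-plateaux $\{v_a\},\{v_b\}$ merge into one, $\mu$ drops from $2$ to $1$, and $\beta$ rises from $0$ to $1$, in exact balance.

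The step I expect to be the main obstacle is this charging, carried out uniformly over all primes at once. For a single prime the bookkeeping is a clean Euler characteristic identity on the light subgraph $\Lambda_p$, but $\mu$ is a joint hitting number for the disjoint plateau families of \emph{all} primes simultaneously, so I must produce a single injection from the plateaux repaired (across every prime) into a set of independent cycles of $\ov\Gamma$, and check that the cycles charged to different primes may be taken independent. Controlling this interaction between primes, rather than the per-prime count, is where the real work lies; once the injection is in place, combining it with the Euler characteristic formula and Theorem \ref{main1} yields $\rk(\ov G)=\beta(\ov\Gamma)+\mu(\ov\Gamma)\ge\beta(\Gamma)+\mu(\Gamma)=\rk(G)$.
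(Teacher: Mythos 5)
Your reduction is exactly the paper's: represent $\ov G$ by the quotient graph $\ov\Gamma=T/\ov G$ with the induced morphism $\pi:\ov\Gamma\to\Gamma$ and the local transformation law $\ov\lambda=\lambda/(\lambda\wedge m_x)$ (condition $(*)$ of Lemma \ref{loc}), then invoke Theorem \ref{rang} to reduce everything to the combinatorial inequality $\beta(\ov\Gamma)+\mu(\ov\Gamma)\ge\beta(\Gamma)+\mu(\Gamma)$ (Proposition \ref{keyg}). Your Euler characteristic bookkeeping and the observation that a plateau of $\Gamma$ can be ``missed'' by $\pi(\ov S)$ only when no component of its preimage is a plateau (Lemma \ref{uf} in the paper) are also correct, and your bigon example is precisely Figure \ref{disp} in miniature. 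But the proof stops where the actual work begins: the ``compensation lemma'' is asserted, not proved, and you say so yourself. Announcing that one must ``produce a single injection from the repaired plateaux into a set of independent cycles'' and that ``controlling this interaction between primes is where the real work lies'' is a statement of the problem, not a solution.

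Moreover, you have misplaced the difficulty. The interaction between distinct primes at a vertex is the easy part: if $n$ plateaux for distinct \emph{odd} primes are charged to the same boundary edge, the number of lifts of that edge is divisible by the product of those primes, hence is at least $2n+1$, and the valence count $2\Delta_v\ge d_x-d_v$ absorbs all of them at once (Lemma \ref{delt}). The genuine obstruction is the prime $2$, for two reasons the proposal never confronts: (i) a $2$-totally-unfolded plateau only forces $d_x\ge d_v+1$, so the per-vertex estimate gives $2\Delta_v\ge 2n_v-1$ rather than $\Delta_v\ge n_v$; and (ii) a terminal vertex all of whose preimages have valence $2$ contributes $\Delta_v=-\tfrac12$, so the inequality $\beta+t+c\le\ov\beta+\ov t$ can actually \emph{fail}. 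It fails exactly for the ``exceptional'' maps (accordions and generalized branched $2$-coverings of trees), and the paper needs a separate delicate induction on the number of edges (Lemmas \ref{bt} and \ref{deltf}), followed by a direct argument showing $\mu<c+|A|+t$ in the exceptional cases, to close the gap. None of this is recoverable from a generic ``charge each merge to an extra cycle'' scheme, because in the exceptional cases there is no extra cycle to charge: the deficit must instead be recovered from the hitting-set count $\mu$. As written, the proposal proves the theorem only under the additional hypothesis that $\Gamma$ has no proper $2$-plateau (the paper's Corollary \ref{cassim}).
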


This property holds in free groups and surface groups, but not in arbitrary hyperbolic groups. As pointed out by I.\ Kapovich, the Rips construction \cite{Ri} (applied to finite groups) yields hyperbolic groups of arbitrarily large rank containing a 2-generated subgroup of finite index.

A group   $\ov G$ as in the theorem is represented by a labelled graph $\ov \Gamma$, with a map $\pi:\ov\Gamma\to \Gamma$ sending edge to edge and vertex to vertex. It is easy to show $\beta(\ov\Gamma)\ge \beta(\Gamma)$. If some component of $\pi\m(P)$ is a plateau whenever $P\inc\Gamma$ is a plateau,   the result  is clear.   Unfortunately this is not true in general, so that $\mu(\ov\Gamma)$ may be smaller than $\mu(\Gamma)$, and we   have to show that this is compensated for by an increase in $\beta$. This is not too hard to do when $\Gamma$ contains no proper 2-plateau, but 2-plateaux bring technical complications. See for instance Figure  \ref{disp} for a subgroup  of index 2 of
 $G=\langle a,b ,t\mid a^2=b^2,  tb^3t\m=b^5\rangle$.  The graph $\Gamma$ representing $G$
 contains  two proper 2-plateaux (the terminal vertex and the ellipse), but   $\ov\Gamma$ contains no proper plateau.
  
\begin{figure}[h]
\begin{center}
\begin{pspicture}(100,200)
   \rput(0,20){
\psellipse(120,0)(60,30)
     \pscircle*( 60,0) {3}
    \pscircle*( -60,0) {3}
  \psline(-60,0)(60,0)
 \rput(200,0){$ \Gamma$}   
 {\blue
   \rput(75,10) {$ 5$}
   \rput(75,-10) {$ 3$}
    \rput(45,10) {$ 2$}
      \rput(-45,10) {$ 2$}
   }   
   }
      \psline[arrowsize=5pt] {->}(60,120)(60,60)
   \rput(67,90){$\pi$}
    \rput(0,160){
   \psellipse(0,0)(60,30)
\psellipse(120,0)(60,30) 
    \pscircle*( 60,0) {3}
    \pscircle*( -60,0) {3}
 \rput(200,0){$\ov\Gamma$}    
{\blue
  \rput(75,10) {$ 5$}
   \rput(75,-10) {$ 3$}
    \rput(45,10) {$ 1$}
  \rput(45,-10) {$ 1$}
      \rput(-65,10) {$ 1$}
  \rput(-65,-10) {$ 1$}
   }}
   \end{pspicture}
\end{center}
\caption{}
\label{disp}
\end{figure}
 
 If $\Gamma$ represents $G$, and $\pi: \Gamma'\to\Gamma$ is a finite covering, one may lift the labels from $\Gamma$ to $ \Gamma'$, and $ \Gamma'$ represents a finite index subgroup of $G$. When $\Gamma$ contains no proper plateau, every finite index subgroup of $G$ is isomorphic to one obtained by this construction (Proposition \ref{revet}).  On the other hand, when $\Gamma$ contains a proper $p$-plateau $P$, there are  other finite index subgroups, obtained  by taking a   covering of degree $p$ branched over $P$  (on Figure  \ref{disp}, the branching is over the union of two 2-plateaux). 
 As an application, we identify which GBS groups are large (i.e.\ they have a finite index subgroup mapping onto a free group $F_2$):
 
 \begin{thm}[Proposition \ref{larg}]\label{lar}
A non-cyclic GBS group $G$ is large if and only if  it cannot be represented by a labelled graph homeomorphic to a circle and containing no proper plateau. 
\end{thm}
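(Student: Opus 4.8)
The plan is to prove the two implications separately, using two elementary observations. First, killing all vertex groups (sending every $a_v$ to $1$) maps $G$ onto the free group $F_{\beta(\Gamma)}$, so $G$ is large as soon as it admits a representation with $\beta(\Gamma)\ge 2$. Second, by Theorem \ref{rang} the rank of a GBS group is $\beta+\mu$; hence a group represented by a circle with no proper plateau has rank $\beta+\mu=1+1=2$, the only plateau being $\Gamma$ itself. I will combine these with Proposition \ref{revet} and the branched covering construction described before Figure \ref{disp}.

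Suppose first that $G$ is represented by a circle $\Gamma$ with no proper plateau; I want to show $G$ is not large. By Proposition \ref{revet} every finite index subgroup $\ov G$ is represented by a finite covering $\Gamma'\to\Gamma$ carrying the lifted labels. A connected covering of a circle is again a circle, and I will check that no proper plateau can appear upstairs: a short proper $p$-plateau of $\Gamma'$ projects injectively to a proper $p$-plateau of $\Gamma$, while a long one forces all labels of $\Gamma$ to be prime to $p$, leaving no exit label divisible by $p$. Thus $\Gamma'$ is again a circle without proper plateau, and $\rk(\ov G)=2$ by Theorem \ref{rang}. Were $G$ large, some $\ov G$ would surject onto $F_2$, and the preimage of a subgroup $F_r\inc F_2$ of large rank would be a finite index subgroup of $G$ of rank $\ge r$, contradicting the uniform bound $2$. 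Hence $G$ is not large.

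Conversely, assume $G$ is not representable by a circle without proper plateau; I want to produce a large finite index subgroup. If some representation has $\beta(\Gamma)\ge 2$ we are done by the first observation, so fix a reduced representation $\Gamma$ with $\beta(\Gamma)\le 1$. A reduced graph without proper plateau has no valence one vertex (a label $\ge 2$ at a leaf is a plateau, a label $1$ would allow a collapse), so when $\beta=1$ it is a circle; by hypothesis such a $\Gamma$ must therefore carry a proper plateau. When $\beta(\Gamma)=1$ this plateau can be taken contractible — an arc of the circle, or a leaf of a tree hanging from it — and the degree $p$ covering branched over it yields $\ov\Gamma$ with $\chi(\ov\Gamma)=p\,\chi(\Gamma)-(p-1)\chi(P)$, so $\beta(\ov\Gamma)=p\,\beta(\Gamma)-(p-1)\beta(P)=p\ge 2$. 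The corresponding finite index subgroup then surjects onto $F_p$, and $G$ is large.

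It remains to treat the case where every reduced representation is a tree ($\beta=0$), which is the delicate point: branching over a contractible plateau leaves $\beta=0$, and the map onto $F_\beta$ is useless. Here $G$ is unimodular (a tree carries no essential loop, so the modular homomorphism is trivial), hence it is known to contain a finite index subgroup isomorphic to $F_n\times\Z$; if $n\ge 2$ this subgroup, and so $G$, is large. If $n\le 1$ then $G$ is virtually $\Z^2$, and being torsion free it is $\Z^2$ or the Klein bottle group, both represented by a loop with labels $1$, i.e. by a circle with no proper plateau — contradicting the hypothesis. I expect the main work to be exactly this bookkeeping: verifying that (branched) coverings interact with plateaux as claimed, that a contractible proper plateau is always available when $\beta=1$, and that the non-large groups arising in the unimodular case are precisely those excluded by the statement.
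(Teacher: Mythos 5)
Your proof is correct and follows essentially the same route as the paper: kill the vertex generators to surject onto $F_{\beta(\Gamma)}$ when $\beta\ge2$, use unimodularity for trees, take a branched covering over a contractible proper plateau to raise $\beta$ when $\beta=1$, and invoke Proposition \ref{revet} to see that a plateau-free circle only admits circle coverings. The one divergence is in the non-largeness direction, where the paper concludes immediately from the fact that a GBS group represented by $\Gamma$ surjects onto $F_2$ iff $\beta(\Gamma)\ge2$ (so $\beta=1$ upstairs already suffices), whereas you detour through Theorem \ref{rang} and a uniform rank bound on finite index subgroups, which obliges you to also check that topological coverings of a plateau-free circle remain plateau-free --- a correct but avoidable extra verification.
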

 
 When $\Gamma$ is a circle, with labels denoted $x_i$ and $y_i$ alternatively,  
 the absence of a proper plateau is equivalent to $\prod x_i$ being coprime with $\prod y_i$.
  Theorem \ref{lar} was proved independently by T.\ Mecham \cite{Me}; see \cite{EP} for the case of Baumslag-Solitar groups. We   show in \cite{Les} that \emph{$G$ is large if and only if it is not a quotient of $BS(m,n)$ with $m,n$ coprime}.
  
There may be infinitely  many different labelled graphs $\Gamma$ representing a given GBS group $G$ (this is why the isomorphism problem is difficult). Uniqueness (up to sign changes and restricting to reduced graphs) holds  when $\Gamma$ is strongly slide-free \cite{Fodef}: no two labels near a given vertex divide each other.   Using this, we show:
 
  \begin{thm}[Theorem \ref{co}] \label{com}  The commensurability problem is solvable among 
  GBS groups  represented by labelled graphs which are strongly slide-free and contain no proper plateau. 
  \end{thm}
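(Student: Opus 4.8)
The plan is to reduce the commensurability problem to a graph-covering problem and then to solve the latter by a labelled version of Leighton's graph-covering theorem. Suppose $G_1,G_2$ are represented by strongly slide-free graphs $\Gamma_1,\Gamma_2$ with no proper plateau, taken reduced. Since there is no proper plateau, Proposition \ref{revet} identifies the finite index subgroups of each $G_i$ with the finite topological coverings of $\Gamma_i$ (labels being lifted). I would then prove that $G_1$ and $G_2$ are commensurable if and only if $\Gamma_1$ and $\Gamma_2$ admit a \emph{common} finite labelled cover. For the nontrivial direction, if $\bar G\le G_1$ and $\bar H\le G_2$ are isomorphic finite index subgroups, they are represented by coverings $\Gamma_{\bar G}\to\Gamma_1$ and $\Gamma_{\bar H}\to\Gamma_2$; a covering preserves the multiset of labels around each vertex, so $\Gamma_{\bar G}$ and $\Gamma_{\bar H}$ are again strongly slide-free and reduced, and the uniqueness theorem of \cite{Fodef} forces $\Gamma_{\bar G}\cong\Gamma_{\bar H}$ up to admissible sign changes. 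This common graph then labelled-covers both $\Gamma_1$ and $\Gamma_2$. The converse is immediate, since $\pi_1$ of a common cover sits with finite index in both $G_1$ and $G_2$.

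Next I would characterise when two finite labelled graphs possess a common finite labelled cover. Only finitely many labels occur in $\Gamma_1,\Gamma_2$, so one may treat them as a finite alphabet and appeal to a labelled (and signed) version of Leighton's theorem: two finite labelled graphs have a common finite cover if and only if their labelled universal covers are isomorphic as labelled trees, up to the admissible sign changes of \cite{Fodef}. The labelled universal cover here is the topological universal cover of $\Gamma_i$ with the labels lifted. The "only if" direction is formal; the "if" direction is the substantive input, generalising Leighton's classical construction for uncoloured graphs to the present arithmetic, sign-ambiguous setting.

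It then remains to show that isomorphism of labelled universal covers is decidable. The labelled universal cover of a finite graph is determined by a finite amount of local data: one iterates the refinement of the vertex set by labelled-star type (the neighbourhood/colour-refinement procedure), which stabilises after finitely many steps and encodes the isomorphism type of the universal cover up to sign changes. Comparing the stabilised invariants of $\Gamma_1$ and $\Gamma_2$ therefore decides whether their universal covers are isomorphic, hence by the labelled Leighton theorem whether a common finite cover exists, hence whether $G_1$ and $G_2$ are commensurable. In the circle case of Theorem \ref{lar} the universal cover is a labelled line, and this procedure degenerates to detecting a common primitive period of the two cyclic label sequences (up to reflection and sign).

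The main obstacle I expect is the labelled Leighton theorem itself: one must show that isomorphic labelled universal covers produce a genuine common finite cover while carrying the arithmetic labels and the sign indeterminacy of \cite{Fodef} coherently through the construction; Dunwoody-style folding \cite{Du} may be useful in controlling the intermediate graphs. A secondary but necessary technical point is the careful verification that finite coverings preserve both strong slide-freeness and reducedness, so that \cite{Fodef} genuinely applies to the intermediate finite index subgroups produced in the reduction step.
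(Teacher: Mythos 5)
Your reduction of commensurability to the existence of a common finite labelled covering of $\Gamma_1$ and $\Gamma_2$ is exactly the paper's: Proposition \ref{revet} and Corollary \ref{cov} identify finite index subgroups with topological (label-preserving) coverings, such coverings preserve strong slide-freeness and minimality, Forester's uniqueness theorem \cite{Fodef} forces the two graphs representing a common subgroup to coincide, and the converse is Assertion 2 of Lemma \ref{adm}. Where you genuinely diverge is in how decidability of the common-cover problem is obtained. The paper does not pass through universal covers: it uses the fact that Neumann's account \cite{Ne} of Leighton's theorem \cite{Lei} is constructive, extracts from it an explicit bound $M$ on the number of edges of some common cover whenever one exists, and then decides by finite inspection. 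You instead invoke the equivalence ``common finite cover $\Leftrightarrow$ isomorphic labelled universal covers'' (whose nontrivial direction \emph{is} the labelled Leighton theorem) and propose to decide universal-cover isomorphism by iterated labelled degree refinement. Both routes are valid; yours trades the effective bound for the classical decidability of universal-cover isomorphism via a stabilising refinement, but it relies on the same substantive input (a coloured Leighton theorem, which \cite{Ne} supplies), so nothing is saved on that front, and the refinement argument would need to be written out for labelled graphs. Two points the paper handles that you should incorporate: rather than carrying the admissible sign indeterminacy through the covering arguments, it first replaces each $G_i$ by an index-2 subgroup if necessary so that $\Delta_{G_i}$ is positive and all labels can be taken positive (Forester's uniqueness is only up to sign changes, and a ``common cover up to sign changes'' statement is awkward to feed into Leighton); and it subdivides edges carrying the same label at both ends so that the automorphism group of the universal cover acts without inversions. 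Dunwoody folding plays no role in this proof.
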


 The commensurability problem consists in deciding whether two given groups contain isomorphic finite index subgroups. Among   Baumslag-Solitar groups $BS(m,n)$, it is  solved  when $m$ and $n$  are coprime  \cite{Wh},  or equal in absolute value,   but the general case is open. Note that   all    GBS groups which are virtually $F_n\times \Z$ for some $n $ are commensurable (provided $n>1$); these groups are called unimodular (see Section \ref{prel}), in particular every GBS group with non-trivial center is unimodular.
 
Theorem \ref {com} applies in particular to the non-large groups of Theorem \ref{lar}. We show that any GBS group has a finite index subgroup represented by a labelled graph with no proper plateau (Proposition \ref{papla}), but unfortunately this graph is not strongly slide-free in general. 

Deciding commensurability in Theorem \ref{com} amounts to deciding whether the labelled graphs  
have a common finite covering; Leighton's graph covering theorem \cite{Lei, Ne} ensures that this is possible. 
 
I am grateful to M.\ Forester for explaining how to visualize finite index subgroups, to R.\ Weidmann  for suggesting the use of folding sequences,  and to W.\   Dicks  for mentioning Leighton's theorem and the reference \cite{Ne}.
 This work was partly supported by 
ANR-07-BLAN-0141-01 and ANR-2010-BLAN-116-03.  
 
\section{Preliminaries} \label{prel}

We refer to \cite{FoJSJ,FoGBS,Le} for basic facts about GBS groups.

GBS groups are represented by labelled graphs. 
A \emph{labelled graph} is a finite   graph $\Gamma$ where each oriented edge $e$ has a label $\lambda_e$, a nonzero integer (possibly negative). We denote by $V$ the set of vertices of $\Gamma$, and by $\cale$ the set of non-oriented edges. We view a non-oriented edge as $\varepsilon=(e,\tilde e)$, where $\tilde e$ is $e$ with the opposite orientation. We denote by $v=o(e)$ the origin of $e$, and by $E_v$ the set of oriented edges with origin $v$. The cardinality $ | E_v | $ of $E_v$ is the \emph{valence} of $v$. A vertex is \emph{terminal} if it has valence one. We say that $\lambda_e$ is the \emph{label of $e$ near the vertex $o(e)$}, and that $\lambda_e$ is a label \emph{carried by $e$ or $\varepsilon$}. There are $ | E_v | $ labels near a vertex $v$. 

We write $m\wedge n$ for the greatest common divisor (gcd) of two integers, with the convention that $m\wedge n>0$ regardless of the sign of $m,n$. On the other hand, $lcm(m,n):=\frac{mn}{m\wedge n}$ may be negative.
 
A connected labelled graph defines a graph of groups. All edge and vertex groups are $\Z$, and the inclusion from the edge group $G_e$ to the vertex group $G_{o(e)}$ is multiplication by $\lambda_e$. The fundamental group $G$ of the graph of groups  is the \emph{GBS group represented by $\Gamma$} (we do not always assume that $\Gamma$ is connected, but we implicitly do so  whenever we refer to the group it represents). The group $G$  may be presented as follows. 

Choose a maximal subtree $\Gamma_0\inc \Gamma$. There is one generator $a_v$ for each vertex $v\in V$, and one generator $t_\varepsilon$ (stable letter) for each $\varepsilon$ in $\cale_0$, the set of 
non-oriented edges  
not in $\Gamma_0$. Each non-oriented edge $\varepsilon=(e,\tilde e)$ of $\Gamma$ contributes a relation $R_\varepsilon$. The relation is $(a_{o(e)})^{\lambda_e} = (a_{o(\tilde e)})^{\lambda_{\tilde e}}$ if $\varepsilon$ is   in $\Gamma_0$, and $t_\varepsilon (a_{o(e)})^{\lambda_e}t_\varepsilon\m = (a_{o(\tilde e)})^{\lambda_{\tilde e}}$ if $\varepsilon$ is  not  in $\Gamma_0$ (replacing $e$ by $\tilde e$ amounts to replacing $t_\varepsilon$ by its inverse). This will be called a \emph{standard presentation} of $G$, and the generating set  will be called a \emph{standard generating set} (associated to $\Gamma$ and $\Gamma_0$).

A GBS group is \emph{elementary} if it is isomorphic to $\Z$, or $\Z^2$, or the Klein bottle group $K=\langle x,y\mid x^2=y^2\rangle = \langle a,t\mid tat\m=a\m\rangle$. These are the only virtually abelian GBS groups, and they have very special properties. Unless mentioned otherwise, our results apply to all GBS groups, but 
 we do not always provide proofs for elementary groups. 

A   labelled graph $\Gamma$ is \emph{minimal} if its Bass-Serre tree $T$  contains no proper $G$-invariant subtree; this is equivalent to no label near a terminal vertex being equal  to $\pm1$. The graph $\Gamma$ is \emph{reduced} \cite{Fodef} if any edge $e$ such that $\lambda_e=\pm1$ is a loop ($e$ and $\tilde e$ have the same origin). Any labelled graph may be made reduced by a sequence of elementary collapses (see  \cite{Fodef});  
these collapses do not change $G$. 

The group $G$ represented by $\Gamma$ does not change if one changes the sign of all labels near a given vertex $v$, or if one changes the sign of the labels $\lambda_e$, $\lambda_{\tilde e}$ carried by a given non-oriented edge $\varepsilon$. These will be called \emph{admissible sign changes}. 

 In general, there may be infinitely many reduced labelled graphs representing a given $G$.
  One case when uniqueness holds (up to admissible sign changes) is when $\Gamma$ is \emph{strongly slide-free}  (\cite{Fodef}, see also \cite{Gu}): if $e$ and $e'$ are edges with the same origin, $\lambda_e$ does not divide $\lambda_{e'}$.

We denote by $\beta(\Gamma)$ the first Betti number of $\Gamma$. If $G$ is non-elementary, all labelled graphs representing 
$G$ have the same $\beta$, and we sometimes denote it by $\beta(G)$.
 
Let $G$ be represented by $\Gamma$. 
An element $g\in G$ is \emph{elliptic} if it fixes a point in the Bass-Serre tree $T$ associated to $\Gamma$, or equivalently if some conjugate of $g$ belongs to a vertex group of $\Gamma$. All elliptic elements are pairwise  \emph{commensurable} (they have a common power). An element which is not elliptic is \emph{hyperbolic}. If $G$ is non-elementary, ellipticity or hyperbolicity does not depend on the choice of $\Gamma$ representing $G$.

The quotient of $G$ by the subgroup generated by all elliptic elements may be identified with the topological fundamental group $\pi_1^{top}(\Gamma)$ of the graph $\Gamma$, a free group of rank $\beta(\Gamma)$.   
In particular, any generating set of $G$ contains at least $\beta(\Gamma)$ hyperbolic elements. 

If $G$ is non-elementary, there is a \emph{modular homomorphism} $\Delta_G:G\to\Q^*$ associated to $G$. It may be characterized as follows: given any non-trivial elliptic element $a$, there is a non-trivial relation $ga^m g\m=a^n$,
and
$\Delta_G(g)=\frac mn$ (the numbers $m,n$  may depend on the choice of $a$, but $m/n$ does not).

Elliptic elements have modulus 1, so $\Delta_G$ factors through $\pi_1^{top}(\Gamma)$ for any $\Gamma$ representing $G$. One may define $\Delta_G$ directly in terms of loops in $\Gamma$: if $\gamma\in\pi_1^{top}(\Gamma)$ is represented by an edge-loop $(e_1,\dots,e_m)$, then $\Delta_G(\gamma)=\prod\frac{\lambda_{e_i}}{\lambda_{\tilde e_i}}$. Note that $\Delta_G$ is trivial if $\Gamma$ is a tree.  

$\Delta_G$ is trivial if and only if the center of $G$ is non-trivial. In this case the center is   cyclic and only contains  elliptic elements (see \cite{Le}). Moreover, there is an epimorphism $G\to\Z$ whose kernel contains no non-trivial elliptic element (see Proposition 3.3 of \cite{Le}). 

A non-elementary $G$ is \emph{unimodular} if the image of $\Delta_G$ is contained in $\{1,-1\}$. This is equivalent to $G$ having a normal infinite cyclic subgroup, and also to $G$ being virtually $F_n\times\Z$ for some $n\ge2$. 

Given any $\Gamma$, one may perform
  admissible sign changes so that at most $\beta(\Gamma)$ labels are negative (edges in some maximal subtree only carry positive labels, edges not in the subtree carry at most one negative label). If $\Delta_G$ only takes positive values, one may make all labels positive. 

 \section{Computing the rank}\label{rg}
 
 Let $G$ be a GBS group. In this section we fix a (connected) labelled graph $\Gamma$ representing $G$.  We assume that it is finite, but not necessarily minimal or reduced. 

\begin{dfn} [plateau, plateaunic number]  \label{plato}
Let $p$ be a prime number. A non-empty connected subgraph $P\inc\Gamma$ is a \emph{$p$-plateau} if the following condition holds for every oriented edge $e$ with $v=o(e)$ belonging to $P$:
the label $\lambda_e$ of $e$ near $v$ is divisible by $p$ if and only if $e$ is not contained in $P$.  We say that $P$ is a \emph{plateau} if it is a $p$-plateau for some $p$. It is \emph{proper} if $P\ne\Gamma$.

The \emph{plateaunic number} $\mu(\Gamma)$ is the minimal cardinality of a set of vertices meeting every plateau.
\end{dfn}

Let $P$ be a $p$-plateau. Given $e=vw$, there are four possibilities.
If $e\inc P$, none of the labels $\lambda_e,\lambda_{\tilde e}$ is divisible by $p$. 
If $v,w\in P$ but $e$ is not contained in $P$, both labels $\lambda_e,\lambda_{\tilde e}$ are divisible by $p$. 
If  
 $v\in P$ and $w\notin P$ (or vice versa), the label of $e$ near $v$ is divisible by $p$. 
If $v,w\notin P$, there is no restriction. 

 If $v$ is a vertex, $\{v\}$ is a $p$-plateau if and only if $p$ divides every label near $v$. A terminal vertex with label $\ne\pm1$ is a plateau. Two $p$-plateaux are disjoint or equal. The whole graph $\Gamma$ is a $p$-plateau for all but finitely many $p$, so $\mu(\Gamma)\ge1$.
  
 \begin{thm} \label{rang}
 Let $G$ be a GBS group represented by a labelled graph $\Gamma$. 
 \begin{itemize}
 \item
 Every standard generating set contains a generating   set of cardinality $\rk(G)$.
 \item
 The rank of $G$ equals $\beta(\Gamma)+\mu(\Gamma)$, where $\beta(\Gamma)$ is the first Betti number of $\Gamma$ and $\mu(\Gamma)$ is its plateaunic number (see Definition \ref{plato}).
\end{itemize}
 \end{thm}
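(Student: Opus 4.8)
The plan is to establish the two inequalities $\rk(G)\le\beta(\Gamma)+\mu(\Gamma)$ and $\rk(G)\ge\beta(\Gamma)+\mu(\Gamma)$ separately; the first bullet will then follow, since the generating set realizing the upper bound is extracted from an \emph{arbitrary} standard generating set by deleting some of the $a_v$. Throughout I assume $G$ is non-elementary, so that $\beta$, ellipticity and $\Delta_G$ are intrinsic (the three elementary groups being treated by hand). The backbone is a combinatorial \emph{generation criterion}: for a subset $S\subseteq V$, the set $\{a_v : v\in S\}\cup\{t_\varepsilon : \varepsilon\in\cale_0\}$ generates $G$ if and only if $S$ meets every plateau. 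Granting this, choosing $S$ to be a minimal set of vertices meeting every plateau produces a generating set of cardinality $\beta(\Gamma)+\mu(\Gamma)$ inside any standard generating set, which yields the upper bound and, once the lower bound is known, the first bullet.

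For the upper bound I would prove the \emph{if} direction of the criterion. Put $H=\langle\{a_v : v\in S\},\{t_\varepsilon\}\rangle$ and $W=\{v\in V : a_v\in H\}$; since $H$ already contains every stable letter, it suffices to show $W=V$. The key observation is that powers of the $a_v$ propagate into $H$ along \emph{paths}: composing the edge relations $a_{o(e)}^{\lambda_e}=a_{o(\tilde e)}^{\lambda_{\tilde e}}$ (conjugating by the $t_\varepsilon$, which lie in $H$) along a path from a vertex $w$ to a vertex of $W$ forces a nonzero power $a_w^{N}\in H$, where $N$ is a product of labels read along the path. Hence $a_w\in H$ unless all such $N$ share a common prime factor $p$, and the definition of a $p$-plateau is designed so that this happens exactly when $w$ lies in a $p$-plateau disjoint from $W$: inside the plateau every label is prime to $p$, while any path leaving it must cross an edge whose near-label is divisible by $p$, forcing $p\mid N$. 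Thus if $W\ne V$ one can exhibit a plateau contained in $V\setminus W$, contradicting the fact that $S\subseteq W$ meets every plateau; so $W=V$ and $H=G$. The technical point to handle carefully is the passage from ``every path-product is divisible by $p$'' to an \emph{honest} plateau, i.e.\ choosing the prime $p$ (a divisor of the obstruction at $w$), taking the connected component of $w$ in the subgraph of edges both of whose labels are prime to $p$, and ruling out the mixed edges that would spoil the plateau condition.

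The lower bound $\rk(G)\ge\beta(\Gamma)+\mu(\Gamma)$ is the heart of the matter. The contribution of $\beta$ is cheap: the quotient map $G\to\pi_1^{top}(\Gamma)\cong F_{\beta(\Gamma)}$ shows that any generating set contains at least $\beta(\Gamma)$ hyperbolic elements. To recover the extra $\mu(\Gamma)$ I would invoke Dunwoody's folding sequences. A generating family of cardinality $r$ gives an epimorphism $F_r\twoheadrightarrow G$, hence an action of $F_r$ on the Bass--Serre tree $T$; realizing $F_r$ as the fundamental group of a rose and resolving the induced map of trees, one obtains a folding sequence interpolating between the rose and the graph of groups $\Gamma$. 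The plan is to exhibit a complexity, morally $\beta+\mu$, which is bounded above by $r$ at the rose and which cannot decrease along the folds and collapses of the sequence, so that $r\ge\beta(\Gamma)+\mu(\Gamma)$ at the end. I expect this to be the main obstacle: a fold can simultaneously lower $\beta$ and alter the elliptic part, so the argument must show that the plateaux are genuine, non-redundant obstructions that no sequence of folds can ``spread out'' among fewer vertex generators --- in effect a monotonicity statement marrying the first Betti number with the plateau combinatorics. This is precisely where Grushko's theorem alone is insufficient (as in Figure \ref{cerp}) and the finer folding technology is required.
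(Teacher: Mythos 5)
Your upper bound is essentially the paper's argument (Proposition \ref{menage}): given $G_1=\langle S_1\rangle\ne G$, one extracts a plateau disjoint from $V_1$ from the divisibility data of the indices $n_v$ defined by $G_1\cap\langle a_v\rangle=\langle a_v^{n_v}\rangle$. The technical point you flag is real but is resolved exactly along the lines you suspect, with one crucial refinement you do not state: the plateau is built not from all obstructed vertices but from those where the $p$-valuation of $n_v$ is \emph{maximal}; it is this maximality that forces the label at the far end of an edge leaving the candidate subgraph to be divisible by $p$, which is what the plateau condition requires. So that half is a sound, if incomplete, sketch of the paper's own route.

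The genuine gap is in the lower bound. You propose to define a complexity ``morally $\beta+\mu$'' that is monotone along a Dunwoody folding sequence, and you concede you cannot prove the monotonicity. No such direct monotonicity is what the paper proves, and folds really can destroy plateaux, so this is not a detail to be filled in later: as stated, the strategy has no working mechanism. The paper's route is different and rests on two ingredients absent from your proposal. First, a quotient argument (Lemma \ref{grq}): for a $p$-plateau $P$, adding the relations $a_v=1$ for $v\notin P$ and $a_v^p=1$ for $v\in P$ yields a quotient of rank exactly $\beta(\Gamma)+1$ (a graph of groups with all groups $\Z/p\Z$ over $P$, free-producted with a free group; Grushko). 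This is what turns each plateau into an honest obstruction: a generating set with only $\beta(\Gamma)$ hyperbolic elements must contain an elliptic element conjugate into a vertex group of $P$, so every \emph{minimally hyperbolic} generating set has at least $\beta+\mu$ elements (Corollary \ref{rencplat}). Second, the folding sequence is used not to transport a monotone quantity but to prove that some minimally hyperbolic generating set realizes $\rk(G)$ (Proposition \ref{minhyp}): one inducts on the rank, applies Grushko at the first stage where the intermediate group is a nontrivial free product of GBS groups, and observes that a type III fold, which drops the Betti number by one, simultaneously renders the corresponding stable letter elliptic, so the number of hyperbolic generators remains minimal throughout. Without the quotient lemma your plateaux never enter the lower bound at all, and without the reduction to minimally hyperbolic generating sets the folding machinery has nothing to act on.
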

 
 In particular, the rank of $G$ is computable from any labelled graph representing $G$. It follows from Proposition \ref{menage} below that one obtains  a generating set of 
 cardinality $\rk(G)$ by finding a  set of vertices $V_1\inc V$ of cardinality $\mu(\Gamma)$ meeting every plateau, and   deleting   generators $a_v$ for $v\notin V_1$ from any standard generating set $\{(a_v)_{v\in V}, (t_\varepsilon)_{ \varepsilon\in \cale_0}\}$. 
  
 \begin{rem}
  $\beta(\Gamma)+\mu(\Gamma)$ is an invariant: it only depends on $G$. For $G$ non-elementary  it is well-known that $\beta(\Gamma)$ is an invariant, and it is not hard to deduce directly from \cite{Fodef} or \cite{CFWh}  that $\mu(\Gamma)$ is also an invariant. 
\end{rem}

  The remainder of this section is devoted to the proof of  Theorem \ref{rang}, so we fix $G$ and $\Gamma$. 
  
Given a prime number $p$ and a $p$-plateau $P\inc\Gamma$, fix a maximal subtree $P_0\inc P$, and a maximal subtree $\Gamma_0\inc \Gamma$ with $\Gamma_0\cap P=P_0$. Consider the associated standard presentation, with generators $a_v$ and $t_\varepsilon$. We define a quotient $G'$ of $G$ by adding the relations $a_v=1$ for $v\notin P$, and $(a_v)^p=1$ for $v\in P$.

\begin{lem} \label{grq}
The rank of $G'$ is $\beta(\Gamma)+1$.
\end{lem}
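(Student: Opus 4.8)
The plan is to read a presentation of $G'$ off the standard presentation of $G$, recognize $G'$ as a free product, and then compute the rank of its one interesting free factor by hand.

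First I would run through the four types of edge $e=vw$ relative to $P$ and record what each relation $R_\varepsilon$ becomes after imposing $a_v=1$ for $v\notin P$ and $a_v^p=1$ for $v\in P$. If $v,w\notin P$ the relation is trivial; if exactly one endpoint, say $v$, lies in $P$, then the label of $e$ near $v$ is divisible by $p$, so $a_v^{\lambda_e}=1$ already and the relation is again trivial; if $v,w\in P$ but $e\not\inc P$, both labels are divisible by $p$ (and such an edge cannot lie in $\Gamma_0$, since its endpoints are already joined inside $P_0\inc\Gamma_0$), so the relation is once more trivial. Hence the only surviving relations come from edges contained in $P$. Consequently the stable letters $t_\varepsilon$ attached to non-tree edges outside $P$ occur in no relation and split off as a free factor: $G'=H*F$, where $F$ is free of rank $N:=\beta(\Gamma)-\beta(P)$ (the number of non-tree edges not in $P$) and $H$ is generated by the $a_v$ ($v\in P$) together with the stable letters of the non-tree edges of $P$.

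Next I would identify $H$. Since $P$ is a $p$-plateau, every label carried by an edge of $P$ is coprime to $p$, so raising to such a power is invertible modulo $p$. Using the tree $P_0$ and the relations $a_v^p=1$, I would eliminate every $a_v$ in favour of a single generator $a=a_{v_0}$ of order $p$, writing $a_v=a^{c_v}$ with $c_v$ coprime to $p$. Each non-tree edge $\varepsilon$ of $P$ then yields a relation $t_\varepsilon a^{\alpha_\varepsilon}t_\varepsilon\m=a^{\gamma_\varepsilon}$ with $\alpha_\varepsilon,\gamma_\varepsilon$ coprime to $p$, which (modulo $a^p=1$) is equivalent to $t_\varepsilon a t_\varepsilon\m=a^{k_\varepsilon}$ for $k_\varepsilon=\gamma_\varepsilon\alpha_\varepsilon\m \bmod p$. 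Thus $H\cong \Z/p\rtimes F_{\beta(P)}$, the free group on the $\beta(P)$ stable letters acting on $\Z/p$ through the characters $k_\varepsilon\in(\Z/p)^*$; in particular $a$ genuinely has order $p$, so $H$ contains $p$-torsion. The displayed generating set gives $\rk(H)\le\beta(P)+1$. For the reverse inequality, killing $a$ gives a surjection $\pi\colon H\to F_{\beta(P)}$ onto the free group on the stable letters; if $H=\langle g_1,\dots,g_{\beta(P)}\rangle$, then the $\pi(g_i)$ generate $F_{\beta(P)}$ and hence form a basis, so the map $F_{\beta(P)}\to H$ sending this basis to $(g_1,\dots,g_{\beta(P)})$ is a section of $\pi$, whose image is both free and all of $H$ — contradicting the $p$-torsion. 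Therefore $\rk(H)=\beta(P)+1$, and Grushko's theorem gives $\rk(G')=\rk(H)+N=\beta(\Gamma)+1$.

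The main obstacle is exactly this lower bound for $\rk(H)$. When the action is non-trivial the abelianization of $H$ is only $\Z^{\beta(P)}$ (the $p$-torsion dies), so counting generators via abelianization yields merely $\rk(H)\ge\beta(P)$. What makes the ``$+1$'' robust, independently of the action, is the observation that any $\beta(P)$ generators of $H$ must project to a \emph{basis} of $F_{\beta(P)}$ and hence generate a \emph{free} subgroup, which cannot be the whole of $H$. A secondary point requiring care is the bookkeeping in the case analysis, above all the verification that edges with both endpoints in $P$ yet not contained in $P$ are never tree edges, so that they contribute only free generators and the splitting $G'=H*F$ is genuine.
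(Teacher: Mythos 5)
Your proof is correct and follows essentially the same route as the paper's: kill the relations from edges not contained in $P$, split off a free factor of rank $\beta(\Gamma)-\beta(P)$ from the stable letters outside $P$, recognize the remaining factor as an extension of $F_{\beta(P)}$ by $\Z/p\Z$ (the paper phrases this as the fundamental group of a graph of $\Z/p\Z$'s over $P$), and conclude by Grushko. The only difference is that you spell out the lower bound $\rk(H)\ge\beta(P)+1$ via the Hopficity/section argument, a step the paper asserts without detail — a worthwhile addition, since abelianization alone would not suffice there.
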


\begin{proof}
 Consider an edge $\varepsilon=(e,\tilde e)$ not contained in $P$. The elements $(a_{o(e)})^{\lambda_e}$ and $(a_{o(\tilde e)})^{\lambda_{\tilde e}}$ are killed in $G'$ since $\lambda_e$ is divisible by $p$ if $o(e)\in P$, so  we may remove the relation $R_\varepsilon$ from the presentation of $G'$. 

For simplicity, we first consider the case when  $\Gamma$ is a tree. Then $G'$ is generated by the elements $a_v$ for $v\in P$, and $(a_v)^p=1$ in $G'$. The other relations come from edges of $P$, they are of the form $(a_{o(e)})^{\lambda_e}=(a_{o(\tilde e)})^{\lambda_{\tilde e}}$. The key point here  (as in the group $G_2$ of Figure \ref{long}) is that no exponent  $\lambda_e $ (or ${\lambda_{\tilde e}}$) is divisible by $p$, so multiplication by $\lambda_e$ defines an automorphism of $\Z/p\Z$. Thus $G'$ is the fundamental group of a graph of groups with underlying graph $P$, vertex and edge groups $\Z/p\Z$, and inclusions given by multiplication by the $\lambda_e$'s. It follows that $G'$ is isomorphic to $\Z/p\Z$, so has rank $1=\beta(\Gamma) +1$.

The general case is similar. Since $\Gamma_0\cap P=P_0$, there are $\beta(\Gamma)-\beta(P)$ edges not contained in either $P$ or $\Gamma_0$. The associated stable letters are not involved in any relation, so $G'$ is the free product of a free group of rank $\beta(\Gamma)-\beta(P)$ with   $G''=\langle (a_v)_{v\in P}, (t_\varepsilon)_{\varepsilon\inc \ov{P\setminus P_0}}\rangle$. As in the previous case, $G''$ is the fundamental group of a graph of $\Z/p\Z$'s based on $P$. It maps onto the topological fundamental group of $P$ with kernel $\Z/p\Z$, so has rank $\beta(P)+1$. By Grushko's theorem, $G'$ has rank $\beta(\Gamma)-\beta(P)+\beta(P)+1=\beta(\Gamma)+1$.
\end{proof}

\begin{dfn} [Minimally hyperbolic] \label{mh}
A generating set $S$ of $G$ is \emph{minimally hyperbolic} if it contains exactly $\beta(\Gamma)$ hyperbolic elements. 
\end{dfn}

Recall that any generating set contains \emph{at least} $\beta(\Gamma)$ hyperbolic elements. 
Standard generating sets are minimally hyperbolic.

\begin{cor} \label{rencplat}
 Let $S$ be a minimally hyperbolic generating set. For  each elliptic element $s\in S$, fix a vertex $v_s$ of $\Gamma$ such that $s$ has a conjugate contained in the vertex group $G_{v_s}$.
Then every plateau   contains some $v_s$. In particular, $S$ has at least $\beta(\Gamma)+\mu(\Gamma)$ elements. 
\end{cor}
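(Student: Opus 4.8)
The plan is to argue by contradiction using Lemma \ref{grq}. Suppose some plateau $P$ contains none of the chosen vertices $v_s$; say $P$ is a $p$-plateau. Form the quotient $G'$ of $G$ as in Lemma \ref{grq}, obtained by imposing $a_v=1$ for $v\notin P$ and $(a_v)^p=1$ for $v\in P$, so that $\rk(G')=\beta(\Gamma)+1$. Since $S$ generates $G$, the images of its elements generate $G'$.

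The key observation is that every elliptic $s\in S$ maps to the trivial element of $G'$. Indeed, $s$ has a conjugate in the vertex group $G_{v_s}=\langle a_{v_s}\rangle$, so $s=g(a_{v_s})^k g\m$ for some $g\in G$ and some integer $k$; as $v_s\notin P$ by assumption, the relation $a_{v_s}=1$ holds in $G'$, whence $s\mapsto 1$. Consequently $G'$ is generated by the images of the hyperbolic elements of $S$ alone. Because $S$ is minimally hyperbolic, there are exactly $\beta(\Gamma)$ of these, so $\rk(G')\le\beta(\Gamma)$, contradicting $\rk(G')=\beta(\Gamma)+1$. This proves that every plateau contains some $v_s$.

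For the final assertion I would simply count. The set $\{v_s : s\in S\text{ elliptic}\}$ meets every plateau, so by the definition of the plateaunic number it has cardinality at least $\mu(\Gamma)$. Since this set is the image of the elliptic elements of $S$ under $s\mapsto v_s$, the number of elliptic elements of $S$ is also at least $\mu(\Gamma)$. Adding the $\beta(\Gamma)$ hyperbolic elements gives $|S|\ge\beta(\Gamma)+\mu(\Gamma)$.

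I expect no serious obstacle here, since Lemma \ref{grq} supplies the only nontrivial input. The one point needing care is the claim that elliptic elements die in $G'$: this relies on the fact that an element conjugate into a vertex group $G_v$ is a conjugate of a power of the single generator $a_v$, together with the choice $v=v_s\notin P$. It is also worth noting that the relation $a_{v_s}=1$ is among those imposed in forming $G'$, independently of the choice of maximal subtrees $P_0\inc\Gamma_0$ used to write down the standard presentation.
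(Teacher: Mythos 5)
Your proof is correct and follows essentially the same route as the paper: form the quotient $G'$ of Lemma \ref{grq}, observe that all elliptic elements of $S$ die there because each $v_s$ lies outside $P$, and derive a contradiction with $\rk(G')=\beta(\Gamma)+1$ since the $\beta(\Gamma)$ hyperbolic elements would then generate $G'$. The counting argument for the final assertion is also the intended one.
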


\begin{proof} 
Let $P$ be a $p$-plateau, and let $G'$ be as above. If $P$ contains no $v_s$, every $a_{v_s}$ is killed in $G'$, so every 
 elliptic element  of $S$ is killed in $G'$. Since $S$ is minimally hyperbolic,  $G'$ may be generated by $\beta(\Gamma)$ elements. This contradicts Lemma \ref{grq}.
\end{proof}

The following fact is used in \cite{Les}.  

\begin{cor} \label{divp}
 Let $S$ be a minimally hyperbolic generating set. 
Given a $p$-plateau $P$,   there exist   $s\in S$ and $v \in P$ such that  $s$ is conjugate to a power   $(a_{v })^{q}$ with $q$ not divisible by $p$. 
\end{cor}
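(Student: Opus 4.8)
The plan is to build on Corollary \ref{rencplat} by tracking not just \emph{which} vertex $v_s$ an elliptic generator sits at, but the precise $p$-divisibility of the power of $a_{v_s}$ to which $s$ is conjugate. Fix the $p$-plateau $P$ and form the quotient $G'$ exactly as in Lemma \ref{grq}, killing $a_v$ for $v\notin P$ and imposing $(a_v)^p=1$ for $v\in P$; recall $G'$ has a free factor of rank $\beta(\Gamma)-\beta(P)$ together with a graph of $\Z/p\Z$'s based on $P$, and the rank of $G'$ is $\beta(\Gamma)+1$. The key observation is that an elliptic $s\in S$, conjugate to $(a_{v_s})^q$ with $v_s\in P$, survives in $G'$ as a non-trivial element of the $\Z/p\Z$-part precisely when $p\nmid q$; if instead $p\mid q$, then the image of $s$ in $G'$ is trivial.

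So suppose for contradiction that the conclusion fails for the given $P$: every $s\in S$ that is conjugate to a power of some $a_v$ with $v\in P$ has that exponent divisible by $p$. First I would note, as in Corollary \ref{rencplat}, that every generator $s\in S$ whose $v_s$ lies outside $P$ is already killed in $G'$. Combined with the contradiction hypothesis, this means every elliptic $s\in S$ maps to a power of $p$ (hence to a trivial element, since $(a_v)^p=1$ in $G'$) in the $\Z/p\Z$-factors of $G'$. Thus the image of $S$ in $G'$ consists only of the images of the $\beta(\Gamma)$ hyperbolic generators (using that $S$ is minimally hyperbolic). These hyperbolic elements map into the free factor and the stable-letter part, so their images generate $G'$; but that would give $G'$ a generating set of cardinality $\beta(\Gamma)$, contradicting $\rk(G')=\beta(\Gamma)+1$ from Lemma \ref{grq}.

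The main obstacle I expect is handling the bookkeeping around conjugacy and the possibility that a single generator $s$ is conjugate to powers of $a_v$ for \emph{several} vertices $v\in P$, or that $s$ is hyperbolic yet still contributes something to the elliptic part of $G'$. I would address this by working at the level of images in $G'$ rather than in $G$: the point is simply that, in $G'$, the subgroup generated by the elliptic images lies in the union of the $\Z/p\Z$ vertex groups, and an elliptic generator $s$ conjugate to $(a_v)^q$ has trivial image iff $p\mid q$. One should also check the degenerate configurations (for instance $P$ a single vertex, or $P=\Gamma$) separately, but these follow from the same divisibility computation. The crux is therefore the precise statement that \emph{survival of an elliptic generator in the $\Z/p\Z$-quotient is equivalent to non-divisibility of its exponent by $p$}, which is immediate from the graph-of-$\Z/p\Z$'s description established in the proof of Lemma \ref{grq}.
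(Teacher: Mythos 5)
Your proposal is correct and follows essentially the same route as the paper: both pass to the quotient $G'$ of Lemma \ref{grq}, observe that if no elliptic generator survived then the $\beta(\Gamma)$ hyperbolic elements of $S$ would generate $G'$, contradicting $\rk(G')=\beta(\Gamma)+1$, and then read off that survival of an elliptic $s$ conjugate to $(a_v)^q$ forces $v\in P$ and $p\nmid q$. The side worries you raise (multiple vertices, degenerate plateaux) are harmless, since only the existence of one suitable pair $(v,q)$ is needed.
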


\begin{proof}
As in the previous proof, some elliptic $s\in S$ must survive in $G'$. Being elliptic, $s$ is conjugate to some power $(a_ v  )^{q}$ with $v\in V$.  Survival  implies that $v$ is in $P$ and $q$ is not divisible by $p$.
\end{proof}

\begin{rem} \label{op}
  If $S$ is not minimally hyperbolic, let $h(S)$ be the number of hyperbolic elements in $S$. If $P_1,\dots,P_i,\dots P_k$  are  disjoint $p_i$-plateaux   with $k> h(S ) -\beta(\Gamma)$,  there exist $s\in S$, and $i$, such that $s$ is conjugate to $(a_v)^q$ with $v\in P_i$ and $q$ not divisible by $p_i$. To prove this, one defines $G'$ by adding relations $a_v=1$ for $v\notin \cup_i P_i$ and $(a_v)^{p_i}=1$ for $v\in P_i$. It has rank $\beta(\Gamma)+k$, so some elliptic $s\in S$ survives in $G'$. 
\end{rem}

\begin{prop} \label{menage}
 Let $S=\{(a_v)_{v\in V}, (t_\varepsilon)_{\varepsilon\in\cale_0}\}$ be a standard generating set. For $V_1\inc V$, define $S_1\inc S$ by  
 keeping only the $t_\varepsilon$'s and the 
 $a_v$'s  for $v\in V_1$. 
  Then $S_1$ generates $G$ if and only if $V_1$ meets every plateau.  
\end{prop}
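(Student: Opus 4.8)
The plan is to prove both implications of the equivalence ``$S_1$ generates $G$ if and only if $V_1$ meets every plateau'' separately. One direction is already essentially available: Corollary \ref{rencplat} shows that any minimally hyperbolic generating set must hit every plateau (via the chosen vertices $v_s$). Since a standard generating set is minimally hyperbolic and $S_1 \subset S$ retains all the stable letters $t_\varepsilon$, the subset $S_1$ is itself minimally hyperbolic whenever it generates; for the elliptic generators $a_v$ with $v \in V_1$ the natural choice is $v_s = v$ itself. Thus if $S_1$ generates, Corollary \ref{rencplat} forces $V_1$ to meet every plateau. This gives the ``only if'' direction almost for free, so I would dispatch it first in a sentence or two.

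The substance of the proposition is the ``if'' direction: assuming $V_1$ meets every plateau, I must show $S_1$ generates $G$. First I would reduce to understanding the subgroup $H = \langle S_1 \rangle$. The stable letters generate the full topological fundamental group, so $H$ surjects onto $\pi_1^{top}(\Gamma)$; equivalently $G = H \cdot N$, where $N$ is the (normal) subgroup generated by all elliptic elements. It therefore suffices to show $H \supseteq N$, i.e.\ that every $a_v$ for $v \notin V_1$ lies in $H$. Since all elliptic elements are commensurable, the real task is to show that the various $a_v$ are linked through $H$ strongly enough to recover the missing generators. The natural tool is the edge relations: an edge $e$ with both endpoints carrying labels coprime to each relevant prime lets one pass divisibility information between adjacent vertex generators, whereas an edge whose label is divisible by $p$ blocks this at the prime $p$. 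This is precisely the dichotomy encoded in the plateau condition.

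The cleanest formulation of the induction is the following ``saturation'' argument. I would work one prime at a time, or more precisely argue by contradiction: suppose $H \ne G$, so some $a_v \notin H$ for $v \notin V_1$. Then I would look at the quotient $G/\langle\langle H \rangle\rangle$ or, better, construct a concrete nontrivial quotient of $G$ in which all elements of $S_1$ die but some $a_v$ survives — exactly the kind of quotient $G'$ built before Lemma \ref{grq}, namely setting $a_w = 1$ outside a candidate plateau and $(a_w)^p = 1$ inside it. The point is that the set $W$ of vertices whose generators are \emph{not} recoverable from $S_1$ ought to be forced, by the edge relations, to be a union of plateaux: if $w \in W$ and $e = ww'$ is an edge with label near $w$ not divisible by the obstructing prime $p$, then the relation $(a_w)^{\lambda_e} = (a_{w'})^{\lambda_{\tilde e}}$ (or its HNN analogue, using a stable letter already in $S_1$) lets one recover $a_w$ modulo $p$-th powers from $a_{w'}$, propagating membership in $H$ across $e$. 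Making this propagation precise — so that the ``bad'' set is genuinely a plateau, which must then meet $V_1$ by hypothesis and yield a contradiction — is the heart of the matter.

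The main obstacle I anticipate is exactly this last bookkeeping: turning the qualitative statement ``divisibility information flows across non-plateau edges'' into a rigorous proof that the complement of the recoverable set is a disjoint union of plateaux. The difficulty is that recovering $a_w$ itself (not merely a power of it) requires controlling the gcd of all the exponents coming into $w$ from recoverable neighbours, and a single prime may remain obstructed while others are resolved. I would handle this by defining, for each vertex $v$, the set of primes $p$ for which $a_v$ is ``$p$-recoverable'' from $S_1$, showing this set is closed under the propagation across suitable edges, and identifying a maximal connected region that is $p$-blocked on all its boundary edges as a $p$-plateau. Because every plateau meets $V_1$ — and on a vertex of $V_1$ the generator $a_v$ is literally in $S_1$, hence recoverable at every prime — no such maximal blocked region can exist, forcing full recoverability and hence $H = G$. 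The elementary cases ($G \cong \Z, \Z^2, K$) I would dispose of by direct inspection, as is done elsewhere in the paper.
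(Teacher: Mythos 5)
Your overall strategy coincides with the paper's: the ``only if'' direction is exactly Corollary \ref{rencplat} applied to $S_1$ (which, if it generates, is minimally hyperbolic because its only hyperbolic elements are the $\beta(\Gamma)$ stable letters), and the ``if'' direction is proved contrapositively by showing that if $\langle S_1\rangle\ne G$ then some plateau misses $V_1$, with the plateau built out of vertices whose generators are obstructed at a fixed prime and with divisibility information propagated across edges by the edge relations. The difficulty you flag as ``the heart of the matter'' is, however, exactly where your outline has a genuine gap, and the repair you sketch would not work as stated.

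The right invariant is not a yes/no notion of ``$p$-recoverable'' but the integer $n_v\ge1$ defined by $\langle S_1\rangle\cap\langle a_v\rangle=\langle a_v^{n_v}\rangle$ (well defined since $V_1\ne\emptyset$ and all elliptic elements are commensurable). A connected component of the ``$p$-blocked'' set $\{v:p\mid n_v\}$ need not be a $p$-plateau: an edge joining $v$ to $w$ may carry a label coprime to $p$ near $v$ and divisible by $p$ near $w$, and no $p$-plateau can contain both endpoints of such an edge, even though $p$ may divide both $n_v$ and $n_w$ (to different exponents). The paper's proof resolves this by restricting to the vertices $v\notin V_1$ at which the exponent $\delta_v$ of $p$ in $n_v$ attains its \emph{maximum} $\delta$, joined by edges neither of whose labels is divisible by $p$. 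The propagation step is then a single computation: if $p\nmid\lambda_e$ at $v=o(e)$ with $\delta_v=\delta$, the smallest power of $a_v^{\lambda_e}$ lying in $\langle S_1\rangle$ is divisible by $p^{\delta}$, while the relation $R_\varepsilon$ together with $a_w^{n_w}\in\langle S_1\rangle$ and the stable letters bounds that power by $\theta=n_w/(n_w\wedge\lambda_{\tilde e})$; hence $p^{\delta}\mid n_w$, maximality forces $\delta_w=\delta$ exactly, and then $p^{\delta}\mid\theta$ forces $p\nmid\lambda_{\tilde e}$. It is this one estimate that delivers both halves of the plateau condition simultaneously, and neither the quantity $n_v$ nor the restriction to the maximal stratum appears in your plan. (Minor point: the detour through a quotient $G'$ as in Lemma \ref{grq} is not needed for this direction; the argument lives entirely inside the subgroup $\langle S_1\rangle$.)
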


\begin{proof} If $S_1$ generates, $V_1$ meets every plateau by Corollary  \ref{rencplat}.
We now suppose that  $G_1=\langle S_1\rangle$ is not $G$, and we construct a plateau $P$ disjoint from $V_1$. We may assume $V_1\ne \es$ and $V_1\ne\Gamma$.

For $v\in V $, define the integer $n_v\ge1$ by
$G_1\cap\langle a_v\rangle=\langle a_v^{n_v}\rangle$; it exists because $V_1\ne\es$ and   the $a_v$'s are all commensurable. Note that   $n_v=1$ if
$v\in V_1$.  Since $G_1\ne G$, we have $n_v>1$ for some $v\notin V_1$, so we may fix a prime $p$ dividing some $ n_v$. For each $v\notin V_1$, let $p^{\delta _v}$ be the maximal power   dividing $n_v$, and let $\delta \ge 1$ be the maximal value of $\delta _v$ (for $v\notin V_1$). 

We now define a subgraph $ \Gamma_1\inc \Gamma$ as follows. Its vertices are the vertices $v\notin V_1$ such that 
$\delta _v=\delta $. An edge $\varepsilon=(e,\tilde e)$ is in $\Gamma_1$ if and only if its vertices are in $\Gamma_1$ and none of $\lambda_e,\lambda_{\tilde e}$ is divisible by $p$. We complete the proof by showing that every component   of $\Gamma_1$ is a $p$-plateau.

This is equivalent to  the following fact:  if $e $ is an edge with $\delta _{o(e)}=\delta $ such that the label $\lambda_e$ of $e$ near $o(e)$ is not divisible by $p$, then $\delta _{o({\tilde e})}=\delta $ and the label $\lambda_{ {\tilde e} }$ near $ o(\tilde e)$ is not divisible by $p$. Let us write $v=o(e)$ and $w=o(\tilde e)$.

Let $a_e=(a_{v})^{\lambda_e}$. Since $p$ does not divide $\lambda_e$, and $\delta_v=\delta$, the smallest $n$ such that $(a_e)^n\in G_1$ is divisible by $p^\delta$.
The standard presentation of $G$ contains a relation $R_\varepsilon$ expressing $a_e$ as   $ (a_{w})^{\lambda_{\tilde e}}$ (possibly conjugated by a stable letter $t_\varepsilon$). 
Let $\displaystyle\theta= \frac {lcm(n_{w},\lambda _{ \tilde e })} {\lambda _{ \tilde e }}=\frac{n_w}{ n_w\wedge\lambda_{\tilde e} }
$.
Since $G_1$ contains $ (a_{w})^{n_{w}}$  and all stable letters,
it contains $(a_w)^{\lambda_{\tilde e}\theta}$ and $(a_e)^\theta$.
It follows that $p^\delta$ divides $\theta$, hence $n_w$, so $\delta_w=\delta$. Since $n_w$ is not divisible by $p^{\delta+1}$, we deduce that   $p$ does not divide $\lambda _{ \tilde e }$.
\end{proof}

Corollary \ref{rencplat} and Proposition \ref{menage} imply:

\begin{cor} \label{2prop}
\begin{itemize}
\item
Every standard generating set contains a generating set of cardinality  $\beta(\Gamma)+\mu(\Gamma)$.   In particular, $\beta(\Gamma)+\mu(\Gamma)\ge\rk(G)$. 
\item The minimal cardinality of a minimally hyperbolic generating set is $\beta(\Gamma)+\mu(\Gamma)$.  
\end{itemize}
\end{cor}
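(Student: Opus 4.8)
The plan is to read off both statements directly from Corollary \ref{rencplat} and Proposition \ref{menage}, which together pin down the cardinality $\beta(\Gamma)+\mu(\Gamma)$ from above and below. For the first bullet I would start from a standard generating set $S=\{(a_v)_{v\in V},(t_\varepsilon)_{\varepsilon\in\cale_0}\}$ and choose, by the very definition of $\mu(\Gamma)$, a set $V_1\inc V$ of cardinality $\mu(\Gamma)$ meeting every plateau. Proposition \ref{menage} then guarantees that the associated subset $S_1\inc S$ (keep every $t_\varepsilon$ and the $a_v$ for $v\in V_1$) still generates $G$. It remains to count: $S_1$ has $|V_1|=\mu(\Gamma)$ elliptic generators together with the stable letters, and since the number of edges outside a maximal subtree equals the first Betti number, $|\cale_0|=\beta(\Gamma)$. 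Hence $|S_1|=\beta(\Gamma)+\mu(\Gamma)$, and as $\rk(G)$ is by definition the minimal cardinality of a generating set, this yields $\beta(\Gamma)+\mu(\Gamma)\ge\rk(G)$.

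For the second bullet, the key observation is that the generating set $S_1$ just produced is itself minimally hyperbolic: it retains all $\beta(\Gamma)$ stable letters, which are hyperbolic, while the surviving $a_v$ are elliptic, so $S_1$ contains exactly $\beta(\Gamma)$ hyperbolic elements (Definition \ref{mh}). This exhibits a minimally hyperbolic generating set of cardinality $\beta(\Gamma)+\mu(\Gamma)$. Conversely, Corollary \ref{rencplat} says that any minimally hyperbolic generating set has at least $\beta(\Gamma)+\mu(\Gamma)$ elements. The two bounds coincide, so the minimum is exactly $\beta(\Gamma)+\mu(\Gamma)$.

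I do not expect any genuine obstacle here, since the corollary is really just the packaging of the two preceding results; the only things to check are the bookkeeping facts that $|\cale_0|=\beta(\Gamma)$ and that the hyperbolic elements of a standard set are precisely its stable letters, both of which are recorded in Section \ref{prel}. The single point requiring a moment's care is confirming that passing from $S$ to $S_1$ neither creates nor destroys hyperbolic generators, i.e.\ that minimal hyperbolicity survives the deletion of elliptic generators; this is immediate once one notes that the discarded $a_v$ are elliptic while the retained $t_\varepsilon$ are hyperbolic.
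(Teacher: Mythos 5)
Your proof is correct and follows exactly the paper's route: the paper derives this corollary directly from Corollary \ref{rencplat} and Proposition \ref{menage}, remarking only that standard generating sets are minimally hyperbolic, which is precisely the packaging you describe. The bookkeeping points you flag ($|\cale_0|=\beta(\Gamma)$, stable letters hyperbolic, the $a_v$ elliptic) are all as you say.
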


This is clear, recalling that standard generating sets are minimally hyperbolic. 
The following proposition now completes the proof of Theorem \ref{rang}.

\begin{prop} \label{minhyp}
There exists a minimally hyperbolic generating set of cardinality $\rk(G)$. 
\end{prop}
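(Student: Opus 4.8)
We must produce a \emph{minimally hyperbolic} generating set whose cardinality equals $\rk(G)$. By Corollary \ref{2prop}, any minimally hyperbolic generating set has at least $\beta(\Gamma)+\mu(\Gamma)$ elements, and we can always realize this value with a standard one; the content of Proposition \ref{minhyp} is therefore the reverse inequality $\rk(G)\ge\beta(\Gamma)+\mu(\Gamma)$, obtained by upgrading an \emph{arbitrary} minimal generating set $S$ (with $|S|=\rk(G)$) into a minimally hyperbolic one of the same size. Since the quotient $G/\langle\langle\text{elliptics}\rangle\rangle=\pi_1^{top}(\Gamma)$ is free of rank $\beta(\Gamma)$, any generating set has at least $\beta(\Gamma)$ hyperbolic elements, so the issue is only when $S$ has strictly more than $\beta(\Gamma)$ hyperbolic elements.

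\medskip
\textbf{The plan.} The plan is to start from a minimal generating set $S$ with $|S|=\rk(G)$ and reduce the number of hyperbolic elements one at a time, without increasing $|S|$, until exactly $\beta(\Gamma)$ remain. Suppose $S$ is not minimally hyperbolic, so $h(S)>\beta(\Gamma)$. First I would exploit Remark \ref{op}: since the plateaux of $\Gamma$ organize into finitely many maximal disjoint $p$-plateaux, and $h(S)-\beta(\Gamma)\ge 1$, I can find many disjoint plateaux, and for a suitable choice Remark \ref{op} hands me an element $s\in S$ conjugate to a power $(a_v)^q$ with $q$ coprime to the relevant prime. The strategy is to modify $S$ by replacing one hyperbolic generator by an elliptic one (or by absorbing a redundant hyperbolic element), using such an $s$ together with the free quotient $\pi_1^{top}(\Gamma)$ to detect which hyperbolic generators are superfluous. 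Concretely, if $S$ has more than $\beta(\Gamma)$ hyperbolic elements, their images in the free group $\pi_1^{top}(\Gamma)$ of rank $\beta(\Gamma)$ are linearly redundant, so one hyperbolic generator $h_0$ can be written, modulo the subgroup generated by the elliptics and the other hyperbolics, in terms of the rest; I then want to trade $h_0$ for an elliptic element and check that the result still generates.

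\medskip
\textbf{Key steps, in order.} (1) Reduce to showing: if $h(S)>\beta(\Gamma)$ then there is a generating set $S'$ with $|S'|=|S|$ and $h(S')=h(S)-1$. (2) Using that the images of $S$ in the free group $\pi_1^{top}(\Gamma)$ generate it, pick a hyperbolic $h_0\in S$ whose image is expressible via the images of $S\setminus\{h_0\}$; then $h_0$ equals a word $w$ in $S\setminus\{h_0\}$ times an elliptic correction $z$, i.e.\ $h_0=wz$ with $z$ elliptic and $w\in\langle S\setminus\{h_0\}\rangle$ mapping to the same image in $\pi_1^{top}(\Gamma)$. (3) Replace $h_0$ by $z$: the set $S'=(S\setminus\{h_0\})\cup\{z\}$ still generates $G$ because $h_0=wz\in\langle S'\rangle$, it has the same cardinality, and it has one fewer hyperbolic element. (4) Iterate until $h(S')=\beta(\Gamma)$; the resulting set is minimally hyperbolic of cardinality $\rk(G)$, and combined with Corollary \ref{2prop} this proves $\rk(G)=\beta(\Gamma)+\mu(\Gamma)$ and completes Theorem \ref{rang}.

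\medskip
\textbf{Main obstacle.} The delicate point is step (2)–(3): producing the elliptic correction $z$ and ensuring the swap preserves generation. Writing $h_0=wz$ requires that the discrepancy between $h_0$ and the word $w\in\langle S\setminus\{h_0\}\rangle$ realizing its class in $\pi_1^{top}(\Gamma)$ lands in the normal subgroup generated by elliptic elements, and one must pin down $z$ as a single honest elliptic \emph{group element} of $G$ (not merely a product of conjugates of elliptics) so that adjoining it genuinely recovers $h_0$. This is where the structure of $G$ as a graph of $\Z$'s, and the commensurability of all elliptic elements, must be used carefully; I expect the bookkeeping of how $z$ interacts with the already-chosen generators—especially across edges carrying labels divisible by the primes of the relevant plateaux—to be the crux, and Remark \ref{op} together with Corollary \ref{divp} is what will guarantee that the elliptic elements I introduce are the "right" ones, i.e.\ that their powers meeting $\langle S'\rangle$ still detect each plateau.
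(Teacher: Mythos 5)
Your plan founders at steps (2)--(3), and the gap is the entire content of the proposition. The kernel of $G\to\pi_1^{top}(\Gamma)$ is the \emph{normal closure} of the elliptic elements, not the set of elliptic elements, and it contains many hyperbolic elements (already in an amalgam $A*_CB$ a product of an element of $A$ with an element of $B$ is typically hyperbolic while having trivial image in the free quotient). So the correction $z=w^{-1}h_0$ you produce has no reason to be elliptic, and the swap $S'=(S\setminus\{h_0\})\cup\{z\}$ need not decrease the number of hyperbolic elements. You flag this yourself ("one must pin down $z$ as a single honest elliptic group element"), but you offer no mechanism for doing so, and none of Remark \ref{op} or Corollary \ref{divp} supplies one: those results constrain which elliptic elements a generating set must contain, they do not convert a product of conjugates of elliptics into a single elliptic. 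The paper explicitly warns (in the discussion of Figure \ref{cerp}) that quotient/Grushko-style arguments of this kind do not suffice and that Dunwoody's folding sequences are needed. There is also a secondary gap in step (2): a generating set of a free group of rank $\beta$ with more than $\beta$ elements need not contain a redundant element (e.g.\ $\Z=\langle 2,3\rangle$), so the hyperbolic $h_0$ whose image is expressible from the rest may not exist.

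The paper's actual proof is structurally different: it argues by induction on $r=\rk(G)$, realizes a rank-$r$ generating set as an epimorphism $F(X)\to G$, and interpolates a Dunwoody folding sequence $T_0\to T_1\to\dots\to T_n=T$ of trees with cyclic stabilizers. At the first stage where the intermediate group is a GBS group, the preceding group is a nontrivial free product of lower-rank GBS groups; the induction hypothesis and Grushko give each factor a minimally hyperbolic generating set, and one then pushes the resulting set forward through the remaining folds, checking that each basic fold of type III (which drops $\beta$ by one) simultaneously renders a stable letter elliptic, so minimal hyperbolicity is preserved at every step. Nothing in your proposal replaces this transport mechanism, so the argument as written does not close.
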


\begin{proof} The proof is by induction on $r=\rk(G)$, with the result clear for $r=1$. We fix a generating set $X$ of cardinality $r  $, which we view as an epimorphism $\pi$ from the free group $F(X)$ to $G$.  Let $T_0$ be the Cayley tree of  $F(X)$, with the natural action of $ F(X)$. 
Let $T$ be the Bass-Serre tree of $\Gamma$, with the natural action of $G$. Trimming $\Gamma$ if necessary, we may assume that $G$ acts minimally on $T$.
 
By Theorem 2.1 of \cite{Du}, there exists a sequence $T_0\overset{f_0}\to T_1\overset{f_1} \to\dots \to T_i \to \dots\overset{f_{n-1}}\to  T_n=T$ of simplicial trees $T_i$ with an action of a group $G_i$ such that:
\begin{enumerate}
\item
$G_0=F(X)$, $G_n=G$, and there are epimorphisms $\pi_i: G_i\to G_{i+1}$, with $\pi_{n-1}\circ\dots\circ\pi_0=\pi$; 
\item
 the restriction of $\pi_{n-1}\circ\dots\circ\pi_i$  to each vertex stabilizer of $T_i$ is injective; 
\item
the map $f_i:T_i\to T_{i+1}$ is equivariant with respect to $\pi_i$; it is either a subdivision or a basic fold followed by  a vertex morphism (see \cite{Du} for definitions).
\end{enumerate}

The action of $G_i$ on $T_i$ is not necessarily minimal. By condition 2 (and equivariance), all vertex and edge stabilizers are cyclic (possibly trivial). In particular, $G_i $ is a free product of GBS groups. 
Note that all groups $G_i$ have rank $r$. If a group acts on  a tree $T$, we say that a generating set is minimally $T$-hyperbolic if no other generating set contains fewer hyperbolic elements. 

Consider the smallest $k$ such that $G_{k+1}$ is a GBS group. 

$\bullet$ We first show that $G_{k+1}$ has a minimally $T_{k+1}$-hyperbolic generating set of cardinality $r$. 
 
 The previous group $G_k$ is a nontrivial free product of (possibly cyclic) GBS groups of rank $<r$. Indeed, define a graph of groups $\hat\Gamma_k$ by  collapsing to a point each edge  of $\Gamma_k= T_k/G_k$ with stabilizer $\Z$. We can write $G_k$ as the free product of GBS vertex groups $H_v$ of $\hat\Gamma_k$, together with  $\beta(\hat\Gamma_k)$ infinite cyclic groups  generated by stable letters. The free product is nontrivial because $G_k$ is not  a GBS group.
 
 By induction on $r$, each $H_v$ has a minimally $T_k$-hyperbolic generating set $S_v$ of cardinality $\rk(H_v)$. 
Consider a generating set $S_k$ of $G_k$  
obtained by adjoining $\beta(\hat\Gamma_k)$ stable letters to the union   of the $S_v$'s.
 By Grushko's theorem, it has $r$ elements. Since each $S_v$ is minimally $T_k$-hyperbolic, exactly $\beta(\Gamma_k)$ elements of $S_k$  are hyperbolic in $T_k$. 

If passing from $T_k$ to $T_{k+1}$ involves no basic fold of type III (see Figure 1 in [Du]), one has $\beta(\Gamma_{k+1})=\beta(\Gamma_k)$. Since $\pi_k(g)$ is elliptic in $T_{k+1}$ if $g$ is elliptic in $T_k$, the image of $S_k$ in $G_{k+1}$ is the desired  minimally $T_{k+1}$-hyperbolic generating set. If there is a fold of type III, so that $\beta(\Gamma_{k+1})=\beta(\Gamma_k)-1$,  
the stable letter associated to the folded loop  (the element $g$ in Figure 1 of  \cite{Du})  becomes elliptic in $T_{k+1}$. Once again $\pi_k(S_k)$ is a minimally $T_{k+1}$-hyperbolic generating set of $G_{k+1}$.

$\bullet$ We now show  by induction on $i\ge k+1$ that $G_i$ has a minimally $T_i$-hyperbolic generating set of cardinality $r$.   This proves the proposition since $G_n=G$. 

%The result is true for $i=k+1$. Assume it is true for some $i$. By Corollary \ref{2prop}, we may get a minimally $T_i$-hyperbolic generating set  $S_i$ of cardinality $r$ for $G_i$ by removing elliptic elements from any standard generating set. As above, the image of $S_i$ in $G_{i+1}$ is the desired generating set:
%  if there is a basic fold of type III, we have $\beta(\Gamma_{i+1})=\beta(\Gamma_i)-1$, but some stable letter in $S_i$ becomes elliptic in $T_{i+1}$. 
  
  The result is true for $i=k+1$. Assume it is true for some $i$. 
  %Let $S_By Corollary \ref{2prop}, we may get 
  Let $S_i$ be a minimally $T_i$-hyperbolic generating set of $G_i$  of cardinality $r$,  obtained by removing elliptic elements from any standard generating set (such an $S_i$  exists by the induction hypothesis and  Corollary \ref{2prop}). As above, the image of $S_i$ in $G_{i+1}$ is the desired generating set:
  if there is a basic fold of type III, we have $\beta(\Gamma_{i+1})=\beta(\Gamma_i)-1$, but the definition of $S_i$ guarantees that some stable letter in $S_i$ becomes elliptic in $T_{i+1}$. 

\end{proof}

\section{Finite order automorphisms of free groups}

We denote by $F_n$ the free group of rank $n$. Given $\Phi\in\Out(F_n)$, we denote by $M_\Phi$ the mapping torus $M_\Phi=F_n\rtimes_\Phi\Z=\langle F_n,t\mid tgt\m=\alpha(g)\rangle$, where $\alpha\in\Aut(F_n)$ is any representative of $\Phi$.

 \begin{prop} \label{susp}
Given a group $G$, the following are equivalent:
\begin{itemize}
\item $G$ is a GBS group with non-trivial center;
\item there exist $n$ and $\Phi\in\Out(F_n)$ of finite order such that $G=M_\Phi$.
\end{itemize}
\end{prop}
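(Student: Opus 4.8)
The plan is to prove the two implications separately, dispatching the elementary GBS groups ($\Z$, $\Z^2$, and the Klein bottle group $K$) by hand and reserving the main argument for the non-elementary case. Directly, $\Z=M_\Phi$ with $F_0$ trivial, $\Z^2=M_{\mathrm{id}}$ with $F_1=\Z$, and $K=M_{-\mathrm{id}}$ with $F_1=\Z$ (here $-\mathrm{id}$ has order $2$ in $\Out(F_1)$), and all three have non-trivial cyclic center; so these cause no trouble in either direction. This separation is forced because the facts recalled in Section~\ref{prel} about $\Delta_G$, and about the center being cyclic and elliptic, are only stated for non-elementary $G$. From now on I treat the non-elementary case, which on the mapping-torus side corresponds to $n\ge2$.

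For the implication from mapping torus to GBS group, suppose $G=M_\Phi=\langle F_n,t\mid tgt\m=\alpha(g)\rangle$ with $\Phi$ of finite order $d$ and $n\ge2$. First I would produce a central element: since $\alpha^d$ is inner, say $\alpha^d=\mathrm{conj}_w$, the identity $\alpha\alpha^d\alpha\m=\alpha^d$ forces $\alpha(w)w\m$ to lie in the (trivial) center of $F_n$, hence $\alpha(w)=w$; a short computation then shows that $z=t^dw\m$ commutes with $F_n$ and with $t$, so it is central and of infinite order. Next I would pass to $\bar G=G/\langle z\rangle$, generated by $F_n$ and $t$ subject to $t^d=w$, which sits in an extension $1\to F_n\to\bar G\to\Z/d\to1$ and is thus a finitely generated virtually free group. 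By the structure theory of such groups, $\bar G$ acts cocompactly on a tree $T$ with finite vertex and edge stabilizers, and $G$ acts on $T$ through $G\to\bar G$. Finally, $G$ is torsion-free, because in $1\to F_n\to G\to\Z\to1$ both kernel and quotient are; therefore each stabilizer of the $G$-action, being a torsion-free extension of a finite group by the central $\langle z\rangle\cong\Z$, is infinite cyclic. Hence $G$ is a GBS group, and it has non-trivial center since it contains $z$.

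For the converse, let $G$ be a non-elementary GBS group with non-trivial center. I would invoke from Section~\ref{prel} that the center $Z$ is infinite cyclic and elliptic, and that there is an epimorphism $\rho:G\to\Z$ whose kernel $N$ contains no non-trivial elliptic element. Since every vertex stabilizer of the Bass-Serre tree is infinite cyclic and elliptic, $N$ meets each stabilizer trivially, so $N$ acts freely on the tree and is free. Writing $G=N\rtimes\langle t\rangle$ for any $t$ with $\rho(t)=1$, conjugation by $t$ gives $\Phi\in\Out(N)$ with $G=M_\Phi$. To see $\Phi$ has finite order, set $c=\rho(z)\ne0$ for a generator $z$ of $Z$ and write $z=t^cm$ with $m\in N$; as $z$ is central, restriction to $N$ gives $\mathrm{conj}_{t^c}=\mathrm{conj}_{m\m}$, an inner automorphism of $N$, so $\Phi^c=\mathrm{id}$ in $\Out(N)$. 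It then remains to show $N$ is finitely generated, hence $N=F_n$: the subgroup $\langle N,z\rangle=N\times\langle z\rangle$ has index $c$ in $G$, and a finite-index subgroup of the finitely generated group $G$ is finitely generated, which forces $N$ to be so.

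I expect the main obstacle to lie in the GBS direction, specifically in certifying that the stabilizers of the $G$-action on $T$ are \emph{exactly} infinite cyclic rather than merely virtually cyclic; this is what forces the combined use of the virtually-free structure theorem for $\bar G$ together with the torsion-freeness of $G$. A secondary delicate point is the finite generation of the kernel $N$ in the converse, which does not follow formally from $N$ being free and must be extracted from the finite-index direct factor $N\times\langle z\rangle$. Finally, producing the honest equality $\alpha(w)=w$ (and not merely equality up to the center) is the one place where $n\ge2$ is genuinely used, which is exactly why the elementary groups must be handled on their own.
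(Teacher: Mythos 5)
Your proof is correct, but the direction ``$M_\Phi$ is a GBS group'' is organized quite differently from the paper's. The paper first realizes $\Phi$ by a finite-order automorphism of a graph $\Lambda$ (Culler--Khramtsov--Zimmermann) and reads off a GBS structure from the resulting $2$-complex of circles and annuli; only afterwards does it produce a central element, and it does so indirectly, via the modular homomorphism having finite image and hence a normal infinite cyclic subgroup which is then shown to be central. You reverse the logical order: you first exhibit the central element $z=t^dw^{-1}$ by a direct computation (using that the center of $F_n$ is trivial for $n\ge2$ to get $\alpha(w)=w$ honestly, which is cleaner than the paper's route and in fact reproves its Corollary about $\alpha(g)=g$ along the way), then quotient by $z$ to get a finitely generated virtually free group, invoke the Karrass--Pietrowski--Solitar/Stallings structure theorem to get a cocompact tree action with finite stabilizers, and pull it back to $G$, where torsion-freeness upgrades the two-ended stabilizers to $\Z$. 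This trades one realization theorem for another of comparable depth, but buys a more self-contained construction of the center. In the converse direction you follow essentially the paper's path (the epimorphism to $\Z$ killing no elliptic element, freeness of the kernel, finite order from centrality of $t^cm$), with a small variation: you get finite generation of $N$ from the finite-index subgroup $N\times\langle z\rangle$ and freeness from the free action on the Bass--Serre tree, whereas the paper gets both from the virtually $F_r\times\Z$ structure plus Stallings; both are fine. Your explicit treatment of the elementary cases $\Z$, $\Z^2$, $K$ is a point the paper deliberately leaves to the reader.
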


\begin{proof} 
We may assume that $G$ is not $\Z$, $\Z^2$, or the Klein bottle group. 

 Let $G$ be a GBS group with non-trivial center $Z$.  Then (see Section \ref{prel})  $G$ is virtually $F_r\times \Z$ for some $r\ge 2$, and  
 there is an epimorphism $\tau:G\to \Z$ which is non-trivial on $Z$. The restriction of $\tau$ to $F_r\times \Z$ is non-trivial on the factor $\Z$, so its kernel is free of finite rank (projecting onto $F_r$ identifies the kernel with a finite index subgroup of $F_r$). 
 It follows that $\ker\tau$ is finitely generated and virtually free, hence equal to some $F_n$  since $G$ is torsion-free. This shows that $G=M_\Phi$ for some $\Phi\in\Out(F_n)$. View the center $Z$ as a subgroup of $M_\Phi$ and write a generator   as $t^kg$ with $k\in\Z$ and $g\in F_n$. One has $k\ne0$, and  $\Phi^k$ is the identity because $Z$ is central.

Conversely, suppose that $\Phi\in\Out(F_n)$ has finite order. We first show that $M_\Phi$ is a GBS group. 
The following simple argument is due to V.\ Guirardel. By \cite{Cu, Kh, Zi},  $\Phi$ is induced by an automorphism of a finite graph $\Lambda$ with fundamental group isomorphic to $F_n$. The group $M_\Phi$ is the fundamental group of a 2-complex  
made of circles (associated to vertices of $\Lambda$) and annuli (associated to edges), so is a GBS group. We now show that the center of $M_\Phi$ is non-trivial.

Let $\alpha\in\Aut(F_n)$ be a representative of $\Phi$. 
There exists $k\ge1$ such that $\alpha^k$ is conjugation by some $g\in F_n$.  If $g$ is trivial, $t^k$ is central.  If not, we consider
the subgroup of $M_\Phi$ generated by $F_n$ and $t^k$. It has finite index and non-trivial center (generated by $t^kg\m$). It follows that the modular map $\Delta_{M_\Phi}$ (see Section \ref{prel})  has finite image, so $M_\Phi$ has  a normal infinite cyclic subgroup $Z$ (containing $t^kg\m$). This subgroup is not contained in $F_n$, so is in fact central: otherwise any non-trivial $z\in Z$ would be conjugate to $z\m$, a contradiction since $z$ maps non-trivially under the natural projection from $M_\Phi$ to $\Z=\langle t\rangle$.
\end{proof}

\begin{example}\label{tet}
 $\Out(F_2)$ contains an element $\Phi$ of order 6. It is represented by an automorphism $f$ of the theta graph (it has 2 vertices joined by 3 edges, and $f$ is a symmetry of order 6). One considers midpoints of edges as  vertices, so that there is one orbit of edges (with period 6) and two orbits of vertices (one with period 3, one with period 2). It follows that  $M_\Phi=\langle a,b\mid a^3=b^2\rangle$.
\end{example}

 \begin{rem*}
It may be shown that $M_\Phi$ is not a GBS group if $\Phi\in\Out(F_n)$ has infinite order:  $M_\Phi$ is residually finite (as a semi-direct product of finitely generated residually finite groups), 
so cannot be a GBS group unless it is virtually $F_r\times \Z$ (see \cite{Les} for a proof);
but then $\Phi$ has finite order (argue as in the proof of the proposition).
\end{rem*}

\begin{cor}
Let $\alpha\in\Aut(F_n)$ and $k\ge2$. If $\alpha^k$ is conjugation by some  $g\in F_n$, then $\alpha(g)=g$.
\end{cor}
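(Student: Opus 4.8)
The plan is to argue purely algebraically, using only the functoriality of conjugation under automorphisms together with the triviality of the center of $F_n$; the hypothesis is needed essentially just to produce the identity $\alpha^k=c_g$. Write $c_h\in\Aut(F_n)$ for the inner automorphism $x\mapsto hxh\m$, so that the assumption reads $\alpha^k=c_g$. First I would record the general identity
$$\alpha\circ c_h\circ\alpha\m=c_{\alpha(h)},$$
valid for every $h\in F_n$ and every $\alpha\in\Aut(F_n)$; it is immediate from $\alpha(hxh\m)=\alpha(h)\,\alpha(x)\,\alpha(h)\m$. Taking $h=g$ gives $\alpha\circ c_g=c_{\alpha(g)}\circ\alpha$.

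Next I would exploit the obvious fact that any two powers of $\alpha$ commute. Since $\alpha^k=c_g$, the equality $\alpha\circ\alpha^k=\alpha^k\circ\alpha$ reads $\alpha\circ c_g=c_g\circ\alpha$. Comparing the two expressions just obtained for $\alpha\circ c_g$ yields $c_{\alpha(g)}\circ\alpha=c_g\circ\alpha$, and composing on the right with $\alpha\m$ gives $c_{\alpha(g)}=c_g$. Hence $c_{g\m\alpha(g)}=\mathrm{id}$, i.e.\ $g\m\alpha(g)$ lies in the center of $F_n$.

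It remains to conclude that $g\m\alpha(g)=1$. For $n\ge2$ the center of $F_n$ is trivial, so this is immediate and we obtain $\alpha(g)=g$; this is exactly the case relevant to the preceding results, where $M_\Phi$ is a non-elementary GBS group. I do not expect any genuine obstacle here: the only structural input is the centerlessness of a free group of rank at least two, and in particular the restriction $k\ge2$ plays no role beyond excluding the trivial case $k=1$ (where $\alpha=c_g$ gives $\alpha(g)=g$ directly).

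As an alternative, one can read the corollary off the proof of Proposition \ref{susp} itself. There the element $z=t^kg\m$ is shown to be central in $M_\Phi=\langle F_n,t\mid tgt\m=\alpha(g)\rangle$. Writing out that $z$ commutes with $t$, and using $g\m t=t\,\alpha\m(g\m)$, one gets $t^{k+1}g\m=t^{k+1}\alpha\m(g\m)$, hence $\alpha\m(g)=g$ and therefore $\alpha(g)=g$. I would, however, present the self-contained computation above as the main argument, since it avoids the modular-map machinery and makes transparent why the conclusion depends only on $\alpha^k$ being inner.
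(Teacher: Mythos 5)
Your main argument is correct, and it takes a genuinely different and more elementary route than the paper. The paper deduces the corollary from the machinery of Proposition \ref{susp}: one first shows that $M_\Phi$ is a GBS group whose modular map has finite image, hence has a normal (in fact central) infinite cyclic subgroup containing $t^kg\m$, and then reads off $\alpha(g)=g$ from the fact that $t^kg\m$ commutes with $t$ --- which is exactly the ``alternative'' you sketch at the end, so you have in effect reconstructed the paper's proof as your secondary route. Your primary argument instead uses only the identity $\alpha\circ c_g\circ\alpha\m=c_{\alpha(g)}$, the trivial fact that $\alpha$ commutes with $\alpha^k=c_g$, and the centerlessness of $F_n$ for $n\ge2$; it is complete as written and buys strictly more, since it proves the statement for any automorphism of any centerless group whose $k$-th power is inner (a cleaner generalization than the one in the remark following the corollary, which asks for realizability by a finite-order homeomorphism). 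You are also right that $k\ge2$ is immaterial. The one point worth making explicit is the case $n=1$, which you only implicitly set aside: there the identity is ``conjugation by'' every $g$, and inversion of $\Z$ gives a literal counterexample, so the corollary must be read with $n\ge2$ (the paper does this tacitly by excluding $\Z$, $\Z^2$ and the Klein bottle group at the start of the proof of Proposition \ref{susp}).
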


\begin{proof}
We have seen in the   proof of Proposition \ref{susp}  that
 $t^kg\m$ is central in $M_\Phi$. In particular it commutes with $t$, so $t$ commutes with $g$. This means $\alpha(g)=g$.
\end{proof}

\begin{rem*}
This result is not specific to $G=F_n$. It holds whenever the outer automorphism defined by $\alpha$  may be represented by a homeomorphism  of finite order of a space $X$ with $\pi_1(X)=G$.
\end{rem*}

\begin{cor} \label{rmt}
One may compute the rank of the mapping torus of a finite order automorphism of a free group $F_n$:   there is an algorithm which, given $\Phi\in\Out(F_n)$ of finite order, computes the rank of $M_\Phi=F_n\rtimes_\Phi\Z$.
\end{cor}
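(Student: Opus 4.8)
The plan is to combine Proposition \ref{susp} with Theorem \ref{rang}. Since $\Phi$ has finite order, $M_\Phi$ is a GBS group by Proposition \ref{susp}, and Theorem \ref{rang} computes the rank of a GBS group from \emph{any} labelled graph representing it via the formula $\rk = \beta(\Gamma)+\mu(\Gamma)$. So the whole task reduces to producing, effectively, a labelled graph $\Gamma$ representing $M_\Phi$ out of the data of $\Phi$ (say, the words $\alpha(x_i)$ for a representative $\alpha\in\Aut(F_n)$).

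The one ingredient in the proof of Proposition \ref{susp} that is not manifestly algorithmic is the realization of $\Phi$ by a finite-order simplicial automorphism $f$ of a finite graph $\Lambda$ with $\pi_1(\Lambda)\cong F_n$, obtained from \cite{Cu, Kh, Zi}. The key observation is that, \emph{granting} the existence of such a realization, one can locate it by an exhaustive search: enumerate all pairs $(\Lambda,f)$ with $\Lambda$ a finite graph satisfying $\beta(\Lambda)=n$ and $f$ a finite-order simplicial automorphism; for each, fix an identification $\pi_1(\Lambda)\cong F_n$ (read off from a spanning tree) and compute the induced $f_*\in\Out(F_n)$; then test whether $f_*=\Phi$. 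This last test is decidable, because $f_*=\Phi$ in $\Out(F_n)$ amounts to deciding whether $f_*\circ\Phi^{-1}$ is an inner automorphism of $F_n$, which is algorithmic. The search is guaranteed to terminate precisely because a realization exists by \cite{Cu, Kh, Zi}.

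Next I would build $\Gamma$ from such an $(\Lambda,f)$ exactly as in the proof of Proposition \ref{susp} and in Example \ref{tet}. After subdividing each edge of $\Lambda$ once, so that the cyclic group $\langle f\rangle$ of order $N$ acts without inversions, the underlying graph of $\Gamma$ is the quotient $\Lambda/\langle f\rangle$. To each orbit of size $k$ one attaches a weight $k$ (the degree with which the corresponding circle or annulus wraps the base $S^1$ of the mapping torus), and the label of an edge near an incident vertex is the ratio (edge-orbit size)$/$(vertex-orbit size); this ratio is an integer since the edge stabilizer is contained in the vertex stabilizer. By the 2-complex description of $M_\Phi$ in Proposition \ref{susp}, the resulting labelled graph $\Gamma$ represents $M_\Phi$. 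Subdivision, passing to the quotient graph, counting orbit sizes, and forming the labels are all plainly effective.

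Finally I would apply Theorem \ref{rang}, which gives $\rk(M_\Phi)=\beta(\Gamma)+\mu(\Gamma)$. The first Betti number $\beta(\Gamma)$ is immediate from the graph, and the plateaunic number $\mu(\Gamma)$ is computable: the plateaux are finite in number and are detected by checking divisibility of labels by the finitely many relevant primes, after which $\mu(\Gamma)$ is a finite minimal–hitting-set computation over the set of plateaux. I expect the only genuine obstacle to be the effectivity of the graph realization; once that is recast as a terminating search justified by \cite{Cu, Kh, Zi}, the remainder is routine bookkeeping feeding into Theorem \ref{rang}.
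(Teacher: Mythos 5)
Your proposal follows essentially the same route as the paper: the paper's proof of Corollary \ref{rmt} is a few lines, saying that one finds a standard GBS presentation of $M_\Phi$ either by Tietze transformations from the semidirect-product presentation or ``by arguing as in Example \ref{tet}'', and then applies Theorem \ref{rang}. You have chosen the second option and fleshed out the effectivity that the paper leaves implicit (the quotient-graph construction with labels given by ratios of orbit sizes, and the computability of $\beta$ and $\mu$); all of that is correct and matches the paper's intent.

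One step as written would fail, though it is easily repaired. In your exhaustive search you enumerate pairs $(\Lambda,f)$, \emph{fix} one identification $\pi_1(\Lambda)\cong F_n$ per pair, and test whether the induced $f_*$ equals $\Phi$ in $\Out(F_n)$. The realization theorem of \cite{Cu,Kh,Zi} only guarantees that $\Phi$ is induced by $f$ under \emph{some} isomorphism $\pi_1(\Lambda)\cong F_n$; with your fixed spanning-tree identification, $f_*$ is only a conjugate of $\Phi$ in $\Out(F_n)$, so your search may never register a match and hence may not terminate. The fix is either to enumerate triples $(\Lambda,f,\psi)$ with $\psi\in\Aut(F_n)$ twisting the identification (the test ``is $\psi f_*\psi^{-1}\Phi^{-1}$ inner'' remains decidable by solving a simultaneous conjugacy problem in $F_n$, and termination is again guaranteed by the realization theorem), or to note that conjugate outer automorphisms have isomorphic mapping tori of equal rank, so it suffices to certify that $f_*$ is conjugate to $\Phi$ --- but the first repair is the cleaner one since it avoids invoking a conjugacy algorithm for $\Out(F_n)$. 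Alternatively, the paper's Tietze-transformation route sidesteps the realization search entirely: one enumerates Tietze moves from $\langle x_1,\dots,x_n,t\mid tx_it^{-1}=\alpha(x_i)\rangle$ until a presentation of standard GBS form appears, which must happen since $M_\Phi$ is a GBS group by Proposition \ref{susp}.
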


\begin{proof}  
Given $\Phi$ of finite order, we know that $M_\Phi$ is a GBS group. We may find 
 a standard GBS presentation of $M_\Phi$    by applying Tietze transformations to the presentation as a semidirect product, or by arguing as in Example \ref{tet}. We then apply Theorem \ref{rang}.
\end{proof}

\begin{rem*}
 Given $\Phi\in\Out(F_n)$ and $\Psi\in\Out(F_q)$, both of finite order, one may decide whether $M_\Phi$ and $M_\Psi$ are isomorphic \cite{FoGBS}.
\end{rem*}
 
\section{The rank of   finite index subgroups}

This section is devoted to the proof of the following result:

\begin{thm} \label{indfi}
If $G$ is a GBS group, and $\ov  G \inc G$ has finite index, then $\rk(\ov G)\ge\rk(G)$.
\end{thm}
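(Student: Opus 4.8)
The plan is to reduce the inequality to a combinatorial statement about the two labelled graphs and then to extract the missing topology from the ramification of the covering. Since $\ov G$ has finite index in $G$, it acts on the same Bass--Serre tree $T$, so it is again a GBS group, represented by $\ov\Gamma=T/\ov G$, and the inclusion $\ov G\inc G$ induces a label-respecting morphism $\pi:\ov\Gamma\to\Gamma$ carrying vertices to vertices and edges to edges. Applying Theorem \ref{rang} to both groups, it suffices to prove
\[ \beta(\ov\Gamma)+\mu(\ov\Gamma)\ \ge\ \beta(\Gamma)+\mu(\Gamma). \]
First I would record the label transformation law: if $\ov v$ lies over $v$ and $\ov e$ over $e$ with $o(\ov e)=\ov v$, and $\alpha_v=[G_v:\ov G_{\ov v}]$, $\gamma_e=[G_e:\ov G_{\ov e}]$ are the stabilizer indices, then $\lambda_{\ov e}=(\gamma_e/\alpha_v)\lambda_e$. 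This is exactly what controls whether $p$-divisibility survives lifting, and it shows that $\pi$ is an honest covering (labels lift unchanged) precisely where these indices are trivial.

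The inequality $\beta(\ov\Gamma)\ge\beta(\Gamma)$ is the easy half. The composite $\ov G\hookrightarrow G\twoheadrightarrow\pi_1^{top}(\Gamma)=F_{\beta(\Gamma)}$ kills every element elliptic in $\ov G$ (such an element is elliptic in $G$), so it factors through $\pi_1^{top}(\ov\Gamma)=F_{\beta(\ov\Gamma)}$; its image is a subgroup $H\le F_{\beta(\Gamma)}$ of finite index, and the Nielsen--Schreier formula gives $\beta(\ov\Gamma)\ge\rk(H)=1+[F_{\beta(\Gamma)}:H]\,(\beta(\Gamma)-1)\ge\beta(\Gamma)$.

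Next I would isolate the good case. Call a $p$-plateau $P\inc\Gamma$ \emph{clean} if $\pi$ is unramified over it, i.e.\ the indices $\alpha_v,\gamma_e$ occurring along $P$ are prime to $p$; the label law then forces every component of $\pi\m(P)$ to be a $p$-plateau. For clean plateaux a projection argument applies: if $\ov V_1$ is a set of vertices of $\ov\Gamma$ of cardinality $\mu(\ov\Gamma)$ meeting every plateau, then it meets some component of $\pi\m(P)$, so $\pi(\ov V_1)$ meets $P$. Writing $\mu'(\Gamma)$ for the minimal number of vertices meeting every clean plateau, this yields $\mu(\ov\Gamma)=|\ov V_1|\ge\mu'(\Gamma)$. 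If every plateau of $\Gamma$ is clean then $\mu'(\Gamma)=\mu(\Gamma)$, and combined with $\beta(\ov\Gamma)\ge\beta(\Gamma)$ the theorem follows at once; this already covers the case when $\pi$ comes from a genuine covering (Proposition \ref{revet}).

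The real work is the ramified part, where $\mu'(\Gamma)<\mu(\Gamma)$ and the deficit must be paid for by the excess $\beta(\ov\Gamma)-\beta(\Gamma)$. The key lemma I would aim for is a compensation estimate $\beta(\ov\Gamma)-\beta(\Gamma)\ge\mu(\Gamma)-\mu'(\Gamma)$: each plateau responsible for the drop from $\mu$ to $\mu'$ forces $\pi$ to ramify over it, and I would assign to it a distinct independent loop of $\ov\Gamma$ produced by that ramification, as in Figure \ref{disp}, where branching over the two $2$-plateaux trades the loss of one plateau for exactly one extra loop. I expect the main obstacle to be precisely the $2$-plateaux, as the introduction warns. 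For an odd prime the degree-$p$ branching is cyclic and the assignment ``one ramified plateau $\mapsto$ one new loop'' is transparent; but a degree-$2$ branched cover is governed by a single $\Z/2$-class that may couple several $2$-plateaux simultaneously (Figure \ref{disp} branches over a \emph{union} of two of them), so the per-plateau count breaks down and the compensating loops must be produced by analysing $\pi\m(P)$ for all the coupled $2$-plateaux together. I would therefore prove the compensation estimate first under the assumption that $\Gamma$ contains no proper $2$-plateau, and then treat proper $2$-plateaux by this separate, more delicate accounting.
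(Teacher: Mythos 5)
Your reduction is the same as the paper's: pass to $\ov\Gamma=T/\ov G$, record the local transformation law for labels and multiplicities, and reduce via Theorem \ref{rang} to the purely combinatorial inequality $\beta(\ov\Gamma)+\mu(\ov\Gamma)\ge\beta(\Gamma)+\mu(\Gamma)$ (this is Proposition \ref{keyg} in the paper). Your proof of $\beta(\ov\Gamma)\ge\beta(\Gamma)$ via Nielsen--Schreier is a clean algebraic alternative to the paper's topological lifting-of-loops argument, and your ``clean plateau'' projection argument is essentially the paper's observation that $\mu(\ov\Gamma)\ge\mu(\Gamma)$ once every plateau of $\Gamma$ has a plateau in its preimage.

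The gap is that the entire hard half of the theorem is left as a conjecture. Your ``compensation estimate'' $\beta(\ov\Gamma)-\beta(\Gamma)\ge\mu(\Gamma)-\mu'(\Gamma)$ is stated in the conditional (``the key lemma I would aim for''), the proposed mechanism (``assign to each ramified plateau a distinct independent loop'') is then immediately acknowledged to break down for $2$-plateaux, and the promised ``separate, more delicate accounting'' is never supplied. That accounting is the proof: in the paper it occupies Subsections 5.2--5.4 and runs through the valence formula $\beta=1+\sum_v(\frac{d_v}2-1)$, the quantities $\Delta_v$, and an induction on the number of edges. Two specific points show why your lemma cannot be waved through. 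First, your clean/ramified dichotomy is not the right one: what matters is whether \emph{some} component of $\pi\m(P)$ is a plateau, which holds as soon as a single vertex of $\pi\m(P)$ has multiplicity prime to $p$ (the paper's ``totally unfolded'' condition is the correct negation); with your stronger notion of clean, $\mu'(\Gamma)$ can be strictly smaller than necessary, so you are demanding more compensation than the ramification actually produces, and you would need to verify the estimate still holds. Second, the Betti-number compensation genuinely \emph{fails} by $1$ for the exceptional maps (accordions and generalized branched $2$-coverings of trees, e.g.\ Figure \ref{disp} and Lemma \ref{deltf}); the paper recovers the inequality there only by a separate argument showing that in the exceptional cases the plateau count $\mu(\Gamma)$ also drops by $1$ relative to the naive bound. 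Neither the exceptional cases nor the argument that handles them appears in your proposal, so as written the proof is incomplete precisely where the theorem is difficult.
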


\subsection{A reduction}\label{reduc}

The first step in the proof is to reduce the theorem to a result about graphs (Proposition \ref{keyg}).

We say that  a map between graphs is a \emph{morphism} if it 
sends  vertex to   vertex and   edge to   edge.

We represent $G$ by a labelled graph $\Gamma$. In this section we will assume that $\Gamma$ is reduced (if $\lambda_e=\pm1$, then $e$ is a loop). The group $\ov G$ acts on the Bass-Serre tree $T$ of $\Gamma$  and this yields a graph of groups $\ov \Gamma=T/\ov  G$, with a morphism $\pi:\ov \Gamma\to \Gamma$. We describe $\ov \Gamma$ and $\pi$ using topology. More conceptually,   $\pi$ (and the admissible maps of Section \ref{pfi}) are coverings of graphs of groups in the sense of Bass \cite{Ba}; our point of view is closer to that of Scott-Wall \cite{SW}.
 
 It is standard to  associate  a foliated 2-complex $\Theta$ to $\Gamma$.  One associates a circle $C_v$ to each vertex $v$, an annulus $A_\varepsilon=[0,1]\times S^1$ foliated by circles $\{*\}	\times S^1$ to each non-oriented edge $\varepsilon$, and boundaries of annuli are glued to   circles by maps whose degree (positive or negative)  is given by the labels of $\Gamma$. One recovers $\Gamma$ from $\Theta$ by collapsing each circle to a point.  
 The fundamental group of $\Theta$ is $G$, and $\ov G$ defines a finite covering $\rho:\ov \Theta\to \Theta$ whose degree is the index of $\ov G$. The complex $\ov \Theta$ is also made of circles and annuli,   $\ov \Gamma$ is the corresponding graph, and $\pi$ is induced by $\rho$. 

\begin{dfn}[multiplicity $m_x$, $m_{\ov e}$]
A point $x$ of $\ov \Gamma$ corresponds to a circle of $\ov\Theta$, and the restriction of $\rho$ to this circle is a covering map whose degree we call the \emph{multiplicity} $m_x$ of $x$:
we thus associate    a positive integer $m_x$  to each point $x$ of $\ov \Gamma $. 
All points belonging to the interior of a given edge have the same multiplicity, so we also define the multiplicity $m_{\ov e}$ of an edge $\ov e$ of $\ov \Gamma $. Given a point $u$ in $\Gamma$, the sum $\sum_{x\in\pi\m(u)}m_x$ is constant, equal to the index of $\ov G$; we call it the \emph{total multiplicity} of $\pi$.
\end{dfn}

Algebraically, multiplicities are indices $[G_p:G_p\cap\ov G]$, with $G_p$ the stabilizer of     a point   $p\in T$. 

\begin{lem} \label{loc}
The morphism $\pi:\ov\Gamma\to\Gamma$ satisfies the following condition $(*)$:

Given  an edge   $e$  of $\Gamma$, with origin $v$ and label $\lambda_e$ near $v$, and   $x\in\pi\m(v)$, define  $k_{x,e}$ as the gcd   $m_x\wedge\lambda_e$. Then (see Figure \ref{eto}) there are $k_{x,e}$ edges of $\ov\Gamma$ with origin $x$ mapping to $e$; they each have multiplicity $m_x/k_{x,e}$, and their label near $x$ is $\lambda_e/k_{x,e}$. 
\end{lem}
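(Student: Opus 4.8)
The plan is to work entirely with the covering $\rho:\ov\Theta\to\Theta$ and to reduce the assertion to elementary covering theory of circles, since the statement is local around the circle $\ov C_x\inc\rho\m(C_v)$. Write $B_e$ for the boundary leaf $\{0\}\times S^1$ of the annulus $A_\varepsilon$ that is glued to $C_v$ near $v$; the attaching map $\phi_e:B_e\to C_v$ has degree $\lambda_e$. Let $q_x:\ov C_x\to C_v$ be the restriction of $\rho$, a covering of degree $m_x$. Because $\rho$ is a covering and $\Theta$ is obtained by gluing the circles $C_v$ and the annuli $A_\varepsilon$ along the maps $\phi_e$, the complex $\ov\Theta$ is glued from the circles $\ov C_x$ and the components of $\rho\m(A_\varepsilon)$ along the \emph{lifts} of the $\phi_e$. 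Thus the edges of $\ov\Gamma$ issuing from $x$ over $e$ correspond exactly to the components of $\rho\m(B_e)$ attached to $\ov C_x$, i.e.\ to a circle $\ov B$ equipped with a covering $\ov B\to B_e$ (the restriction of $\rho$) and an attaching map $\psi:\ov B\to\ov C_x$ making the square with $q_x$ and $\phi_e$ commute; equivalently, to the connected components of the fibre product $B_e\times_{C_v}\ov C_x$.

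Next I would compute this fibre product by passing to fundamental groups. One has $\pi_1(C_v)=\Z$, the cover $q_x$ corresponds to the subgroup $m_x\Z$, and $(\phi_e)_*:\Z\to\Z$ is multiplication by $\lambda_e$. The fibre of $q_x$ is $\Z/m_x\Z$, on which $\pi_1(B_e)=\Z$ acts through $(\phi_e)_*$, that is, by translation by $\lambda_e$. The number of orbits of this action is $m_x\wedge\lambda_e=k_{x,e}$, and each orbit has $m_x/k_{x,e}$ elements. Hence there are exactly $k_{x,e}$ edges $\ov e$ over $e$ issuing from $x$, and for each of them the covering $\ov B\to B_e$ has degree equal to the orbit size $m_x/k_{x,e}$. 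Since $\rho$ restricts on the boundary leaf $\ov B$ of the annulus $\ov A$ to a covering of degree equal to the multiplicity $m_{\ov e}$, this gives $m_{\ov e}=m_x/k_{x,e}$.

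Finally I would read off the label $\lambda_{\ov e}=\deg\psi$ near $x$ from the commuting square, by comparing the two composites $\ov B\to C_v$ as signed degrees: this yields $m_x\,\lambda_{\ov e}=\lambda_e\,m_{\ov e}$, whence $\lambda_{\ov e}=\lambda_e m_{\ov e}/m_x=\lambda_e/k_{x,e}$. Since $m_x$ and $m_{\ov e}$ are positive, $\lambda_{\ov e}$ inherits the sign of $\lambda_e$, as required; and as $k_{x,e}$ divides both $m_x$ and $\lambda_e$, the numbers $m_{\ov e}$ and $\lambda_{\ov e}$ are indeed integers.

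I expect the only genuine obstacle to be the first paragraph: justifying cleanly that a covering of the glued complex $\Theta$ is itself glued from coverings of the pieces along lifts of the attaching maps, so that the lifted annulus-ends at $\ov C_x$ biject with the components of the fibre product. This needs some care because $\Theta$ is not a manifold and the maps $\phi_e$ may have degree $|\lambda_e|>1$, so the circles $B_e$ are not embedded; the identification of each such $\ov B$ with a fibre-product component follows by noting it is a map of connected covers of $B_e$ of equal degree $m_x/k_{x,e}$, hence a homeomorphism. Once this is in place the gcd count and the degree bookkeeping are routine. (The same three conclusions can alternatively be extracted algebraically from Bass--Serre theory, by describing $\pi\m(v)$ as the $\ov G$-orbits on $G/G_v$ and the edges over $e$ as orbits on $G/G_e$, but the topological argument is more transparent and fits the setup.)
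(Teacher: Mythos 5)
Your proof is correct and takes essentially the same route as the paper's: the paper also works with the preimage under $\rho$ of the end of the annulus $A_\varepsilon$ near $C_v$ and reduces to an elementary computation with subgroups of $\Z$ (your orbit count for translation by $\lambda_e$ on $\Z/m_x\Z$ is exactly the paper's statement that $m_x\Z\cap\lambda_e\Z$ has index $m_x\lambda_e/k_{x,e}$ in $\Z$, index $m_x/k_{x,e}$ in $\lambda_e\Z$ and $\lambda_e/k_{x,e}$ in $m_x\Z$). You merely spell out more carefully the identification of the lifted annulus ends at $\ov C_x$ with components of the fibre product, which the paper compresses into ``one simply studies its preimage by $\rho$''.
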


\begin{figure}[h]
\begin{center}
\begin{pspicture}(100,160) 
 \psline  (0,0)(65,0)
     \pscircle*( 0,0) {3}
 \put(0,-12){$v$}
\put(40,-12){$e$}
\rput(12,12){\blue$\lambda_e$}
      \rput (0,110){ 
\rput(2,25){\blue$\lambda_e/k_{x,e}$}        
   \rput(-10,0) {$x$}   
   \rput(146,10){$k_{x,e}=m_x\wedge\lambda_e$}
   \pscircle*( 0,0) {3}  
       \psline  (0,0)(60,50) 
          \psline  (0,0)(60,30) 
            \psline  (0,0)(60,-30) 
            \psline[linestyle=  dashed, dash=3pt 2pt
            ](55,13)(55,-7)
  \rput*  {90}(100,12){$\underbrace{\phantom{aaaaaaaaaaaaaa}}$    }  
    }
 \rput(-15,80)   
 {
  \pscurve[arrowsize=5pt]  {->}(0,10)  (-10,-30)(0,-60)
    }
   \put(-45,50){$m_x$}   
  \rput(70,68)   
 {
  \pscurve[arrowsize=5pt]  {->}(0,0)  (10,-30)(0,-55)
  \rput(35,-30){$m_x/k_{x,e}$}
    }  
   \end{pspicture}
\end{center}
\setlength\belowcaptionskip{-0.2cm}
\caption{condition $(*)$
} \label{eto}
\end{figure}
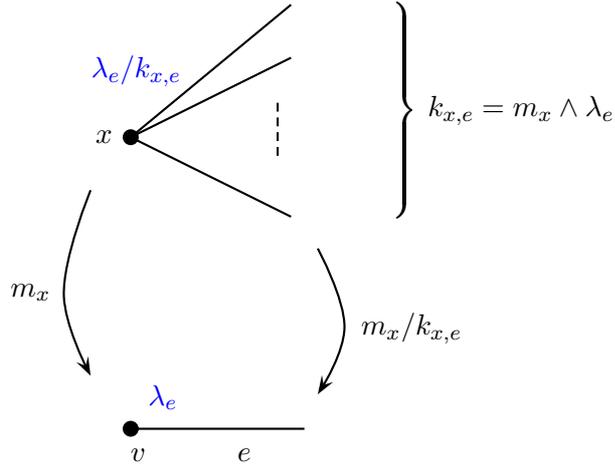

\begin{proof}  
A neighborhood of $C_v$ in the annulus $A_\varepsilon$ corresponding to $e$ has fundamental group $\Z$, and one simply studies its preimage by $\rho$. 
The group $G_x$ carried by $x$ in $\ov\Gamma$ has   index $m_x$ in $G_v$, and $G_e$ has index $\lambda_e$ in $G_v$. Their intersection has index $lcm(m_x,\lambda_e)=m_x\lambda_e/k_{x,e}$ in $G_v$, index $m_x/k_{x,e}$ in $G_e$, and index $\lambda_e/k_{x,e}$ in $G_x$.
\end{proof}

\begin{rem*}
One may also prove this lemma algebraically, by considering intersections of $\ov G$ with vertex and edge stabilizers of $T$.  
\end{rem*}
 
\begin{prop} \label{keyg}
Let $\Gamma$ and $\ov \Gamma$ be   connected labelled graphs. Assume that $\Gamma$ is reduced (if $\lambda_e=\pm1$, then $\Gamma$ is a loop),  and a positive multiplicity is assigned to each vertex and edge of $\ov \Gamma$.
  If there exists a morphism $\pi:\ov \Gamma\to \Gamma$ satisfying $(*)$,  
  then $\beta(\ov \Gamma)+\mu(\ov \Gamma)\ge \beta(\Gamma)+\mu(\Gamma)$. 
\end{prop}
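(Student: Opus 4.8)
The plan is to prove the two inequalities $\beta(\ov\Gamma)\ge\beta(\Gamma)$ and $\mu(\Gamma)\le\mu(\ov\Gamma)+\xi$, where $\xi:=\beta(\ov\Gamma)-\beta(\Gamma)$; adding $\beta(\Gamma)$ throughout then gives the statement. The Betti inequality I would dispose of combinatorially. Condition $(*)$ guarantees that every oriented edge $e$ of $\Gamma$ with origin $v$ has at least one lift at each $x\in\pi\m(v)$ (as $k_{x,e}=m_x\wedge\lambda_e\ge1$), so every edge-path of $\Gamma$ lifts, starting from any prescribed point of the fiber over its origin. Lifting a based loop $\gamma$ repeatedly and using finiteness of the fibers, some power $\gamma^N$ lifts to an honest loop of $\ov\Gamma$; hence $\pi_*\colon H_1(\ov\Gamma;\Q)\to H_1(\Gamma;\Q)$ is onto, which yields $\beta(\ov\Gamma)\ge\beta(\Gamma)$ together with the identification $\xi=\dim\ker\pi_*$ that will be used for the bookkeeping below.

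The heart of the matter is the $\mu$-inequality, and the basic computation is that by $(*)$ a lift of $e$ at $x$ carries, near $x$, the label $\lambda_e/(m_x\wedge\lambda_e)$, whose $p$-adic valuation is $\max(0,v_p(\lambda_e)-v_p(m_x))$. Thus at a vertex $x$ with $p\nmid m_x$ the lifted labels have exactly the same $p$-divisibility as downstairs. Write $\Gamma_p$ (resp.\ $\ov\Gamma_p$) for the spanning subgraph of $\Gamma$ (resp.\ $\ov\Gamma$) whose edges are those carrying two labels prime to $p$. I would then prove the following \textbf{Key Lemma}: if $P$ is a $p$-plateau of $\Gamma$ and $x\in\pi\m(P)$ has $p\nmid m_x$, then the component of $x$ in $\ov\Gamma_p$ is a $p$-plateau of $\ov\Gamma$ contained in $\pi\m(P)$. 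The proof follows an internal edge out of $x$: it lies over an internal (hence light) edge of $P$, so its far end again lies over $P$; moreover that lifted edge has multiplicity $m_x/(m_x\wedge\lambda_e)$, which is prime to $p$, and comparing with the formula for the multiplicity computed from the far end forces the new vertex to have multiplicity prime to $p$ as well. Hence the whole component stays over $P$ at $p$-prime multiplicities, and the valuation computation shows it has no light exit, so it is a plateau. Call $P$ \emph{good} if such an $x$ exists and \emph{bad} otherwise (equivalently, $p\mid m_x$ for every $x\in\pi\m(P)$). Taking a minimum hitting set $\ov S$ of $\ov\Gamma$, the Key Lemma shows $\pi(\ov S)$ meets every good plateau, and $|\pi(\ov S)|\le|\ov S|=\mu(\ov\Gamma)$.

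Since $\Gamma$ itself is always met (as $\pi(\ov S)\ne\es$), it remains to meet the \emph{proper bad} plateaux missed by $\pi(\ov S)$ by adjoining at most $\xi$ further vertices. The mechanism I would exploit is this: over a bad $p$-plateau $P$ every multiplicity is divisible by $p$, so each exit edge $f$ of $P$ (with $p\mid\lambda_f$) has $k_{x,f}=m_x\wedge\lambda_f$ divisible by $p$, i.e.\ it lifts to at least $p$ edges, and whenever $v_p(\lambda_f)=v_p(m_x)$ these lifts become light. This is exactly why a bad plateau "opens up" upstairs and why $\mu$ may drop. But this branching also produces cycles of $\ov\Gamma$ mapping to back-and-forth (null-homologous) paths of $\Gamma$, hence lying in $\ker\pi_*$; the plan is to charge each missed bad plateau to such classes and thereby bound the number of vertices we must add by $\dim\ker\pi_*=\xi$.

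The step I expect to be the main obstacle is precisely this last accounting, and it is where the introduction's warning about $2$-plateaux bites. In the model case (index $p$) the branching over a bad $P$ replaces each exit edge by a $p$-gon, contributing $p-1$ independent classes to $\ker\pi_*$; for odd $p$ this leaves slack, whereas for $p=2$ it contributes only a single bigon and the estimate is tight. Consequently, when $\Gamma$ has no proper $2$-plateau every missed bad plateau is a $p$-plateau with $p$ odd, and one can select a pairwise disjoint family of them and match it to linearly independent elements of $\ker\pi_*$ with room to spare. In the presence of $2$-plateaux, however, the relevant branching loci may overlap those of other plateaux, so one must choose the disjoint family of missed bad plateaux and the corresponding homology classes simultaneously, treating the $2$-plateaux separately, in order to keep the charged classes independent. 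Making this matching precise is the technical core of the argument.
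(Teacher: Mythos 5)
Your first two steps are sound and coincide with the paper's own setup: the inequality $\beta(\ov\Gamma)\ge\beta(\Gamma)$ is Lemma \ref{simp2}, and your Key Lemma (a preimage vertex of odd-at-$p$ multiplicity over a $p$-plateau $P$ propagates to a whole component of $\pi\m(P)$ being a $p$-plateau) is exactly the observation of Subsection \ref{main}. But the final accounting -- which you correctly identify as the core -- is not just unfinished, it is quantitatively wrong as stated. Your budget for the bad plateaux missed by $\pi(\ov S)$ is $\xi=\beta(\ov\Gamma)-\beta(\Gamma)=\dim\ker\pi_*$, and this budget can be zero while a missed bad plateau still exists. Concretely: let $\Gamma$ be a single edge $vw$ with $\lambda_e=2$ near $v$ and $\lambda_{\tilde e}=3$ near $w$ (reduced, $\beta=0$, plateaux $\{v\}$, $\{w\}$, $\Gamma$, so $\mu=2$). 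Take the admissible map in which $v$ has one preimage $x$ with $m_x=2$ and $w$ has two preimages $y_1,y_2$ with $m_{y_i}=1$; then $\ov\Gamma$ is the path $y_1xy_2$ with labels $3,1,1,3$, so $\ov\beta=0$ and $\xi=0$. The plateaux of $\ov\Gamma$ are $\{y_1\}$, $\{y_2\}$, $\ov\Gamma$, forcing $\ov S=\{y_1,y_2\}$ and $\pi(\ov S)=\{w\}$. The bad $2$-plateau $\{v\}$ is missed, yet you are allowed to adjoin at most $\xi=0$ vertices. The proposition survives only because $|\pi(\ov S)|=1<2=|\ov S|$: the slack here comes from the collapse of the fiber of $\ov S$, not from $\ker\pi_*$, and your scheme has no mechanism coupling these two sources of slack. (A second, milder issue: even when the total budget suffices, $\pi(\ov S)\cup\{\text{one vertex per missed bad plateau}\}$ depends on the choice of $\ov S$ and can overshoot $\mu(\ov\Gamma)+\xi$; you would have to choose $\ov S$ and the charged homology classes simultaneously, which you acknowledge but do not do.)

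This is precisely why the paper does not argue through $\ker\pi_*$ at all. It works with the quantity $\beta+t$ ($t$ the number of terminal vertices) and the local defects $\Delta_v=\sum_{x\in\ov\Gamma_v}|\frac{d_x}2-1|-|\frac{d_v}2-1|$, whose sum equals $(\ov\beta+\ov t)-(\beta+t)$; this identity absorbs both the growth of $\beta$ and the behaviour of terminal fibers in a single bookkeeping. Even then the clean inequality $\beta+t+c\le\ov\beta+\ov t$ fails for an explicit list of exceptional maps (accordions and generalized branched $2$-coverings of trees), and the proof requires an induction on the number of edges with a separate recovery argument in the exceptional cases. So your outline captures the easy half of the proposition, but the stated plan for the other half would fail on the example above and would need to be replaced by something like the paper's $\beta+t$ analysis.
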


Recall that $\beta $ is the first Betti number, and $\mu $ is the plateaunic number (Definition \ref{plato}).
It follows from Theorem \ref{rang} that this proposition implies Theorem \ref{indfi}. The remainder of this section is devoted to its proof. 

\subsection{The main idea}\label{main}

Let $\pi$ be as in the proposition. It is open: the image of a neighborhood of a vertex $x$ is a neighborhood of $\pi(x)$.
 If $\Gamma'\inc\Gamma$ is a connected subgraph, every component of $\pi\m(\Gamma')$ maps onto $\Gamma'$, and the restriction satisfies $(*)$.
 Note that no label near a terminal vertex of $\ov \Gamma$ is $\pm1$ ($\ov\Gamma$ is minimal, but not necessarily reduced).  
 
 In the situation of Lemma \ref{loc}, let $\ov e$ be an edge of $\ov \Gamma $   with origin $x$ mapping onto $e$. We call $\ov  e$ a \emph{lift} of $e$ (at $x$).  
 
Let $p$ be a prime. 

 If $\lambda_e$ is not divisible by   $p$ (in particular if $e $ is contained in a $p$-plateau), the label $\lambda_{\ov  e}$ of $\ov e$ near $x$ is not divisible by $p$. Moreover $k_{x,e}$ is not divisible by $p$, so  $m_x$ and  $m_{\ov  e}$ are both divisible by $p$ or both non divisible by $p$.  
 
 If $\lambda_e$ is   divisible by $p$,   note that $k_{x,e}$ is divisible by $p$ if and only if $m_x$ is. If $m_x$ is not divisible by $p$, then $\lambda_{\ov  e}$ is divisible by $p$.

It is easy to check that $\beta(\ov \Gamma)\ge \beta(\Gamma)$ (see Lemma \ref{simp2}). Suppose that, for  each plateau $P\inc \Gamma$, some component of $\pi\m(P)$ is a plateau. Then $\mu(\ov \Gamma)\ge \mu(\Gamma)$ and the result is clear. We therefore consider a $p$-plateau $P$, and we study 
$\pi\m(P)$. We may assume $P\ne\Gamma$.

If there exists a vertex $x\in\pi\m(P)$ with $m_x$ not divisible by $p$, it follows from previous observations that the same holds for all points in the component $\ov  P$ of  $\pi\m(P)$ containing $x$, and $\ov  P $ is a $p$-plateau. 

We therefore assume that
$m_x$ is divisible by $p$ for every vertex $x\in\pi\m(P)$.  Since $P\ne\Gamma$, there is an
edge $e$ with origin $v\in P$ which  is not contained in $P$.  
 If $x\inÊ\pi\m(v)$, the number of lifts of $e$ at $x$ is divisible by $p$ since both $m_x$ and $\lambda_e$ are. In particular, denoting by $d_v$ the valence of a vertex, we have $d_x\ge d_v+p-1$.

We conclude that, \emph{if $P$ is a $p$-plateau such that no component of $\pi\m( P)$ is a $p$-plateau,   there exist  $v\in P$ and $x\in \pi\m(v)$ such that the valence of $x$ satisfies  $d_x\ge d_v+p-1$}. 

The main idea of the proof now is the following: the existence of such $P$'s may cause $\mu(\ov \Gamma)$ to be smaller than $\mu(\Gamma)$, but the increase in valence will force $\beta(\ov \Gamma)$ to be larger than $\beta(\Gamma)$, so that $\beta(\ov \Gamma)+\mu(\ov \Gamma)\ge \beta(\Gamma)+\mu(\Gamma)$ does hold (recall the formula $   \beta(\Gamma) =1+\frac12\sum_v (d_v-2)$ for the first Betti number of a connected graph $\Gamma$). 

If $p>2$ for all $P$'s as above, we have  $d_x\ge d_v+2$ and the proof is not too hard, as we shall now explain. On the other hand, if $p=2$, we only have $d_x\ge d_v+1$ and this makes the proof much more complicated. 

\subsection{The simple case}\label{sim}

\begin{no*}
{} \ \newline  
$\beta$: first Betti number (we write $\ov \beta$, $\beta'$ for the first Betti number of $\ov \Gamma$, $\Gamma'$, etc.)

\noindent $\mu$: plateaunic number 

\noindent $t$: number of terminal vertices  

\noindent $d_v$: valence of a vertex

\noindent $V$: vertex set  

\noindent $E_v$: oriented edges with origin $v$

\noindent $\ov\Gamma_v$: preimage $\pi\m(v)$

\noindent $m_x,m_{\ov e}$: multiplicity of a vertex, an edge

\end{no*}

The following   lemma is left to the reader.

\begin{lem} \label{simp} Let $\Gamma$ be a finite connected graph, with vertex set $V$. Then:
  \begin{eqnarray*} 
 \beta&=&1+\sum_{v\in V}(\frac{d_v}2-1) \\	
 \beta+t&=&1+\sum_{v\in V} | \frac{d_v}2-1 |   \\
 \beta+\frac t2&=&1+\sum_{v\in V}\max(\frac{d_v}2-1,0). 
  \end{eqnarray*} 
    \qed 
\end{lem}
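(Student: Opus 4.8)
Lemma \ref{simp} consists of three elementary combinatorial identities about a finite connected graph $\Gamma$, and my plan is to prove them in order, with the first doing essentially all the work and the other two following by a short case analysis on the quantity $|\frac{d_v}{2}-1|$. The starting point is the classical Euler-characteristic formula for a connected graph: if $\Gamma$ has $|V|$ vertices and $|\cale|$ (non-oriented) edges, then $\beta = 1 - |V| + |\cale|$. The only fact I need to couple this with is the handshake identity $\sum_{v\in V} d_v = 2|\cale|$, since each non-oriented edge contributes $1$ to the valence of each of its two endpoints (a loop contributing $2$ to its single endpoint). These two facts will be assumed as standard.

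For the first identity, I would simply substitute. From $2|\cale| = \sum_v d_v$ we get $|\cale| = \frac{1}{2}\sum_v d_v$, so
\[
\beta \;=\; 1 - |V| + \tfrac12\sum_{v\in V} d_v \;=\; 1 + \sum_{v\in V}\Bigl(\tfrac{d_v}{2} - 1\Bigr),
\]
using $|V| = \sum_{v\in V} 1$ to absorb the $-|V|$ term into the sum. This is the base identity; the other two are obtained by comparing $\frac{d_v}{2}-1$ with its absolute value and with its positive part.

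For the second identity, I would split the vertex set according to valence. The summand $\frac{d_v}{2}-1$ differs from $|\frac{d_v}{2}-1|$ exactly when it is negative, i.e.\ when $d_v < 2$; since $\Gamma$ is connected with at least one edge (the degenerate one-vertex cases can be checked directly), this means $d_v = 1$, i.e.\ $v$ is terminal. For a terminal vertex $\frac{d_v}{2}-1 = -\frac12$ while $|\frac{d_v}{2}-1| = \frac12$, a difference of $1$; for every non-terminal vertex the two agree. Hence $\sum_v |\frac{d_v}{2}-1| - \sum_v (\frac{d_v}{2}-1) = t$, the number of terminal vertices, which combined with the first identity gives $\beta + t = 1 + \sum_v |\frac{d_v}{2}-1|$. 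The third identity is handled the same way, noting $\max(\frac{d_v}{2}-1,0) = \frac12\bigl((\frac{d_v}{2}-1) + |\frac{d_v}{2}-1|\bigr)$, so that summing and averaging the first two identities yields $\beta + \frac{t}{2} = 1 + \sum_v \max(\frac{d_v}{2}-1,0)$.

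I do not anticipate a genuine obstacle here, which is presumably why the author leaves the lemma to the reader. The only point requiring a moment's care is the boundary behavior of the connectedness hypothesis: one should confirm that the single-vertex no-edge graph ($\beta=0$, $t=0$) and graphs consisting only of loops still satisfy all three identities, and that ``terminal'' is correctly identified with valence exactly one under the convention that a loop adds $2$ to valence. Once the handshake identity and Euler's formula are granted, everything reduces to the per-vertex comparison of a linear function with its absolute value and positive part.
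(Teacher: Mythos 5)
Your proof is correct and is evidently the argument the paper intends (it leaves the lemma to the reader): Euler's formula $\beta=1-|V|+|\cale|$ plus the handshake identity give the first line, and the other two follow from the per-vertex comparison of $\frac{d_v}{2}-1$ with its absolute value and positive part at terminal vertices. One small slip in your closing remark: the second and third identities in fact \emph{fail} for the one-vertex edgeless graph (there $1+\sum_v|\frac{d_v}{2}-1|=2$ while $\beta+t=0$), so the lemma implicitly assumes $\Gamma$ has at least one edge --- harmless in this paper, where it is only applied to graphs containing edges, but your claim that this degenerate case "can be checked directly" to satisfy all three identities is not right.
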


\begin{lem}\label{simp2} Let $\pi:\ov \Gamma\to \Gamma$ be an open morphism between finite connected graphs.   
Then:
 \begin{eqnarray*} \beta&\le& \ov \beta \\
 \beta+\frac t2&\le& \ov \beta+\frac {\ov t}2  
   \end{eqnarray*} 
   ($\beta,t$ refer to $\Gamma$, and $\ov \beta$, $\ov t$ refer to $\ov \Gamma$).
\end{lem}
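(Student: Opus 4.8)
The goal is to prove Lemma \ref{simp2}: for an open morphism $\pi:\ov\Gamma\to\Gamma$ between finite connected graphs, we have $\beta\le\ov\beta$ and $\beta+\frac t2\le\ov\beta+\frac{\ov t}2$.

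The plan is to use the valence formulas from Lemma \ref{simp} together with the key local property that openness forces on valences. First I would record what openness means combinatorially: since $\pi$ sends vertices to vertices and edges to edges, and the image of a neighborhood of $x$ is a neighborhood of $\pi(x)$, every edge $e$ of $\Gamma$ with origin $v=\pi(x)$ must have at least one lift at $x$. This gives the fundamental valence inequality $d_x\ge d_{\pi(x)}$ for every vertex $x$ of $\ov\Gamma$.

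For the first inequality $\beta\le\ov\beta$, I would sum this valence inequality over all vertices of $\ov\Gamma$, grouping the vertices fiber by fiber. Using the formula $\beta=1+\sum_v(\frac{d_v}2-1)$, I write
\[
\ov\beta-1=\sum_{x\in\ov V}\left(\frac{d_x}2-1\right)=\sum_{v\in V}\sum_{x\in\pi\m(v)}\left(\frac{d_x}2-1\right).
\]
The point is that each fiber $\pi\m(v)$ is non-empty (since $\pi$ is a morphism of connected graphs, hence surjective on vertices — this follows from openness plus connectedness, or can be checked directly), and for each $x$ in the fiber $\frac{d_x}2-1\ge\frac{d_v}2-1$. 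Summing, $\sum_{x\in\pi\m(v)}(\frac{d_x}2-1)\ge \frac{d_v}2-1$ provided $\frac{d_v}2-1\ge 0$; when $\frac{d_v}2-1<0$ one has to be slightly more careful, but since each term $\frac{d_x}2-1\ge \frac{d_v}2-1$ and the fiber is non-empty, the fiber sum is at least $\frac{d_v}2-1$ in all cases (a single term already dominates, and extra terms have $d_x\ge d_v\ge1$, so they are each $\ge\frac{d_v}2-1$). This yields $\ov\beta-1\ge\sum_v(\frac{d_v}2-1)=\beta-1$, giving $\ov\beta\ge\beta$.

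For the second inequality, involving terminal vertices, I would use the third formula of Lemma \ref{simp}, $\beta+\frac t2=1+\sum_v\max(\frac{d_v}2-1,0)$, and run the same fiberwise argument on the function $\max(\frac{d_x}2-1,0)$. The crucial step is to verify the fiber estimate $\sum_{x\in\pi\m(v)}\max(\frac{d_x}2-1,0)\ge\max(\frac{d_v}2-1,0)$, which follows from $d_x\ge d_v$ (so $\max(\frac{d_x}2-1,0)\ge\max(\frac{d_v}2-1,0)$ term by term, and the fiber is non-empty) together with the non-negativity of the function. Summing over $v$ then gives $\ov\beta+\frac{\ov t}2\ge\beta+\frac t2$. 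I expect the main obstacle to be purely bookkeeping: confirming that every fiber is non-empty and handling the low-valence vertices ($d_v\le 2$) correctly so that the term-by-term domination and non-emptiness genuinely combine into the stated fiber inequalities. No deeper difficulty is anticipated, which is consistent with the paper leaving Lemma \ref{simp} to the reader and treating Lemma \ref{simp2} as routine.
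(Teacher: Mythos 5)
Your proof of the \emph{second} inequality is correct and coincides with the paper's: since $d_x\ge d_{\pi(x)}$ and the function $d\mapsto\max(\frac d2-1,0)$ is non-decreasing and non-negative, each non-empty fiber satisfies $\max(\frac{d_v}2-1,0)\le\sum_{x\in\pi\m(v)}\max(\frac{d_x}2-1,0)$, and summing over $v$ with the third identity of Lemma \ref{simp} gives $\beta+\frac t2\le\ov\beta+\frac{\ov t}2$. (Your surjectivity remark is also fine: the image of $\ov\Gamma$ is a non-empty open and closed subgraph of the connected graph $\Gamma$.)

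Your proof of the \emph{first} inequality, however, has a genuine gap. The fiberwise estimate $\sum_{x\in\pi\m(v)}(\frac{d_x}2-1)\ge\frac{d_v}2-1$ is simply false in general. When $d_v=1$ every term $\frac{d_x}2-1$ is only bounded below by $-\frac12$, and a fiber containing $k\ge2$ terminal vertices of $\ov\Gamma$ contributes $-\frac k2<-\frac12$; your parenthetical ("extra terms \dots are each $\ge\frac{d_v}2-1$") bounds each extra term below by a \emph{negative} quantity, which does not stop the sum from dropping below that quantity. Concretely, take $\Gamma$ a single edge $vw$ and $\ov\Gamma$ a path $x_1$--$c$--$x_2$ with $\pi(x_1)=\pi(x_2)=v$ and $\pi(c)=w$: this is an open morphism, the fiber over $v$ contributes $-1<-\frac12$, and only the compensating surplus at $c$ makes the global count come out right. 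So $\beta\le\ov\beta$ cannot be established fiber by fiber with the function $\frac d2-1$; this is the same phenomenon that makes $\beta+t\le\ov\beta+\ov t$ fail (see the Remark after the lemma), and it is exactly why the third identity of Lemma \ref{simp}, with the truncated function, is the one the paper feeds into the local argument. The paper proves the first inequality by a global argument instead: openness implies that every loop $\gamma$ in $\Gamma$ is, up to replacing it by a power, the image of a loop in $\ov\Gamma$ (extend a lift edge by edge and use finiteness of $\ov\Gamma$ to close up); applied to a basis of $H_1(\Gamma;\Q)$ this shows $\pi_*:H_1(\ov\Gamma;\Q)\to H_1(\Gamma;\Q)$ is onto, hence $\ov\beta\ge\beta$. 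You need to replace your first argument by something of this kind.
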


\begin{proof}
 Consider any loop $\gamma$ in $\Gamma$. Since $\pi$ is open, there exists a loop   
 in $\ov \Gamma$ projecting onto a power of $\gamma$. This proves the first inequality.
 
For the other inequality, write  $$\max(\frac{d_v}2-1,0)  \le \sum_{x\in \ov\Gamma_v} \max(\frac{d_x}2-1,0)$$ for $v\in V$ and use the third equality of the previous lemma.
\end{proof}

Let $\pi$ be as in Proposition \ref{keyg}.
\begin{dfn}[$\Delta_v$]
For $v\in V$, define 
$$\Delta_v=\sum_{x\in\ov\Gamma_v}  | \frac{d_x}2-1  |- | \frac{d_v}2-1  |.$$
\end{dfn}

This is non-negative, unless  
   $v$ is a terminal vertex whose preimages all have valence 2   
(in this case $\Delta_v=-\frac12$). Note   that such a $v$   is a 2-plateau, since the label near $v$   must be even. Thus   $\Delta_v$ is always non-negative if $\Gamma$ contains no proper 2-plateau.

Also note that the second equality of Lemma \ref{simp} yields 
$$\displaystyle\sum_{v\in V}\Delta_v=(\ov \beta+\ov t)-(\beta+  t).$$ 

\begin{rem*}
Since    $\Delta_v$ may be negative, 
it is not always true that $
\beta +t\le \ov \beta+ {\ov t} $ (for instance, $\ov \Gamma$ may be a circle subdivided into two intervals,   mapping onto an interval). See Lemma \ref{bt} for a complete discussion. 
\end{rem*}

\begin{dfn}[boundary, frontier point, interior plateau] \label{bord}
The \emph{boundary} $\bo P$ of  a $p$-plateau is the set of oriented edges $e$ with origin $v$ in $P$ and  terminal point not in $P$;   the vertex $v$ is a \emph{frontier point} of $P$.  Note that the label $\lambda_e$ of the edge $e$ is divisible by $p$.   

A plateau $P $ is \emph{interior} if $P$ is not the whole graph and $P$  contains no terminal vertex.
\end{dfn}

Every terminal vertex of $\Gamma$ (or $\ov \Gamma$) is a plateau, so  $\mu$ is the sum of $t$ and the minimal cardinality of a set meeting every interior plateau (unless $t=0$ and $\Gamma$ is the only plateau).

\begin{dfn}[totally unfolded] A  plateau $P\inc\Gamma$ is \emph{$p$-totally unfolded} if $P$ is a $p$-plateau and, given any edge $e\in\bo P$  with origin $v\in P$, and $x\in\ov \Gamma_v$, the number of lifts of $e$ at $x$ is divisible by $p$.  A  plateau   is  \emph{totally unfolded} if it is $p$-totally unfolded for some $p$.   
\end{dfn}

 It may happen that $P$ is both a $p$-plateau and a $p'$-plateau, totally unfolded as a $p$-plateau but not as a $p'$-plateau.  We  consider such a $P$ as totally unfolded. 

The arguments  in the previous subsection show:
\begin{lem} \label{uf}
If no component of $\pi\m(P)$ is a plateau, then   $P$ is  totally unfolded. \qed
\end{lem}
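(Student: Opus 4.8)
The plan is to fix a single prime $p$ for which $P$ is a $p$-plateau (one exists because $P$ is a plateau by hypothesis); showing that $P$ is $p$-totally unfolded for this particular $p$ is enough, since by definition being totally unfolded only requires one such prime. Everything then reduces to a dichotomy on the multiplicities of the vertices lying over $P$: either $m_x$ is divisible by $p$ for every vertex $x\in\pi\m(P)$, or some such vertex has $m_x$ not divisible by $p$. My claim is that the first alternative forces $P$ to be $p$-totally unfolded, while the second is incompatible with the hypothesis of the lemma.

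The first alternative is immediate. Take any boundary edge $e\in\bo P$, with origin $v\in P$, and any $x\in\ov\Gamma_v$. Since $e\in\bo P$ its label $\lambda_e$ is divisible by $p$, and by assumption $m_x$ is divisible by $p$, so $k_{x,e}=m_x\wedge\lambda_e$ is divisible by $p$. By condition $(*)$ this number $k_{x,e}$ is exactly the number of lifts of $e$ at $x$, which is precisely what the definition requires; so $P$ is $p$-totally unfolded and we are done.

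The heart of the matter is to exclude the second alternative, and here I would import the computation already sketched in Section \ref{main}. Suppose some $x\in\pi\m(P)$ has $m_x$ not divisible by $p$, and let $\ov P$ be the component of $\pi\m(P)$ containing $x$. First I would propagate the property ``multiplicity prime to $p$'' across all of $\ov P$: every edge $\ov e$ of $\ov P$ covers an edge $e$ contained in $P$, whose labels are prime to $p$, so by $(*)$ the edge multiplicity $m_{\ov e}=m_x/k_{x,e}$ divides $m_x$ and is prime to $p$, and reading $(*)$ from the far endpoint $x'$ gives $m_{x'}=m_{\ov e}\cdot k_{x',\tilde e}$, a product of two integers prime to $p$; connectedness of $\ov P$ then yields $m_y$ prime to $p$ for every vertex $y\in\ov P$. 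Next I would verify that $\ov P$ is a $p$-plateau: for an oriented edge $\ov f$ with origin $y\in\ov P$ covering $f$, if $f$ is contained in $P$ then $\lambda_f$, hence its divisor $\lambda_{\ov f}$, is prime to $p$ while $\ov f\inc\ov P$; if $f$ is not contained in $P$ then $\lambda_f$ is divisible by $p$, while $k_{y,f}$ is prime to $p$ since $m_y$ is, so $\lambda_{\ov f}=\lambda_f/k_{y,f}$ is divisible by $p$ and $\ov f\not\inc\ov P$. Thus $\ov P$ is a $p$-plateau, contradicting the hypothesis that no component of $\pi\m(P)$ is a plateau.

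The two alternatives being exhaustive and the second impossible, we are left with the first, so $P$ is $p$-totally unfolded, hence totally unfolded. The only genuinely delicate step --- the main obstacle --- is the verification, in the excluded second alternative, that $\ov P$ is really a $p$-plateau: this is where condition $(*)$ is used in full, and where one must track the vertex multiplicities $m_x$ and the edge labels $\lambda_{\ov e}$ simultaneously under divisibility by $p$.
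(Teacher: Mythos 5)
Your proof is correct and follows exactly the paper's argument: the paper proves this lemma by citing the dichotomy of Subsection \ref{main}, namely that if some vertex of $\pi\m(P)$ has multiplicity prime to $p$ then its component is a $p$-plateau (via propagation of coprimality through $(*)$), while if all multiplicities over $P$ are divisible by $p$ then every boundary edge has $k_{x,e}=m_x\wedge\lambda_e$ divisible by $p$, i.e.\ $P$ is $p$-totally unfolded. You have merely written out in full the details the paper leaves implicit, so there is nothing to add.
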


\begin{dfn} [minimal plateau, $c$] \label{minim}
  A \emph{minimal plateau} is a plateau  $P\inc\Gamma$ which is interior, totally unfolded, and minimal for these properties. If   a minimal  plateau   is $p$-totally unfolded, we say that it is a minimal $p$-plateau.
  
  Let $c$ be the minimal cardinality of a set of vertices meeting every minimal plateau.
\end{dfn}

\begin{lem} \label{delt}
If $\Gamma$ contains no proper 2-plateau, one has $\beta +t+c\le \ov \beta +\ov t$.
\end{lem}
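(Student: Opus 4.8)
The plan is to rewrite the desired inequality using the identity recorded just after the definition of $\Delta_v$, namely $\sum_{v\in V}\Delta_v=(\ov\beta+\ov t)-(\beta+t)$. Thus $\beta+t+c\le\ov\beta+\ov t$ is equivalent to $c\le\sum_{v\in V}\Delta_v$. Because $\Gamma$ contains no proper $2$-plateau, every $\Delta_v$ is non-negative (the remark following the definition of $\Delta_v$), so the whole sum dominates the cardinality of $A:=\{v\in V:\Delta_v\ge1\}$; indeed $|A|\le\sum_{v\in A}\Delta_v\le\sum_{v\in V}\Delta_v$. Hence it will suffice to prove that $A$ meets every minimal plateau, since then $c\le|A|$.

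The key step, and the main obstacle, is a pointwise estimate $\Delta_v\ge1$ at a suitable vertex of each minimal plateau. So fix a minimal plateau $P$; it is $p$-totally unfolded for some prime $p$, and interior, hence a proper plateau. Since $\Gamma$ has no proper $2$-plateau, $P$ cannot be a $2$-plateau, so $p\ge3$. I would then take a frontier point $v$ of $P$, that is, a vertex of $P$ which is the origin of a boundary edge $e\in\bo P$; its label $\lambda_e$ is divisible by $p$. The $p$-total-unfoldedness hypothesis says exactly that, for every $x\in\ov\Gamma_v$, the number of lifts of $e$ at $x$ is divisible by $p$; being positive (by openness of $\pi$) it is at least $p$. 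Combined with openness for the remaining $d_v-1$ edges at $v$ (each lifting at least once), this yields the valence jump $d_x\ge d_v+(p-1)\ge d_v+2$ at every preimage $x$. As $v$ is interior it is not terminal, so $d_v\ge2$ and all the quantities $\tfrac{d_x}2-1$, $\tfrac{d_v}2-1$ are non-negative and the absolute values in the definition of $\Delta_v$ drop out. Retaining a single term of the sum then gives
$$\Delta_v=\sum_{x\in\ov\Gamma_v}\Big(\tfrac{d_x}2-1\Big)-\Big(\tfrac{d_v}2-1\Big)\ge\frac{p-1}2\ge1 .$$

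Since this frontier point $v$ lies in $P$ and satisfies $\Delta_v\ge1$, it belongs to $A$, so $A$ meets $P$; as $P$ was an arbitrary minimal plateau, $A$ meets every minimal plateau, whence $c\le|A|\le\sum_{v\in V}\Delta_v=(\ov\beta+\ov t)-(\beta+t)$, which is the claimed inequality. The delicate points to watch are the use of $p\ge3$ — this is exactly what converts the valence increase $p-1$ into a gain of at least $1$ in $\Delta_v$, and is where the hypothesis of no proper $2$-plateau is essential — and the verification that the total-unfolding hypothesis really does furnish, at some vertex of each minimal plateau, an edge leaving $P$ whose lifts are all divisible by $p$ (so that a genuine frontier point with a valence jump is available). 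Everything else is routine bookkeeping with the Betti-number identity of Lemma \ref{simp} and the non-negativity of the $\Delta_v$.
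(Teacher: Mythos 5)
Your proof is correct and follows the same strategy as the paper: the identity $\sum_{v\in V}\Delta_v=(\ov\beta+\ov t)-(\beta+t)$, non-negativity of all $\Delta_v$ in the absence of proper 2-plateaux, and the valence jump $d_x\ge d_v+p-1$ with $p\ge3$ at a frontier point of each ($p$-totally unfolded, interior) minimal plateau, yielding $\Delta_v\ge\frac{p-1}{2}\ge1$ there. The one place you genuinely diverge is the final counting. The paper fixes one boundary edge $e_i$ and frontier point $v_i$ per minimal plateau and proves the sharper pointwise bound $\Delta_v\ge n_v$, where $n_v$ is the number of minimal plateaux whose chosen frontier point is $v$; this forces it to handle several plateaux sharing a vertex or an edge, via the observation that a product of $n$ distinct odd primes is at least $2n+1$. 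You sidestep all of that: since $c$ is by definition the least cardinality of a set meeting every minimal plateau, it suffices that $A=\{v:\Delta_v\ge1\}$ meets each one, giving $c\le|A|\le\sum_{v\in A}\Delta_v\le\sum_{v\in V}\Delta_v$ directly. This is a mild but real simplification for this lemma (the paper's finer bookkeeping with $n_v$, $n_e$ and distinct primes reappears in the harder Lemma \ref{delt2}, where it cannot be avoided so cheaply). One shared caveat: like the paper, you implicitly assume each minimal plateau has a nonempty boundary $\bo P$ so that a frontier point exists; this can fail only in the marginal situation where a plateau contains every vertex of $\Gamma$ but omits an edge, an edge case the paper's own proof also leaves untreated.
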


\begin{proof}  
We have seen that   
$\sum_{v\in V}\Delta_v=(\ov \beta +\ov t)-(\beta +  t),$
so it suffices to show  $\sum_{v\in V}\Delta_v\ge   c$. Since $\Delta_v$ is non-negative at terminal vertices (because there is no 2-plateau), we may restrict the sum to non-terminal vertices. 

For each minimal plateau $P_i$, we fix an edge $e_i\in \bo P_i$ with origin  
$v_i\in P_i$, and an odd prime  $p_i$ such that
$P_i$ is $p_i$-totally unfolded.

Given a non-terminal vertex $v$, consider 
  the minimal plateaux $P_i$ such that 
$v_i=v$. Let $n_v$ be their number (possibly 0). Note that the associated $p_i$'s are distinct. 
We shall show   $ \Delta_v\ge n_v$. Assuming this, $\sum_{v\in V}\Delta_v$ is bounded below by the number of minimal plateaux, hence by $c$ as required.

Fix $x\in \ov\Gamma_v$. Since $v$ is not terminal, the terms $\displaystyle\frac{d_x}2-1$ and $\displaystyle\frac{d_v}2-1$ are non-negative and 
$$2\Delta_v=d_x-d_v+\sum_{y\in\ov  \Gamma_v\setminus\{x\}}(d_y-2)\ge d_x-d_v.$$

Given an edge $e$ with origin $v$, consider the plateaux $P_j$ such that $e_j=e$.  
Let $n_e$ be their number (possibly 0). The number $d_{x,e}$ of lifts of $e$ at $x$ is divisible by the product of the corresponding $p_j$'s, so is bounded below by $2n_e+1$ (because the product of $n$ odd prime numbers is at least $2n+1$). 
 Summing over all edges $e$ with origin $v$ we get $$d_x= \sum_{e\in E_v} d_{x,e}\ge 2\sum_{e\in E_v} n_e+d_v= 2n_v+d_v,$$ so $2\Delta_v\ge d_x-d_v\ge2n_v$ as required.
  \end{proof}
  
  \begin{cor} \label{cassim}
  Proposition \ref{keyg} is true if $\Gamma$ contains no proper 2-plateau.
  \end{cor}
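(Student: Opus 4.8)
The plan is to combine the three key facts already assembled in this subsection. Lemma \ref{simp2} gives $\beta+\frac t2\le\ov\beta+\frac{\ov t}2$, and Lemma \ref{delt} (applicable precisely because we assume no proper $2$-plateau) gives $\beta+t+c\le\ov\beta+\ov t$. The goal is to deduce $\beta+\mu\le\ov\beta+\ov\mu$. First I would recall the observation following Definition \ref{bord}: since every terminal vertex is a plateau, one has $\mu=t+(\text{minimal cardinality of a set meeting every interior plateau})$ for $\Gamma$, and similarly $\ov\mu=\ov t+(\text{the analogous quantity for }\ov\Gamma)$. So the task reduces to controlling these ``interior'' contributions.

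The bridge between the two lemmas is the number $c$ of Definition \ref{minim}, the minimal cardinality of a set meeting every \emph{minimal} plateau of $\Gamma$. The crucial point is that $c$ bounds the interior-plateau contribution to $\mu$ from above: a set meeting every minimal plateau automatically meets every interior totally unfolded plateau (each such plateau contains a minimal one), and by Lemma \ref{uf}, a plateau whose preimage contains no plateau \emph{must} be totally unfolded. Thus the plateaux of $\Gamma$ that genuinely fail to produce plateaux in $\ov\Gamma$ are all totally unfolded, and a set of $c$ vertices handles all of them; the remaining interior plateaux of $\Gamma$ do lift to plateaux of $\ov\Gamma$, so they are accounted for by the interior contribution to $\ov\mu$. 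Making this bookkeeping precise is where I would spend the most care: I would argue that $\mu\le t+c+(\text{interior contribution to }\ov\mu)$, or equivalently that $\mu-\mu_{\mathrm{int}}(\ov\Gamma)\le t+c$, where $\mu_{\mathrm{int}}(\ov\Gamma)=\ov\mu-\ov t$.

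Granting that bookkeeping, the arithmetic is short. From Lemma \ref{delt}, $c\le(\ov\beta-\beta)+(\ov t-t)$, and from Lemma \ref{simp2}, $\ov t-t\ge 2(\beta-\ov\beta)$, hence the two can be combined; but the cleaner route is direct. Write $\mu=t+\mu_{\mathrm{int}}(\Gamma)$ and $\ov\mu=\ov t+\mu_{\mathrm{int}}(\ov\Gamma)$. The structural claim above yields $\mu_{\mathrm{int}}(\Gamma)\le c+\mu_{\mathrm{int}}(\ov\Gamma)$, so
$$\beta+\mu=\beta+t+\mu_{\mathrm{int}}(\Gamma)\le \beta+t+c+\mu_{\mathrm{int}}(\ov\Gamma)\le \ov\beta+\ov t+\mu_{\mathrm{int}}(\ov\Gamma)=\ov\beta+\ov\mu,$$
where the middle inequality is exactly Lemma \ref{delt}. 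This is precisely the conclusion of Proposition \ref{keyg} under the standing hypothesis.

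The main obstacle I anticipate is not the inequality chain but justifying the structural claim $\mu_{\mathrm{int}}(\Gamma)\le c+\mu_{\mathrm{int}}(\ov\Gamma)$ cleanly. One must verify that interior plateaux of $\Gamma$ split into those that are totally unfolded (covered by the $c$ term via minimal plateaux and Lemma \ref{uf}) and those having a plateau component in their preimage (covered by $\mu_{\mathrm{int}}(\ov\Gamma)$), and that a single optimal vertex set can be assembled respecting this split without double counting. Since two $p$-plateaux are disjoint or equal, and distinct plateaux for the \emph{same} prime are disjoint, the combinatorics is manageable, but the possibility that a subgraph is simultaneously a $p$-plateau and a $p'$-plateau (as flagged before Lemma \ref{uf}) needs attention to ensure the ``totally unfolded'' classification is well-defined for the purposes of this count.
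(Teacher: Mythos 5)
Your argument is correct and is essentially the paper's proof: the paper likewise takes a minimal set $A$ hitting every plateau of $\ov\Gamma$ not containing a terminal vertex (so $\ov\mu=\ov t+|A|$), uses Lemma \ref{uf} to show that $C\cup\pi(A)$ together with the terminal vertices meets every plateau of $\Gamma$, hence $\mu\le c+|A|+t$, and concludes with Lemma \ref{delt} exactly as in your displayed chain. The only caveat is that your identity $\mu=t+\mu_{\mathrm{int}}(\Gamma)$ should be stated so as to account for the whole-graph plateau when $t=0$ (the paper's choice of $A$ does this automatically); otherwise the bookkeeping you outline goes through as planned.
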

  
  \begin{proof}  One has $ \ov \mu= \ov t+ | A | $, where $A\inc\ov  \Gamma$ is a subset of minimal cardinality such that every plateau of $\ov\Gamma$ meets $A$ or contains a terminal vertex.
  If $C\inc V$ is a set of minimal cardinality $c$ meeting every minimal plateau of $\Gamma$, the union of  $C\cup\pi(A)$ with the terminal vertices meets every   plateau of $\Gamma$ (totally unfolded or not) by Lemma \ref{uf}, so $\mu\le c+ | A | +t$. Using $\beta +t+c\le\ov  \beta +\ov t$, we get $$\beta +\mu\le \beta +c+ | A | +t\le \ov \beta +\ov t+| A | = \ov \beta +\ov \mu.$$  
    \end{proof}
    
    Proposition \ref{keyg} thus follows fairly directly from the inequality of
Lemma \ref{delt}. Unfortunately, that inequality   does not always hold, and in the general case we will have to deduce the proposition from  the weaker lemma \ref{deltf}.

    \subsection{The general case}
Unless mentioned otherwise, $\pi$ is as in Proposition \ref{keyg}.

The assumption that there is no proper 2-plateau was used  in the previous subsection  to ensure $\Delta_v\ge0$  and $\prod_{j=1} ^{n_e}p_j\ge2n_e+1$. This motivates the following definitions. 

\begin{dfn}[bad vertex, good vertex, $V_g$] A   vertex $v$ of $\Gamma$ is \emph{bad}  if $\Delta_v<0$, i.e.\  if $v$ is terminal and all its preimages in $\ov \Gamma$ have valence 2. The label near a bad vertex is even.

Let $V_g$ be the set of \emph{good} (i.e.\ not bad) vertices of $\Gamma$.
\end{dfn} 

\begin{dfn}[bad plateau]
A minimal plateau $P\inc \Gamma$  is \emph{bad} if it is   2-totally unfolded, its boundary consists of a single edge $e$, and $e$ has exactly 2 lifts in $\ov \Gamma$ (so the origin of $e$ has a single  preimage since $P$ is totally unfolded).
\end{dfn}

\begin{dfn}  [$t_g$, $c_g$]
Let $t_g$ be the number of good terminal vertices, and $c_g$  
 the number of good minimal plateaux. 
\end{dfn} 

\begin{lem} \label{delt2}
 One has $\beta +t_g+c_g\le\ov  \beta +\ov t$.
\end{lem}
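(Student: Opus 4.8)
The plan is to follow the proof of Lemma \ref{delt}, correcting it for bad vertices and for the prime $2$. Write $t_b=t-t_g$ for the number of bad vertices. Each bad vertex is terminal with $\Delta_v=-\frac12$, so splitting the identity $\sum_{v\in V}\Delta_v=(\ov\beta+\ov t)-(\beta+t)$ over $V_g$ and over the bad vertices gives $\sum_{v\in V}\Delta_v=\sum_{v\in V_g}\Delta_v-\frac{t_b}2$. Substituting this back, the inequality to be proved is equivalent to $c_g\le\sum_{v\in V_g}\Delta_v+\frac{t_b}2$. Choosing one vertex in each good minimal plateau shows $c_g\le N_g$, where $N_g$ denotes the number of good minimal plateaux, so it suffices to prove
\[ N_g\le\sum_{v\in V_g}\Delta_v+\tfrac{t_b}2. \]

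As in Lemma \ref{delt}, I would attach to each good minimal plateau $P_i$ a boundary edge $e_i\in\bo P_i$ with origin $v_i\in P_i$ and a prime $p_i$ for which $P_i$ is $p_i$-totally unfolded, taking $p_i$ odd whenever possible. Since $P_i$ is interior, $v_i$ is non-terminal, hence good; terminal good vertices host no $v_i$ and contribute non-negatively to the sum, so they may be dropped. Fix a good non-terminal vertex $v$, write $\ov\Gamma_v=\{x_1,\dots,x_N\}$, and let $n_v$ be the number of $P_i$ with $v_i=v$; these have pairwise distinct primes, since two plateaux of the same prime that both contain $v$ must coincide. Using $d_{x_j}\ge d_v$ and the fact that for a $p_i$-totally unfolded plateau each lift-count $k_{x_j,e_i}$ is divisible by $p_i$, so that $L_{e_i}:=\sum_j k_{x_j,e_i}\ge p_iN$, one rewrites
\[ 2\Delta_v=\sum_{e\in E_v}(L_e-N)+(N-1)(d_v-2), \]
a sum of non-negative terms. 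Grouping the plateaux at $v$ by their edge and using that a product of $k$ distinct odd primes is at least $2k+1$, one obtains $\sum_{e}(L_e-N)\ge 2n_v$, hence $\Delta_v\ge n_v$, whenever all the $p_i$ at $v$ are odd, or whenever $N\ge2$ (the latter because even a single factor $p_i=2$ then contributes $L_{e_i}-N\ge N\ge2$).

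The bound $\Delta_v\ge n_v$ can therefore fail only in a \emph{deficient} configuration: $v$ has a single preimage $x$ and one plateau assigned to $v$ uses $p_i=2$ with $e_i$ carrying exactly two lifts at $x$. At most one such plateau occurs at a given $v$ (distinct primes), and it lowers $\Delta_v$ by at most $\frac12$ below $n_v$, so $\sum_{v\in V_g}\Delta_v\ge N_g-\frac12 D$, where $D$ is the number of deficient configurations. Everything then reduces to the inequality $D\le t_b$, which is the main obstacle and the reason the term $\frac{t_b}2$ is present. The natural strategy is to match the deficient configurations injectively to the bad vertices: a deficient plateau $P_i$ is \emph{good}, hence is not a \emph{bad plateau}, so $e_i$ cannot be its only boundary edge with two lifts, and one exploits that over a deficient vertex the branch of $\ov\Gamma$ is forced by $(*)$ to be thin (even multiplicity, successive lifts of valence $2$), terminating in $\Gamma$ at a terminal vertex all of whose preimages have valence $2$, i.e. at a bad vertex. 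Checking that this matching is well defined and injective, so that $D\le t_b$, is the delicate combinatorial core of the argument; granting it, the displayed inequality holds and the lemma follows.
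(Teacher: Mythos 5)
Your reduction to $c_g\le\sum_{v\in V_g}\Delta_v+\frac{t_b}2$, the identity $2\Delta_v=\sum_{e\in E_v}(L_e-N)+(N-1)(d_v-2)$, and the isolation of the deficient configuration ($v$ with a single preimage, $p_i=2$, $e_i$ the unique boundary edge of $P_i$ at $v$ with exactly $2$ lifts, giving only $\Delta_v\ge n_v-\frac12$) all match the paper's analysis and are correct. The gap is the final step: the claim $D\le t_b$ is false, and with it the whole strategy of charging deficiencies to bad terminal vertices. A counterexample: let $\Gamma$ be a circle with four vertices, let $P$ be an arc which is an interior minimal $2$-plateau with two frontier points, and let $\pi$ be the branched covering ramified over $P$ as in the proof of Proposition \ref{revet}. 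Then each of the two boundary edges of $P$ has exactly $2$ lifts, each frontier point has a single preimage, so the chosen frontier point of $P$ is deficient and $D\ge1$; but $\Gamma$ has no terminal vertex at all, so $t_b=0$. There is no "thin branch terminating at a bad vertex": the edge $e_i$ simply leads back into the rest of the graph, and condition $(*)$ forces nothing of the sort.

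The missing $\frac12$ must instead be recovered \emph{locally from the plateau itself}, which is exactly what goodness of $P_i$ provides and is the route the paper takes. You even state the key fact --- since $P_i$ is not a bad plateau, $e_i$ cannot be its entire boundary with exactly $2$ lifts --- but then abandon it. Used correctly: if $e_i$ has more than $2$ lifts, or $\bo P_i$ contains a second edge at $v$, one of your displayed inequalities is strict and $\Delta_v\ge n_v$ after all; otherwise $P_i$ has a second frontier point $w_i$, every edge of $\bo P_i$ at $w_i$ has at least $2$ lifts (being divisible by $2$), and this extra, unassigned lift gives $\Delta_{w_i}\ge n_{w_i}+\frac12$, whence $\Delta_v+\Delta_{w_i}\ge n_v+n_{w_i}$. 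The $w_i$'s attached to distinct deficient plateaux are distinct because $2$-plateaux are equal or disjoint, so the compensations do not collide and one gets $\sum_{v\in V_g}\Delta_v\ge c_g$ outright --- in fact the stronger inequality, with no need for the $\frac{t_b}2$ slack.
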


\begin{proof}

We argue as in the proof of Lemma \ref{delt}. Since $\Delta_v=-\frac12$ if $v$ is bad, we have $\sum_{v\in V\setminus V_g}\Delta_v=
-\frac12(t-t_g)$ and 
$$\sum_{v\in V_g}\Delta_v=\sum_{v\in V }\Delta_v+\frac12(t-t_g)=(\ov \beta +\ov t)-(\beta +  t)+\frac12(t-t_g)\le (\ov \beta +\ov t)-(\beta +  t_g),$$
and we  reduce to  showing $\sum_{v\in V_g}\Delta_v\ge c_g 
$.
We define $p_i$, $e_i$, $v_i$, $n_v$, $n_e$ as above (with $p_i=2$ now allowed), except that we restrict  to good plateaux, and we try to prove $\Delta_v\ge n_v$ for $v\in V_g$ (we will not succeed in all cases). 

In the proof of Lemma \ref{delt}, we deduced 
  $\Delta_v\ge n_v$ from the  inequalities 
       \begin{eqnarray*}  
           2\Delta_v&\ge& d_x-d_v
       \\		    
      d_x =\sum_{e\in E_v} d_{x,e}&\ge&\sum_{e\in E_v}\,(\prod   _{j=1}^{n_e}
      p_j)
       \\     
  \prod  _{j=1}^{n_e}p_j&\ge&2n_e+1
    . 
    \end{eqnarray*}
     
     The first two inequalities are still true, but the third  inequality 
may be wrong. This happens  precisely when $n_e=1$ and $p_1=2$ (recall that the $p_j$'s are distinct), so we have $\Delta_v\ge n _v$ unless $v$ is the chosen frontier point $v_i$ of a 2-totally unfolded plateau $P_i$. In  this case we have only proved $2\Delta_v\ge 2 n_v-1$ (this does not imply  $\Delta_v\ge n_v$ because $\Delta_v$ is not necessarily an integer). To conclude, we shall use goodness of $P_i$ to   find an edge in $\bo P_i$ with a lift not taken into account in the previous estimates. 

If $e$ has more than 2 lifts in $\ov \Gamma$, or if there are 2 edges with origin $v$ in $\bo P_i$,  
    we still have $\Delta_v\ge n_v$ because one of the first two  inequalities displayed above is strict.  Otherwise, since $P_i$ is good, it has another frontier point $w_i$. Every edge with origin $w_i$ in $\bo P_i$ has at least 2 lifts,  and we   have $\Delta_v+\Delta_{w_i}\ge n_v+n_{w_i}$ even if $\Delta_v< n_v$.   The $w_i$'s are distinct because 2-plateaux are equal or disjoint, so we get $\sum_{v\in V_g}\Delta_v\ge \sum_{v\in V_g}n_v=c_g$.
 \end{proof}
   
n the following definitions, and in Lemma \ref{bt}, the graphs do not have to be labelled, and $\pi$ is just an open morphism between finite connected graphs. 

\begin{dfn}[accordion] \label{acco}  
$\pi:\ov \Gamma\to\Gamma$ is an \emph{accordion}  if $\ov\Gamma$ is homeomorphic to a circle and $\Gamma$ to an interval (see Figure \ref{aco}). Thus the terminal vertices of $\Gamma$ are bad, and $\ov \Gamma$ is the union  of $2n$ intervals (possibly subdivided), each mapped homeomorphically to $\Gamma$. We call $n$ the  \emph{size} of the accordion.
\end{dfn}

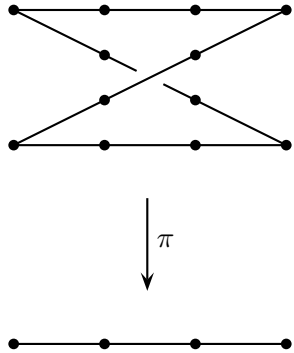
\begin{figure}[h]
\begin{center}
\begin{pspicture}(100,130) 
       \rput (0,75){ 
 \psline  (0,0)(102,0)
 \psline  (0,51)(102,51)
  \psline  (0,0)(102,51)
\psline  (0,51)(46,28)
 \psline  (56,23)(102,0) 
  \pscircle*( 0,0) {2}
   \pscircle*( 0,51) {2} \pscircle*( 102,51) {2} \pscircle*( 102,0) {2}   
   \pscircle*( 34,0) {2}
   \pscircle*( 68,0) {2} \pscircle*( 34,51) {2} \pscircle*( 68,51) {2}   
   \pscircle*( 34,34) {2}
   \pscircle*( 68,34) {2} \pscircle*( 34,17) {2} \pscircle*( 68,17) {2}
 }
\psline [arrowsize=5pt] {->}(50,55)(50,20)  
 \rput(57,39){$\pi$} 
 \psline  (0,0)(102,0) 
   \pscircle*( 0,0) {2}
     \pscircle*( 34,0) {2}  \pscircle*( 68,0) {2}  \pscircle*( 102,0) {2}\end{pspicture}
\end{center}
\caption{an accordion of size 2
} \label{aco}
\end{figure}

\begin{dfn} [branched covering] \label{bc}  
$\pi:\ov \Gamma\to\Gamma$ is a \emph{branched 2-covering of a tree} (or simply a branched covering) if $\Gamma$ is a tree, and for $u\in \Gamma$ the preimage $\ov \Gamma_u$ consists of a single  point  if $u$ is   a terminal vertex of $\Gamma$, of 2 points otherwise (see Figure \ref{bct}). In particular, all terminal vertices of $\Gamma$ are bad. 
\end{dfn}

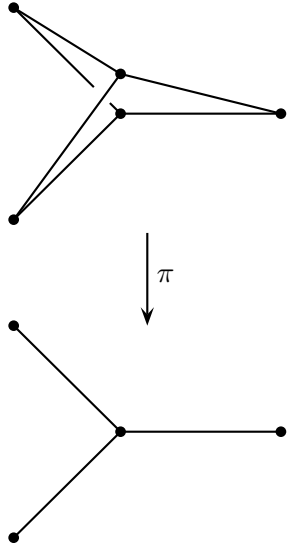
\begin{figure}[h]
\begin{center}
\begin{pspicture}(100,205) 
   \psline  (0,0)(40,40)   
   \psline  (0,80)(40,40)      
    \psline  (40,40)(100,40)  
  \pscircle*( 0,0) {2}
   \pscircle*( 40,40) {2} \pscircle*( 100,40) {2} \pscircle*( 0,80) {2}
   \rput (0,120){ 
     \psline  (0,0)(40,40) 
     \psline  (0,80)(30,50) 
   \psline  (36,44)(40,40)         
    \psline  (40,40)(100,40)    
     \psline  (0,0)(40,55) 
   \psline  (0,80)(40,55) 
    \psline  (40,55)(100,40)      
 \pscircle*( 0,0) {2}
   \pscircle*( 40,40) {2} \pscircle*( 100,40) {2} \pscircle*( 0,80) {2}
     \pscircle*( 40,55) {2}
    }
   \rput (0,60){ 
\psline[arrowsize=5pt]  {->}(50,55)(50,20)
 \rput(57,39){$\pi$}
 }
\end{pspicture}
\end{center}
\caption{a branched 2-covering of a tree
} \label{bct}
\end{figure}

An accordion of size 1 is a branched  covering.

The following topological lemma may be seen as a warm-up for the proof of Lemma \ref{deltf}.

\begin{lem} \label{bt} Let $\pi:\ov \Gamma\to\Gamma$ be an open morphism between finite connected graphs.  
One has $\beta +t=\ov  \beta +\ov t+1$ if $\pi$ is   an accordion or a branched 2-covering of a tree, $\beta +t\le\ov  \beta +\ov t$ otherwise.
\end{lem}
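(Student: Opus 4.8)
The plan is to reduce everything to the signed local contributions $\Delta_v$ introduced above, using the identity $\sum_{v\in V}\Delta_v=(\ov\beta+\ov t)-(\beta+t)$ from Lemma \ref{simp}. Rewriting the desired conclusion, I must show that $\sum_{v\in V}\Delta_v=-1$ precisely when $\pi$ is an accordion or a branched $2$-covering of a tree, and $\sum_{v\in V}\Delta_v\ge 0$ in all other cases. First I would recall that $\Delta_v<0$ happens only at bad vertices, where $v$ is terminal and every $x\in\ov\Gamma_v$ has valence $2$, giving $\Delta_v=-\tfrac12$; at all other vertices $\Delta_v\ge 0$, and moreover $\Delta_v\ge\tfrac12$ whenever $\Delta_v>0$ since $\Delta_v\in\tfrac12\Z$. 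So the whole problem is bookkeeping: a negative sum can only be achieved by having enough bad vertices and too few vertices with strictly positive $\Delta_v$ to compensate.

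The key structural step is to analyze when $\sum_v\Delta_v\le -1$, i.e.\ at least two ``units'' of $-\tfrac12$ are not cancelled. Since $\beta\le\ov\beta$ and each bad vertex is terminal with even label, I would separate two regimes according to $\beta$. \emph{Case $\beta=0$ ($\Gamma$ a tree):} here $\ov\beta\ge 0$, and I would argue that to make $\sum_v\Delta_v<0$ every terminal vertex of $\Gamma$ must be bad (each contributing $-\tfrac12$) while no interior vertex contributes a positive amount; openness of $\pi$ forces each interior $\ov\Gamma_v$ to consist of valence-$\ge 2$ vertices whose total excess valence is zero, which pins down $\ov\Gamma_v$ to exactly two points of valence $2$ over each non-terminal $v$ and a single point over each terminal $v$. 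This is exactly the definition of a branched $2$-covering of a tree, and a short Euler-characteristic count then gives $\sum_v\Delta_v=-1$ on the nose. \emph{Case $\beta\ge 1$:} then $\Gamma$ has a loop; openness lifts a power of that loop to a loop in $\ov\Gamma$, so $\ov\beta\ge 1$ as well, and I would show that any bad terminal vertex forces extra valence elsewhere. The most delicate subcase is when $\Gamma$ is homeomorphic to an interval (so $\beta=0$, $t=2$) but $\ov\Gamma$ has a loop: this is the accordion, where the two bad endpoints each give $-\tfrac12$ and the interior preimages are all valence $2$, yielding $\sum_v\Delta_v=-1$ again.

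The main obstacle I anticipate is the \emph{rigidity argument}: showing that $\sum_v\Delta_v=-1$ (rather than $\le -1$, which one could in principle push further negative) forces the preimage structure to be \emph{globally} uniform, so that $\ov\Gamma$ is a single circle (accordion) or the doubled tree (branched covering) and nothing else. Concretely, I must rule out configurations where two bad vertices coexist with an interior vertex of positive $\Delta_v$ that ``over-compensates,'' or where the valence-$2$ condition fails at some interior preimage and breaks the homeomorphism-onto-$\Gamma$ property of each branch. I expect to handle this by tracking, for each $v$, the partition of $\ov\Gamma_v$ by valence and using the total-multiplicity constant $\sum_{x\in\pi\m(v)}m_x$ together with the local count $d_x=\sum_{e\in E_v}d_{x,e}$; connectedness of $\ov\Gamma$ then glues the local pictures into the two global shapes. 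Once the rigidity is established, the equality $\beta+t=\ov\beta+\ov t+1$ follows immediately from $\sum_v\Delta_v=-1$, and the complementary inequality $\beta+t\le\ov\beta+\ov t$ in every other case is exactly the statement $\sum_v\Delta_v\ge 0$.
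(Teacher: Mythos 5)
There is a genuine gap, and also a concrete error in the local analysis. First, the error: you claim that in the tree case, forcing $\sum_v\Delta_v<0$ ``pins down $\ov\Gamma_v$ to exactly two points of valence $2$ over each non-terminal $v$'' and that no interior vertex may contribute positively. This is false for branched $2$-coverings of trees with branch points (Figure \ref{bct} of the paper): over the central valence-$3$ vertex $c$ there are two preimages, each of valence $3$, so $\Delta_c=\tfrac12>0$, yet the total is still $-1$ because the three bad terminal vertices contribute $-\tfrac32$. So the exceptional configurations are \emph{not} characterized by the absence of positive local contributions, and your proposed rigidity mechanism would wrongly exclude exactly the cases the lemma must identify. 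Second, you invoke the total-multiplicity constant $\sum_{x\in\pi\m(v)}m_x$ as a tool, but Lemma \ref{bt} is a purely topological statement about open morphisms of unlabelled graphs; no multiplicities are available here (they only enter in Lemma \ref{par} and Lemma \ref{deltf}, which assume the hypotheses of Proposition \ref{keyg}).

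The deeper issue is that the heart of the lemma --- showing $\sum_v\Delta_v\ge -1$ always, with equality only for accordions and branched coverings --- cannot be obtained by summing local estimates at vertices, since several bad vertices each contribute $-\tfrac12$ and nothing local prevents the sum from dropping below $-1$; the compensation is a global phenomenon tied to connectivity. The paper handles this by induction on the number of edges: starting from a bad vertex $v$ it constructs a specific vertex $w$ (walking through valence-$2$ vertices whose preimages are too thin), deletes the segment $vw$, and splits into cases according to whether $\pi\m(\Gamma')$ is connected, using Lemma \ref{simp} to control how $\ov\beta+\ov t$ changes and then classifying the possible preimages of $vw$ (arc, circle, or lollipop) when all inequalities are equalities. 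Your plan contains no working substitute for this inductive pruning step, so the ``rigidity argument'' you flag as the main obstacle remains unproved.
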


\begin{proof} 
If $\pi$ is an accordion, one has $\beta +t=0+2=2$  and $\ov \beta +\ov t=1+0=1$.
If it  is a branched 2-covering of a tree, then $\beta =\ov t=0$ and $\ov \beta =t-1$. We now consider the general case. 

As a preliminary observation, note that a connected graph satisfies $\beta +t\ge2$ unless it is homeomorphic to a point or a circle. If $\ov\Gamma$ is a circle, $\Gamma$ is a circle or an interval. Also note that the lemma is true if $\Gamma$ has only one edge. We will argue by induction on the number of edges of $\Gamma$. 

Let $v$ be a bad vertex of $\Gamma$ (the result follows from Lemma \ref{delt2} if there is none). We define a vertex 
$w$ as follows (see Figure \ref{dw}). 

\begin{figure}[h]
\begin{center}
\begin{pspicture}(100,140) 
 \psline  (0,0)(140,0)
  \pscircle*( 0,0) {2}
   \pscircle*( 60,0) {2} \pscircle*( 120,0) {2}  
   \rput( 0,10){$v$}
     \rput( 60,10){$v_1$}
       \rput( 120,10){$w$}
\psline[arrowsize=5pt]  {->}(70,55)(70,30)
   \rput(77,43){$\pi$}     
 \rput(0,100){ 
 \psline  (0,0)(60,20)
 \psline  (0,0)(60,-20)
  \psline  (60,-20)(120,-10)
\psline  (60,-20)(120,-30)
\psline  (60,20)(120,20)
 \psline  (120,20)(140,25)
 \psline  (120,20)(140,15)
  \psline  (120,-10)(140,-5)
\psline  (120,-10)(140,-15)
\psline  (120,-30)(140,-35)
 \psline  (120,-30)(140,-25)
\rput(60,30){$x_0$}
  \pscircle*( 0,0) {2}
   \pscircle*( 60,20) {2} \pscircle*( 60,-20) {2}  
     \pscircle*( 120,20) {2} \pscircle*( 120,-10) {2}   \pscircle*( 120,-30) {2}   
 }  
\end{pspicture}
\end{center}
\setlength\belowcaptionskip{-.5cm}
\caption{the vertex $w$ ($w$ cannot  be $v_1$ because of $x_0$)
} \label{dw}
\end{figure}
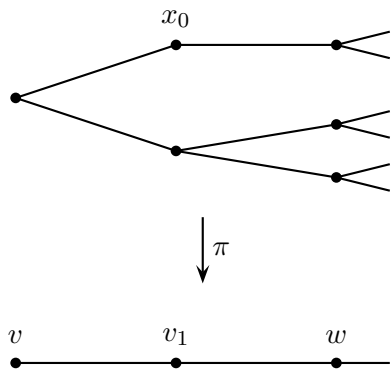

Let $v_1$ be the vertex adjacent to the terminal vertex $v$. 
We   let $w=v_1$ if  the valence of $v_1$ is different from 2, or if  the valence is 2 and every $x\in\ov \Gamma_{v_1}$ is the origin of at least 2 edges not mapping onto $v_1v$. Otherwise, we consider the vertex $v_2\ne v$ adjacent to $v_1$ and we iterate. We obtain a vertex $w=v_q$ such that $w$ has valence $\ne2$, or $w$ has valence 2 and every $x\in\ov \Gamma_{w}$ is the origin of at least 2 edges not mapping onto $wv_{q-1}$.

If  $w $  is  terminal, then   $\Gamma$ is a segment,    $\beta +t=2$, and $\ov \beta +\ov t\ge2$ since  $\pi$ is not an accordion, so the lemma is proved. 

Otherwise, we consider 
 the graph $\Gamma'$ obtained from $\Gamma$ by removing the segment $vw$ (but not the vertex $w$). No vertex of  $\ov \Gamma_{w}$ is terminal in $\ov \Gamma'=\pi\m(\Gamma')$. With obvious notations, one has $\beta '=\beta $ and $t'\ge t-1$, with equality if and only if $w$ has valence $\ge3$ in $\Gamma$.  
We distinguish two cases. 
  
  $\bullet$ First case: $\ov \Gamma'$ is connected. We then consider  $\ov \beta '$ and $\ov t'$. The second equality of Lemma \ref{simp} implies $\ov \beta '+\ov t'<\ov \beta +\ov t$,  because any $x\in\ov\Gamma_w$ has valence $\ge2$ in $\ov \Gamma'$, so $ | \frac{d_x}2-1 | $ is larger when computed in $\ov \Gamma$ than when computed in $\ov \Gamma'$. 
  
  The restriction   $\pi':\ov \Gamma'\to\Gamma'$ of $\pi$ is open,
  so we may use  induction  
  and write 
  $$
 \beta +t\le \beta '+t'+1\le \ov \beta '+\ov t'+2\le \ov \beta +\ov t+1.
$$

The lemma is proved if one of the inequalities is strict. If not,   $w$ has valence $\ge3$ in $\Gamma$, the map $\pi'$ is an accordion or a branched covering, and $\ov \beta '+\ov t'=\ov \beta +\ov t-1$. 

This equality drastically limits the possibilities for the preimage of the segment $vw$ (see Figure \ref{preim}): either $w$   has two preimages $x,x'$, and $\pi\m(vw)$ is an arc joining them, or $w$ has a single preimage $x$, and  
$\pi\m(vw)$ is a circle containing $x$ or a lollipop (an arc $xy$ with a circle attached to $y$).

\begin{figure}[h]
\begin{center}
\begin{pspicture}(100,140)  
\psline[arrowsize=5pt] {->}(50,55)(50,25)
  \rput(57,41){$\pi$}
 \psline  (0,0)(100,0) 
   \pscircle*( 0,0) {2}
     \pscircle*( 100,0) {2}    \rput( 0,10){$v$}
            \rput( 100,10){$w$}
      \rput (-150,100){  
     \psline  (0,0)(100,20) 
          \psline  (0,0)(100,-20) 
          \rput(110,20){$x$}
             \rput(112,-18){$x'$}
   \pscircle*( 0,0) {2}
     \pscircle*( 100,20) {2}   \pscircle*( 100,-20) {2}  
}
    \rput (50,100){  
    \psellipse(0,0)(50,20)
           \rput(60, 0){$x$}
     \pscircle*( 50,0) {2}   
}
    \rput (200,100){  
    \psellipse(-25,0)(25,20)
    \psline(0,0)(50,0)
           \rput(60, 0){$x$}
             \rput(7,8){$y$}
     \pscircle*( 50,0) {2}    
}
\end{pspicture}
\end{center}
\caption{possible preimages of $vw$
} \label{preim}
\end{figure}
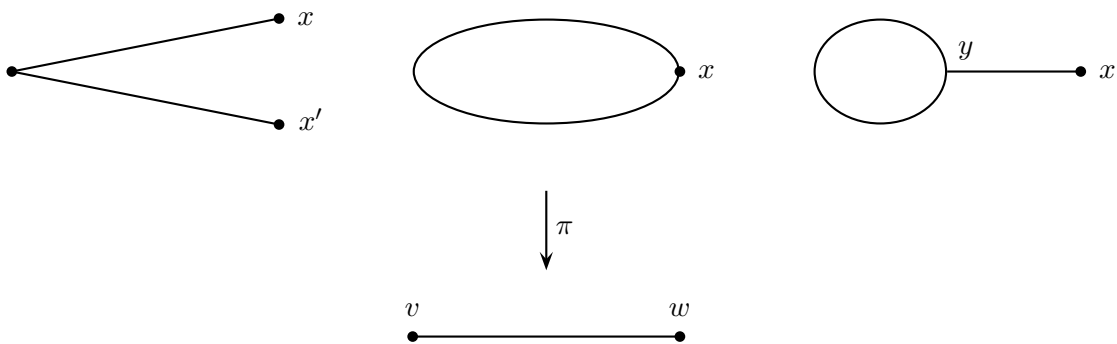

We complete the proof by showing that $\pi$ must be  a branched covering.
Since $\pi'$ is an accordion or a branched covering, $w$ has at least 2 preimages  because it is not terminal in $\Gamma'$. 
It follows that  $w$   has two preimages, and $\pi\m(vw)$ is an arc joining them, so $\pi$ is  a branched covering  of the tree $\Gamma=\Gamma'\cup vw$.

$\bullet$ Second case: $\ov \Gamma'$ has several components $\ov \Gamma'_i$. By induction, $\ov \beta '_i+\ov t'_i\ge \beta '+t'-1$. Also, $ \ov \beta +\ov t\ge\sum_i(\ov \beta '_i+\ov t'_i)$ since no vertex in $\ov\Gamma_w$ is terminal in $\ov\Gamma' $.
We then write
$$\ov \beta +\ov t\ge\sum_i(\ov \beta '_i+\ov t'_i)\ge2(\beta '+t'-1)\ge2(\beta +t-2)=\beta +t+(\beta +t-4).
$$
 
 We are done if $\beta +t\ge4$. Since $\Gamma$ is not a circle, one cannot have $\beta +t=1$. If $\beta +t=2$, one has $\ov \beta +\ov t\ge2$ since $\ov \Gamma$ is not a circle. There remains the possibility that $\beta +t=3$ and $\ov \beta +\ov t=2$. If this happens, one has $\sum_i(\ov \beta '_i+\ov t'_i)=2$, so $\ov \Gamma'$ consists of two disjoint circles and $\ov \Gamma$ is their union with an arc. It follows that $\Gamma$ is homeomorphic to an arc or a lollipop, 
 so $\beta +t=2$.
\end{proof}

\begin{lem} \label{par} Let $\pi:\ov \Gamma\to\Gamma$ be as in Proposition \ref{keyg}. If $P$ is a 2-plateau, the parity of the cardinality of $\pi\m(u)$ is the same for all $u\in P$.
If $P$ is a 2-totally unfolded plateau, and   $P\ne\Gamma$, all points in $\pi\m(P)$ have even multiplicity; in particular, the total multiplicity of $\pi$ is even.
\end{lem}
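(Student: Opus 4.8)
The plan is to read off everything from the local description of lifts in Lemma \ref{loc} together with the openness of $\pi$, using repeatedly the two elementary facts recorded in Subsection \ref{main}: for a lift $\ov e$ of $e$ at $x$ one has $m_{\ov e}=m_x/k_{x,e}$ with $k_{x,e}=m_x\wedge\lambda_e$, so that if $\lambda_e$ is odd then $k_{x,e}$ is odd and $m_x,m_{\ov e}$ have the same parity (in fact the same $2$-adic valuation), whereas if $\lambda_e$ is even then $k_{x,e}$ is even if and only if $m_x$ is.

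For the first assertion I would compare $|\pi\m(u)|$ at neighbouring points of $P$. If $u$ lies in the interior of an edge $e\inc P$, every point of $\pi\m(u)$ is an interior point of a lift of $e$, so $|\pi\m(u)|$ equals the total number of lifts of $e$, namely $\sum_{x\in\pi\m(o(e))}k_{x,e}$. Since $e\inc P$ its labels are odd, so each $k_{x,e}$ is odd and this sum is $\equiv|\pi\m(o(e))|\pmod 2$. Counting the same lifts from the other endpoint $w=o(\tilde e)$ gives $|\pi\m(u)|\equiv|\pi\m(w)|\pmod 2$ as well. Hence the two endpoints of any edge of $P$, together with the interior points of that edge, all have preimages of the same cardinality mod $2$, and connectedness of $P$ makes this parity constant on $P$.

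For the second assertion I would first isolate a propagation principle: if $\ov e$ is a lift of an edge $e\inc P$ with endpoints $x$ and $y$, then $\lambda_e$ and $\lambda_{\tilde e}$ are both odd, so $m_x\equiv m_{\ov e}\equiv m_y\pmod 2$; consequently the parity of the multiplicity is constant on each connected component of $\pi\m(P)$, every edge of which is a lift of an edge of $P$. It then remains to seed evenness. Because $P\ne\Gamma$ and $\Gamma$ is connected, there is a frontier edge $e_0$ with origin $v_0\in P$ that is not contained in $P$, so $\lambda_{e_0}$ is even; the hypothesis that $P$ is $2$-totally unfolded says that for every $x\in\pi\m(v_0)$ the number of lifts $k_{x,e_0}$ is even, whence $m_x$ is even. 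Since $\pi$ is open, every component of $\pi\m(P)$ maps onto $P$ and so contains a vertex of $\pi\m(v_0)$; by the propagation principle all multiplicities in that component are even. This shows that every point of $\pi\m(P)$ has even multiplicity, and the final claim is then immediate: choosing any $u\in P$, the total multiplicity $\sum_{x\in\pi\m(u)}m_x$ is a sum of even integers.

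The delicate point I expect is the seeding step. I must make sure that $P\ne\Gamma$ genuinely produces a frontier edge with even label to which the total-unfolding hypothesis applies, and that \emph{every} component of $\pi\m(P)$ meets $\pi\m(v_0)$ — this is precisely where openness of $\pi$ is essential, since otherwise a component carrying only odd multiplicities could escape the argument. The parity-propagation along lifts of edges interior to $P$, which rests on all labels inside a $2$-plateau being odd, is the mechanism that transports evenness from a single frontier point across an entire component.
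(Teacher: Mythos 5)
Your argument is correct and follows the paper's proof essentially verbatim: both assertions are read off from condition $(*)$, using that $k_{x,e}$ is odd for edges inside $P$ (which gives the parity count of fibres and the parity propagation of multiplicities along $\pi\m(P)$) and that total unfolding forces $k_{x,e}$, hence $m_x$, to be even over a frontier point, with openness of $\pi$ supplying this seed in every component of $\pi\m(P)$. The only difference is that you spell out the bookkeeping the paper leaves to the reader.
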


\begin{proof} This follows easily from condition $(*)$. With the notations of Lemma \ref{loc}, if $e\inc P$, then $\lambda_e$ is odd, so $k _{x,e}$ is odd. This proves the first assertion. 

If $P$ is 2-totally unfolded, points of $\ov \Gamma$ mapping to a frontier point  of $P$ have even multiplicity (because $m_x$ is even if $k _{x,e}$ is), and this propagates to all points    of $\pi\m(P)$ since all numbers $k _{x,e}$ associated to edges   in $P$ are odd. 
\end{proof}

\begin{dfn} [generalized branched covering]\label{gbc}
 We say that $\pi$ is a \emph{generalized branched 2-covering of a tree} if   
there exist   minimal 2-plateaux $P_i$  such that: 
\begin{enumerate}
\item the graph obtained from $\Gamma$ by collapsing each $P_i$ to a point is a tree;
\item if $u\in \Gamma$ is not a terminal vertex and does not belong to $\cup_i P_i$, its preimage $\ov \Gamma_u$ consists of 2 points;
\item if $v$ is a terminal vertex of $\Gamma$, or a frontier point of some $P_i$, its preimage is a single point (in particular, terminal vertices are bad). 
\end{enumerate}
The $P_i$'s are the \emph{branching plateaux} of $\pi$. The union of the $P_i$'s and the terminal vertices is the \emph{branching locus}. 
\end{dfn}

See a simple  example on Figure \ref{disp}. The ellipse is the unique branching plateau.

When there is no $P_i$, we recover the definition of a branched covering (Definition \ref{bc}).

Recall (Definition \ref{minim}) that a minimal 2-plateau is interior, $2$-totally unfolded, and minimal among all interior totally unfolded plateaux (not necessarily 2-plateaux). Any two 2-plateaux are equal or disjoint, so the branching plateaux are disjoint.
It follows from Lemma \ref{par}  that all points in a branching plateau have an odd number of preimages, so the branching locus is the set of points with an odd number of preimages.  

\begin{dfn} [exceptional] \label{excep} 
$\pi$ is \emph{exceptional} if it is an accordion or a generalized branched 2-covering of a tree.
\end{dfn}

\begin{lem} \label{deltf}
If $\pi:\ov \Gamma\to\Gamma$ is as in Proposition \ref{keyg}, then:
$$ \beta +t+c\le\ov  \beta +\ov t \mbox{\quad if $\pi$ is not exceptional}$$
$$ \beta +t+c\le\ov  \beta +\ov t +1 \mbox{\quad if $\pi$ is   exceptional.}$$
\end{lem}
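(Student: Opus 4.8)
The plan is to prove Lemma \ref{deltf} by induction on the number of edges of $\Gamma$, following the template of Lemma \ref{bt} but now tracking the correction term $c$ (the minimal cardinality of a set of vertices meeting every minimal plateau) and upgrading the purely topological argument with the labelled-graph information supplied by Lemma \ref{par}. The base case and the case where $\Gamma$ contains no proper $2$-plateau are already settled: indeed, when there is no bad vertex the result is exactly Lemma \ref{delt2} together with the observation that $c=c_g$ (all minimal plateaux are good), which gives $\beta+t+c\le\ov\beta+\ov t$ and places us in the non-exceptional column. So the real content is to handle bad vertices, which by definition are terminal vertices all of whose preimages have valence $2$, and whose incident label is even.

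First I would set up the same reduction as in Lemma \ref{bt}: pick a bad vertex $v$, slide along the segment emanating from $v$ through valence-$2$ vertices (each contributing nothing new) until reaching a vertex $w$ of valence $\ne 2$, or a valence-$2$ vertex $w$ at which the preimages genuinely branch (every $x\in\ov\Gamma_w$ is the origin of at least two edges not mapping onto the incoming segment). I would then delete the open segment $vw$ to form $\Gamma'$ and $\ov\Gamma'=\pi\m(\Gamma')$, apply the restricted open morphism $\pi'$ and the inductive hypothesis, and bookkeep the changes. The quantities $\beta$, $t$ behave exactly as in Lemma \ref{bt}; the genuinely new bookkeeping is in $c$. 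The key point is that deleting $vw$ can destroy at most the minimal plateaux whose chosen frontier structure involved this segment, and because a minimal $2$-plateau that becomes a branching plateau is precisely the configuration recorded in Definition \ref{gbc}, the drop in $c$ from $\Gamma$ to $\Gamma'$ is controlled by whether $\pi'$ is exceptional. I would split into the same two cases as before: $\ov\Gamma'$ connected, and $\ov\Gamma'$ disconnected, chaining the inductive inequalities $\beta+t+c\le\beta'+t'+c'+(\text{error})$ and $\beta'+t'+c'\le\ov\beta'+\ov t'+\epsilon'$ with $\epsilon'\in\{0,1\}$ according to whether $\pi'$ is exceptional.

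The crucial improvement over Lemma \ref{bt} is that here I may use the parity statement of Lemma \ref{par}: for a $2$-totally unfolded proper plateau every point of $\pi\m(P)$ has even multiplicity, and the total multiplicity (hence $\lvert\pi\m(u)\rvert$ for generic $u$) is even. This even/odd dichotomy is exactly what distinguishes a genuine branched $2$-covering from an accordion, and it is what forces the extra loop in $\beta(\ov\Gamma)$ to materialize when a minimal plateau fails to lift to a plateau. Concretely, when the inductive inequalities are all tight and $\pi'$ is exceptional, I would analyze the preimage of the deleted segment $vw$ as in Figure \ref{preim} (arc between two preimages of $w$, or circle/lollipop over a single preimage), and use Lemma \ref{par} to show that reattaching $vw$ either keeps $\pi$ in the exceptional class (extending an accordion, or adjoining a new branching plateau to build a generalized branched covering) or else creates a strict gain of $1$ in $\ov\beta+\ov t$ that absorbs the $+1$ and the contribution of the newly-counted minimal plateau to $c$.

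The main obstacle I anticipate is the interaction between $c$ and the definition of ``generalized branched $2$-covering'': a minimal plateau of $\Gamma$ that is $2$-totally unfolded and whose components in $\ov\Gamma$ are not plateaux is exactly a candidate branching plateau, and I must verify that such a plateau contributes to $c$ but that its destruction when forming $\ov\Gamma'$ is matched by the exceptional bonus in the induction. The delicate accounting is that the parameter $c$ counts a transversal to \emph{all} minimal plateaux, not one per plateau, so the inductive step must ensure that collapsing or deleting segments changes $c$ by at most the number of plateaux genuinely removed, and that the resulting $\pi$ lands in the exceptional class precisely when the accumulated deficit is $+1$. I expect most of the work to be a careful case check confirming that the three defining conditions of Definition \ref{gbc} are inherited and that an accordion of size $n$ (which has $\beta+t=2$, $\ov\beta+\ov t=1$, and $c=0$) and a generalized branched covering (where each branching plateau accounts for one unit of $c$ offset by one unit of $\ov\beta$) are the only configurations producing the $+1$, with every other gluing yielding the strict inequality.
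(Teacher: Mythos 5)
Your overall strategy — induction on the number of edges, the bad-vertex surgery from Lemma \ref{bt}, the accounting $c\le c'+1$, and the use of Lemma \ref{par} to decide when $\pi$ stays in the exceptional class — matches the paper's proof for the bad-vertex branch. But there is a genuine gap at the very start: you assert that when there is no bad vertex the result is ``exactly Lemma \ref{delt2} together with the observation that $c=c_g$ (all minimal plateaux are good).'' This is false. A \emph{bad plateau} (a minimal $2$-totally unfolded plateau whose boundary is a single edge with exactly $2$ lifts) is an \emph{interior} plateau; its existence has nothing to do with the existence of bad (terminal) vertices. So one can have $t=t_g$ while $c_g<c$, and Lemma \ref{delt2} then only gives $\beta+t+c_g\le\ov\beta+\ov t$, which is strictly weaker than what you need. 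The paper's reduction is to the case where there is \emph{neither} a bad vertex \emph{nor} a bad plateau, and it must treat the bad-plateau case as a separate branch of the induction.

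That branch requires a different surgery which your proposal never describes: if $P$ is a bad plateau with unique frontier point $v$, one removes $P\setminus\{v\}$ (not a segment emanating from a terminal vertex), and the bookkeeping now involves the first Betti numbers of $P$ and of its connected preimage $\ov P$, namely $\beta'=\beta-\beta(P)$, $\ov\beta'=\ov\beta-\beta(\ov P)$, $t'=t+1$, $\ov t'=\ov t$, together with $\beta(\ov P)\ge\beta(P)$ from Lemma \ref{simp2} and $c\le c'+1$ from minimality of $P$. Your segment-sliding construction of $w$ does not apply here because there is no bad vertex to start from. A secondary omission: in the bad-vertex branch you do not address the case where $\Gamma'$ is entirely contained in a minimal plateau of $\Gamma$ (the paper's case 2), which is precisely the situation where $t=1$, $c=1$ and the inequality $t\le t'+1$ and $c\le c'+1$ can both be tight simultaneously, forcing a separate argument. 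Without these two branches the induction does not close.
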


Recall that $c$ is the minimal cardinality of a set of vertices meeting every minimal plateau.

\begin{proof} We elaborate on the proof of Lemma \ref{bt}.  
If $\Gamma'$ is a connected subgraph, the restriction $\pi'$ of $\pi$ to the preimage $\ov \Gamma'$ 
 satisfies $(*)$, and $\Gamma'$ is reduced, so we can argue inductively,  defining minimal plateaux of $\Gamma'$, and a number $c'$,  as in Subsection \ref{sim} (Definition \ref{minim}).  

The lemma follows from Lemma \ref{bt} if $c=0$, in particular if $\Gamma$ has  only one edge or if $\pi$ is an accordion. It follows from Lemma \ref{delt2} if there is no bad vertex or bad plateau.

$\bullet$ First suppose that there is a bad vertex $v$. We define $w$, $\Gamma'$,   $\ov \Gamma'$ as in the proof of Lemma \ref{bt}. 

 It follows from the definition of $w$ that any interior  totally unfolded plateau which meets the segment $vw$ must contain $w$. In particular $c=0$ if $w$ is terminal, so the result is true in this case. We therefore assume that $w$ is not terminal (in particular, $\ov \Gamma$ is not a circle).

 The intersection of a minimal plateau of $\Gamma$ with $\Gamma'$ is totally unfolded, but it is not necessarily interior: it may be equal to $\Gamma'$, and $w$ may be a terminal vertex of $\Gamma'$  (if it has valence 2 in $\Gamma$). 
 
 If $A\inc \Gamma'$ meets every minimal plateau of $\Gamma'$, then $A\cup\{w\}$ meets every minimal plateau of $\Gamma$, so $c\le c'+1$. Equality is possible only if $\Gamma'$ is contained in  a minimal plateau of $\Gamma$, or $w$ has valence 2 in $\Gamma$.

1) First assume that no minimal plateau of $\Gamma$ contains $\Gamma'$. Then 
$$ \beta +t+c\le \beta '+t'+c'+1$$ since $t\le t'+1$ with equality only if $w$ has valence $>2$ in $\Gamma$ (see the proof of Lemma \ref{bt}), and $c\le c'+1$ with equality only if $w$ has valence 2 (and of course $\beta =\beta '$). We now argue as in the proof of Lemma \ref{bt}, distinguishing two subcases.  
  
1a)  If $\ov \Gamma'$ is connected, we have 
$$\beta +t+c\le \beta '+t'+c'+1\le \ov \beta '+\ov t'+1\le\ov  \beta +\ov t$$ if $\pi'$ is not exceptional (as above, the last inequality follows from Lemma \ref{simp}). If $\pi'$ is   exceptional,  we only get $\beta +t+c\le  \ov \beta +\ov t+1$. We assume that equality holds, and we show that $\pi$ is exceptional or $c=0$. 

We have    $\ov \beta '+\ov t'=\ov \beta +\ov t-1$, so $\ov \Gamma\setminus\ov \Gamma'$ is very restricted (as pointed out in the proof of Lemma \ref{bt}, $\pi\m(vw)$ is an  arc, a circle, or a lollipop). In particular,  $w$ has at most 2 preimages; if $\pi'$ is an accordion, it has size (in the sense of Definition \ref{acco}) at most 2. 

If $w$ has   2 preimages, then $\pi$ is exceptional  with the same $P_i$'s as $\pi'$ if $\pi'$ is a generalized branched covering, while $c=0$ if $\pi'$ is an accordion of size 2. If $w$ has 1 preimage, let $w_1w$ be the maximal subsegment of $vw$ consisting of points with a single preimage ($w_1=\pi(y)$ if $y$ is as on Figure \ref{preim}). Points between $v$ and $w_1$ have 2 preimages  ($w_1\ne v$ since $v$ is a bad vertex).

Define the branching locus of $\pi$ as the union of that of $\pi'$ with $w_1w$ and $\{v\}$.
The component containing $w$ is a   2-totally unfolded plateau of $\Gamma$ because, if $e$ is an oriented edge with origin in $w_1w$, its label $\lambda_e$ has the same parity as the number of lifts of $e$ (the total multiplicity of $\pi$ is even by Lemma \ref{par}). It is minimal because the branching plateaux of $\pi'$ are. 
The conditions of Definition \ref{gbc} are satisfied, so $\pi$ is exceptional.  

1b) If $\ov \Gamma'$ has several components $\ov \Gamma'_i$,
then
$$\ov \beta +\ov t\ge\sum_i (\ov \beta '_i+\ov t'_i)\ge 2(\beta '+t'+c'-1)\ge 2(\beta +t+c-2)= \beta +t+c+(\beta +t+c-4 ).$$
As in the proof of Lemma \ref{bt}, we just have to rule out the possibility that  $\beta +t+c=3$ and $\ov \beta +\ov t=2$. If this happens, $\ov \Gamma'$ is the union of two disjoint circles and an arc, mapping onto an arc or a lollipop, and $c=0$.  

2) If some minimal plateau $P$ of $\Gamma$ contains $\Gamma'$, we have $t=1$ (the only terminal vertex of $\Gamma$ is $v$), $c=1$ (every minimal plateau contains $w$), and $\ov t=\ov t'=0$.

2a) If $\ov \Gamma'$ is connected, we have 
$$\beta +c+t=\beta '+2\le \ov \beta '+2\le \ov \beta   +1.$$
We assume that both inequalities are equalities, and we show that $\pi$ is exceptional. 

The equality $\ov \beta '+2= \ov \beta   +1$ implies that one gets $\ov \Gamma$ from $\ov \Gamma'$ by attaching an arc, a circle,  or a lollipop. It cannot be an arc because of the totally unfolded interior plateau $P$ containing $\Gamma'$. It follows that $w$ has a single preimage.

If $\pi'$ is   exceptional, one shows that $\pi$ is exceptional as in case 1a.
If not, we have $\beta '+t'\le \ov \beta '+\ov t'= \ov \beta '$ by Lemma \ref{bt}, so $t'=0$ since we assume  $\beta '=\ov \beta '$. Thus $\Gamma'$ and $\ov \Gamma'$ are graphs with no terminal vertices, and $\beta '=\ov \beta '$ implies that $\pi'$ is an isomorphism by   Lemma \ref{simp}. It follows that $\pi$ is exceptional, with branching locus $\Gamma'\cup w_1w\cup\{v\}$.

2b)  If $\ov \Gamma'$ is not connected, we have 
$$ \beta +c+t-1=  \beta '+1\le2\beta '\le \ov \beta '_1+\ov t'_1+\ov \beta '_2 +\ov t'_2\le \ov \beta +\ov t.
$$

The first inequality holds because $\beta '\ge1$: since $\Gamma'$ is contained in  a minimal plateau of $\Gamma$, no vertex other than $w$ may be   terminal in  $\Gamma'$. The second inequality comes from Lemma \ref{simp2}. The third inequality was used in the proof of Lemma \ref{bt};  as in the previous subcase, the existence of $P$ implies that $\ov \Gamma\setminus\ov  \Gamma'$ cannot be an open segment, so the inequality is strict. We deduce $ \beta +c+t \le\ov \beta +\ov t$.

$\bullet$ We now assume that there is a bad plateau $P$. Let $v$ be its unique frontier point. Let $\Gamma'$ be the graph obtained from $\Gamma$ by removing $P\setminus\{v\}$. The preimages $\ov \Gamma'$ of $\Gamma'$ and $\ov P$ of $P$ are connected because $\ov\Gamma_v$ is a single point. 

Denoting by $\beta (P)$ and $\beta (\ov P)$ the first Betti numbers of $P$ and its preimage $\ov P$, we have $\beta (\ov P)\ge \beta (P)$ by Lemma \ref{simp2} and 
 \begin{eqnarray*} 
 t'&=&t+1 \\
\ov  t'&=&\ov t \\
 \beta '&=&\beta -\beta (P) \\
 \ov \beta '&=&\ov \beta -\beta (\ov P).
  \end{eqnarray*} 

By minimality of $P$, any minimal plateau of $\Gamma$ either contains $v$, or is contained in $\Gamma'\setminus \{v\}$ and is a minimal plateau of $\Gamma'$. This shows  $c\le c'+1$.
By induction we get 
 $$\beta +t+c\le \beta '+\beta (P) +t'-1+c'+1\le \beta '+\beta (\ov P)+t'+c'\le \ov \beta '+\ov t'+\beta (\ov P)=\ov \beta +\ov t
$$
if  $\pi'$  is not  exceptional. If $\pi'$ is exceptional, so is $\pi$ (with $P$ added to the branching locus), and we get $\beta +t+c\le \ov  \beta +\ov t+1$ as required.
 \end{proof}
 
 We can now conclude.
 
 \begin{proof}[Proof of Proposition \ref
 {keyg}]
 
 If $\pi$ is not exceptional, we write
 $$\beta +\mu\le \beta +c+ | A | +t\le \ov \beta +\ov t+| A | = \ov \beta +\ov \mu$$ 
 as in the proof of Corollary \ref{cassim}, with $A\inc \ov\Gamma$   a subset of minimal cardinality 
 such that every plateau of $\ov \Gamma$ meets $A$ or contains a terminal vertex.
 
 If $\pi$ is exceptional, we only have $\beta +c+t\le \ov \beta +\ov t+1$. In this case we get the required inequality $\beta +\mu\le \ov \beta +\ov \mu$ by   showing $\mu<c+ | A | +t$. We fix $x\in A$, noting that $A$ is nonempty because $\ov \Gamma$ has no terminal vertices.  
 
 If $\pi$ is an accordion, we claim that $\pi(A\setminus\{x\})$ meets every interior plateau $P\inc \Gamma$. This implies $\mu< | A | +t$, hence the result since $c=0$. To prove the claim, consider the preimage of $P$. It  consists of $2n$ disjoint arcs, each a plateau of $\ov \Gamma$ (with $n\ge1$ as in Definition \ref{acco}). Each of these plateaux meets $A$, so one of them contains a point of $A\setminus\{x\}$. 
 
 In the case of a generalized branched covering, $c$ is bounded below by $k$, defined as the number of branching plateaux $P_i$. We prove $\mu<c+ | A | +t$ by constructing a set $C\inc \Gamma$ of cardinality at most $k+ | A | +t-1$ meeting every plateau.
  There are two cases. 
  
  First suppose that  $\pi(x)$ belongs to a branching plateau $P_{i_0}$. For $i\ne i_0$, let $v_i$ be the point of $P_i$ closest to $P_{i_0}$; it is well-defined because one gets a tree by collapsing each $P_i$  to a point. Let $C$ consist of the $v_i$'s, the terminal vertices, and $\pi(A)$. We show that $C$ meets every interior plateau $P\inc \Gamma$.
   
  If $P$ is not totally unfolded, it meets $\pi(A)$ by Lemma \ref{uf}, so assume that it is.  If it is contained in some $P_i$, it equals $P_i$ by minimality of $P_i$, so contains $\pi(x)$ or   $v_i$. Otherwise, because of condition 2 in Definition \ref{gbc}, $P$ contains the closure of a component of $\Gamma\setminus\cup_i P_i$. Since one gets a tree by collapsing each $P_i$  to a point, this closure contains a $v_i$ or a terminal vertex. 
 
 Now suppose that $\pi(x)$ belongs to no $P_i$. For each $i$ we let $v_i$ be the point of $P_i$ closest to $\pi(x)$, and we let $C$ consist of the $v_i$'s, the terminal vertices, and $\pi(A\setminus\{x\})$.
 
 Arguing as in the previous case, we see that $C$ meets every totally unfolded interior plateau. We just have to check that it meets interior plateaux $P$ contained in the component of $\Gamma\setminus\cup_i P_i$ containing $\pi(x)$. As in the case of an accordion, $\pi\m(P)$ has two components, each a plateau of $\ov \Gamma$.  At least one of them does not contain $x$, so $P$ meets  $\pi(A\setminus\{x\})$.
  \end{proof}
  
  \section{Plateaux and finite index subgroups} \label{pfi}
  
  Let $\Gamma$ be a labelled graph representing a GBS group $G$. 
  We have seen in Subsection \ref{reduc} that any finite index subgroup $\ov G $ yields a map $\pi:\ov\Gamma\to\Gamma$ between labelled graphs. We formalize the properties of $\pi$.
  
    \begin{dfn}[Admissible map] Let $\ov\Gamma$ and $\Gamma$ be     labelled graphs (not necessarily connected). An \emph{admissible map} from $\ov\Gamma$ to $\Gamma$   is a pair $(\pi,m)$ where $\pi:\ov\Gamma\to\Gamma$   is a morphism, and $m$  assigns a positive multiplicity   to each vertex and edge of $\ov\Gamma$ so that condition $(*)$ of Lemma \ref{loc} is satisfied. We usually denote an admissible map simply by $\pi$, keeping $m$ implicit. 
    
   If $\Gamma$ is connected, the \emph{total multiplicity} of $\pi$ is  
$\sum_{x\in\pi\m(u)}m_x$; it does not depend on $u\in \Gamma$.    \end{dfn}

Recall   condition $(*)$:  if $\pi(x)=o(e)$, then  $e$ has $k_{x,e}= m_x\wedge \lambda_e $ lifts with origin $x$, each with multiplicity $m_x/k_{x,e}$ and label $\lambda_e/k_{x,e}$.

  \begin{rem}
Suppose $\ov\Gamma$ is connected. Given an admissible $\pi:\ov \Gamma\to\Gamma$, there are infinitely many $m$'s such that $(\pi,m)$ is admissible. One may show that  all of them are multiples of a single $m_0$ for which no prime divides all edge   multiplicities.  
\end{rem}

If $\pi:\ov\Gamma\to\Gamma$ and $\pi':\Gamma\to \Gamma'$ are admissible, so is $\pi'\circ \pi$, with multiplicity function $m (m'\circ \pi) $;  
this is easy to check, using the formula $ \lambda \wedge \mu\mu' =(\lambda\wedge \mu')(\frac{\lambda}{ \lambda\wedge \mu' }\wedge \mu)$.
  
  \begin{lem}   \label{adm}
  Let $\Gamma$ be a labelled graph representing a GBS group $G$. 
\begin{enumerate}
\item
If $\ov G$ is a subgroup of finite index, there is a labelled graph $\ov \Gamma$ representing $\ov G$ and an admissible map $\pi:\ov\Gamma\to\Gamma$ whose total multiplicity  is the index of $\ov G$.
 
\item Conversely, any admissible map $\pi:\ov\Gamma\to\Gamma$ with $\ov\Gamma$ connected may be obtained from a finite index subgroup $\ov G$ as in 1. In particular, the group represented by $\ov \Gamma$ embeds into $G$ as a finite index subgroup.
\end{enumerate}
\end{lem}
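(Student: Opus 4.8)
The plan is to treat the two assertions as the two directions of a single correspondence between finite index subgroups of $G$ and coverings of the foliated complex $\Theta$ of Subsection \ref{reduc}, so that the topological machinery already set up there does most of the work.

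For assertion 1 I would take $\ov\Gamma=T/\ov G$, with $\pi:\ov\Gamma\to\Gamma$ the projection induced by $\ov G\inc G$, exactly as in Subsection \ref{reduc}. Since $[G:\ov G]<\infty$, the $\ov G$-stabilizer of a vertex or edge of $T$ is a finite index subgroup of the corresponding cyclic $G$-stabilizer, hence again infinite cyclic; thus $\ov\Gamma$ is a labelled graph representing the GBS group $\ov G$, with multiplicities $m_x=[G_p:\ov G\cap G_p]$, and condition $(*)$ is precisely Lemma \ref{loc}. It then remains to identify the total multiplicity with the index. Fixing $u\in\Gamma$ and a point $p\in T$ above it, the fibre $\pi\m(u)$ is the set of $\ov G$-orbits in the $G$-orbit $G\cdot p\cong G/G_p$, i.e.\ the double cosets $\ov G\backslash G/G_p$; the orbit of $gp$ has multiplicity $[gG_pg\m:\ov G\cap gG_pg\m]=[G_p:G_p\cap g\m\ov G g]$, which equals the number of right cosets of $\ov G$ contained in $\ov G g G_p$. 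Summing over double cosets gives $\sum_{x\in\pi\m(u)}m_x=[G:\ov G]$, as desired.

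For assertion 2 I would reverse this construction. Given an admissible $(\pi,m)$ with $\ov\Gamma$ connected, form the foliated complexes $\Theta$ and $\ov\Theta$ attached to $\Gamma$ and $\ov\Gamma$, so $\pi_1(\Theta)=G$ and $\pi_1(\ov\Theta)$ is the GBS group represented by $\ov\Gamma$. Using the multiplicities, I would build a map $\rho:\ov\Theta\to\Theta$ which on the circle $C_x$ is a covering of degree $m_x$ and on the core circle of the annulus over $\ov e$ a covering of degree $m_{\ov e}$. The point is that condition $(*)$ is exactly the compatibility needed for these local maps to glue along the attaching circles: the identity $k_{x,e}\cdot(m_x/k_{x,e})=m_x$ together with the prescribed label $\lambda_e/k_{x,e}$ forces the degrees and wrapping numbers to match wherever an annulus is glued to $C_x$. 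Hence $\rho$ is a genuine covering, of degree $\sum_{x\in\pi\m(u)}m_x$ over each $C_u$, namely the total multiplicity. A covering induces an injection $\rho_*:\pi_1(\ov\Theta)\hookrightarrow\pi_1(\Theta)=G$ whose image $\ov G:=\rho_*\pi_1(\ov\Theta)$ has index equal to the number of sheets, i.e.\ the total multiplicity. Since the Bass--Serre tree of $\ov\Theta$ is equivariantly identified with $T$ and $T/\ov G=\ov\Gamma$, applying assertion 1 to this $\ov G$ returns $\ov\Gamma$ and $\pi$, so every admissible map arises in this way.

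The routine parts are the coset count in assertion 1 and the degree computation in assertion 2; the main obstacle is the gluing step, namely verifying that condition $(*)$ is precisely the local compatibility that makes $\rho$ a covering rather than a branched or merely immersed map. Equivalently, the whole argument can be phrased through Bass's theory \cite{Ba}: an admissible map is exactly a covering of graphs of groups in his sense, and the embedding with index equal to the number of sheets is his basic theorem, condition $(*)$ being the specialization of his local condition to graphs with all groups $\Z$, where the relevant indices and coset counts reduce to the gcd/lcm identities of Lemma \ref{loc}.
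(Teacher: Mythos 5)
Your proposal is correct and follows essentially the same route as the paper: assertion 1 is the construction of Subsection \ref{reduc} (your double-coset count just makes explicit the claim, already stated there, that the total multiplicity equals the index), and assertion 2 is proved by building the covering $\rho:\ov\Theta\to\Theta$ of foliated $2$-complexes circle by circle with degree $m_y$ on $C_y$, condition $(*)$ being exactly the compatibility with the attaching maps of the annuli. The paper likewise notes the alternative via Bass's covering theory of graphs of groups, so nothing essential differs.
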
  
  
\begin{proof}
The first assertion was proved in Subsection \ref{reduc}.  The second assertion may   be deduced from \cite{Ba}, but we provide a direct proof. 
We consider the 2-complex $\Theta$ associated to $\Gamma$ as in Subsection \ref{reduc} and we construct a covering map $\rho:\ov \Theta\to \Theta$ inducing $\pi$. The desired group $\ov G$ is the fundamental group of $\ov \Theta$.

Let $C=\R/\Z$ be the standard circle. We view $\Theta$ as the union of circles $C_v=\{v\}\times C$ and annuli $A_\varepsilon=\varepsilon\times C$. The complex $\ov \Theta$ is made of circles $C_x=\{x\}\times C$ associated to vertices of $\ov\Gamma$ and annuli $A_{\ov\varepsilon}=\ov\varepsilon\times C$ associated to edges. If $x$ is an endpoint of $\ov\varepsilon$, we attach the corresponding boundary circle of $A_{\ov\varepsilon}$ to $C_x$ by the map $(x,\theta)\mapsto (x,\lambda\theta)$, where $\lambda$ is the label carried by $\ov\varepsilon$ near $x$. To define $\rho$, we map each circle $\{y\}\times C$, for $y\in\ov\Gamma$, to $\{\pi(y)\}\times C$ by $(y,\theta)\mapsto (\pi(y),m_y\theta)$. Condition $(*)$ ensures that this is compatible with the attaching maps of $\Theta$ and $\ov \Theta$. 
\end{proof}  
  
If $\pi: \Gamma'\to\Gamma$ is a covering map between finite graphs (in the topological sense), and $\Gamma$ is labelled, then labelling $ \Gamma'$ so that $\pi$ is label-preserving, and letting $m$ be  constant (coprime with all labels) on each component of $  \Gamma'$, makes $\pi$ admissible. 
Such an admissible map may be characterized as follows.

  \begin{lem} \label{cove}
  Let $\pi:\ov\Gamma\to\Gamma$ be an admissible map between labelled graphs. The following conditions are equivalent:
  \begin{enumerate}
\item $\pi$ is a covering map (in the topological sense);
\item all numbers $k_{x,e}= m_x\wedge\lambda_e $ are equal to 1;
\item $\pi$ preserves labels: $\lambda_{\pi(\ov e)}=\lambda_{\ov e}$;
\item the multiplicity $m$ is constant on every component of $\ov\Gamma$.
\end{enumerate}
 \end{lem}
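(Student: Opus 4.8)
The plan is to take condition $(2)$ --- that every $k_{x,e}=m_x\wedge\lambda_e$ equals $1$ --- as the hub, and to prove that each of $(1)$, $(3)$, $(4)$ is equivalent to it. Every one of these three equivalences can be read off directly from condition $(*)$, which completely describes the local behaviour of $\pi$ at a vertex $x$: the edge $e$ with origin $\pi(x)$ has exactly $k_{x,e}$ lifts at $x$, each carrying multiplicity $m_x/k_{x,e}$ and label $\lambda_e/k_{x,e}$. Throughout I would use that $k_{x,e}>0$, $\lambda_e\ne0$ and $m_x>0$, so that $k_{x,e}=1$ is the only way to achieve either $\lambda_e/k_{x,e}=\lambda_e$ or $m_x/k_{x,e}=m_x$.

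For $(1)\Leftrightarrow(2)$ I would argue that a graph morphism is a topological covering precisely when it is a local homeomorphism, i.e.\ when for each vertex $x$ it restricts to a bijection from the edges with origin $x$ onto the edges with origin $\pi(x)$. By $(*)$ the fibre over a given $e\in E_{\pi(x)}$ has cardinality $k_{x,e}$, so this restriction is a bijection for every $x$ if and only if $k_{x,e}=1$ for all $x$ and $e$. For $(3)\Leftrightarrow(2)$ the computation is one line: by $(*)$ a lift $\ov e$ of $e$ at $x$ has $\lambda_{\ov e}=\lambda_e/k_{x,e}$ while $\pi(\ov e)=e$, so $\pi$ preserves labels exactly when $\lambda_e/k_{x,e}=\lambda_e$ for every such $\ov e$, which (as $\lambda_e\ne0$, $k_{x,e}>0$) forces every $k_{x,e}=1$, and conversely.

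For $(4)\Leftrightarrow(2)$, again by $(*)$ a lift $\ov e$ at $x$ satisfies $m_{\ov e}=m_{o(\ov e)}/k_{x,e}$, so $m_{\ov e}=m_{o(\ov e)}$ holds if and only if $k_{x,e}=1$. Thus $(2)$ says precisely that the multiplicity of every edge agrees with the multiplicity at each of its two endpoints (apply this to both orientations of $\ov e$), and such local constancy is equivalent to $m$ being constant on each connected component of $\ov\Gamma$, which is $(4)$. The main obstacle --- in fact the only step that is not purely formal --- is this last passage from the local equalities $m_{\ov e}=m_{o(\ov e)}$ to global constancy on components; it is settled by a routine connectivity argument, observing that the equality holds at both ends of every edge, so $m$ cannot change value as one moves along edges inside a component. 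The remaining points are mere bookkeeping: one records that $k_{x,e}>0$ and $\lambda_e\ne0$ make the divisions legitimate and genuinely force $k_{x,e}=1$, and that an isolated vertex carries no incident edge, rendering all four conditions vacuously compatible there.
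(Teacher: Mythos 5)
Your proposal is correct and follows the same route as the paper: the paper's proof is a one-line observation that, by condition $(*)$, condition 2 is equivalent to each of being a covering, preserving labels, and every edge having the same multiplicity as its endpoints (whence constancy on components). You have simply written out the details, including the routine connectivity step, that the paper leaves to the reader.
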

  
If these conditions are satisfied, we say that   $\pi$ is a \emph{topological covering}, or that $\ov \Gamma$ is a topological covering of $\Gamma$. Note that every admissible $\pi$   is a covering of graphs of groups in the sense of \cite{Ba}.  

  \begin{proof} By $(*)$,   condition 2 is equivalent to $\pi$ being a covering, to $\pi$ being label-preserving, and also to every edge of $\ov\Gamma$ having the same multiplicity as its endpoints.
\end{proof}

 \begin{prop} \label{revet}
 Given a connected  labelled graph $\Gamma$, the   following conditions are equivalent:
   \begin{itemize}
\item every admissible $\pi:\ov\Gamma\to\Gamma$ is a topological covering;
\item $\Gamma$ contains no proper plateau. 
 \end{itemize}

\end{prop}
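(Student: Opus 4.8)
The plan is to prove the two implications separately, arguing each by contraposition and using Lemma \ref{cove}, which says that an admissible map fails to be a topological covering precisely when some $k_{x,e}=m_x\wedge\lambda_e$ exceeds $1$. Throughout I write $v_p(n)$ for the exponent of the prime $p$ in $n$, so that $m\wedge\lambda$ has $p$-exponent $\min(v_p(m),v_p(\lambda))$.

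\textbf{From a proper plateau to a non-covering admissible map.} Suppose $\Gamma$ contains a proper $p$-plateau $P$. I would build the \emph{branched $p$-covering over $P$} alluded to in the introduction. Assign multiplicity $p$ to every vertex and edge of a single copy of $P$, give each vertex $u\notin P$ exactly $p$ preimages of multiplicity $1$, and let condition $(*)$ of Lemma \ref{loc} dictate the edges: an edge $e\subseteq P$ has $k=1$ and lifts once (with multiplicity $p$), whereas an edge $e$ with $o(e)\in P$ and $p\mid\lambda_e$ has, at the single preimage $\tilde v$ of $v=o(e)$, exactly $k_{\tilde v,e}=p\wedge\lambda_e=p$ lifts of multiplicity $1$, which I attach bijectively to the $p$ sheets over the other endpoint. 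Checking admissibility then reduces to the four local cases of Lemma \ref{loc}, all of which are consistent. Because $\Gamma$ is connected and $P\ne\Gamma$, there is at least one edge $e$ with $o(e)\in P$ and $p\mid\lambda_e$ (an edge leaving $P$, or one joining two vertices of $P$ without lying in $P$), so $k_{\tilde v,e}=p>1$ and the resulting admissible map is not a topological covering by Lemma \ref{cove}.

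\textbf{From a non-covering admissible map to a proper plateau.} Conversely, let $\pi$ be admissible but not a covering. By Lemma \ref{cove} there exist $x$ and an edge $e$ at $\pi(x)$ and a prime $p$ dividing both $m_x$ and $\lambda_e$. Mimicking the proof of Proposition \ref{menage}, I set $\delta_v=\max_{x\in\pi\m(v)}v_p(m_x)$ and let $\delta\ge 1$ be the global maximum, then define a subgraph $P\subseteq\Gamma$ consisting of the vertices with $\delta_v=\delta$ together with those edges whose two endpoints both satisfy $\delta_{\boldsymbol\cdot}=\delta$ and \emph{both} of whose labels $\lambda_e,\lambda_{\tilde e}$ are prime to $p$. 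The crucial claim, proved by the same gcd bookkeeping as in Lemma \ref{loc}, is: if $\delta_v=\delta$ and $p\nmid\lambda_e$ for an edge $e$ at $v$, then any lift $\ov e$ at a preimage realizing $\delta$ has edge-multiplicity with $p$-exponent $\delta$, so maximality of $\delta$ forces $\delta_{o(\tilde e)}=\delta$ and $p\nmid\lambda_{\tilde e}$. This claim gives the non-trivial half of the plateau condition for each component of $P$, while the other half ($p\mid\lambda_e\Rightarrow e\not\subseteq P$) holds by construction; hence every component of $P$ is a $p$-plateau. The component containing $\pi(x_0)$, for $x_0$ a vertex realizing $\delta$, is a plateau, and it is \emph{proper}, since our edge $e$ has $p\mid\lambda_e$ and is therefore excluded from $P$, forcing $P\ne\Gamma$.

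\textbf{Main obstacle.} I expect the delicate point to be the second direction, and specifically the correct cutting-out of $P$. One is tempted to use the subgraph on which the multiplicities attain the maximal $p$-valuation $\delta$; but an edge can drop below level $\delta$ while its $\ov\Gamma$-label stays prime to $p$ (this happens exactly when $0<v_p(\lambda_e)\le\delta$), so that subgraph is \emph{not} a plateau. The remedy, exactly as in Proposition \ref{menage}, is to define $P$ using the labels of $\Gamma$ rather than the edge multiplicities: this makes the easy half of the plateau condition automatic and concentrates all the content in the single cross-edge claim, where the global maximality of $\delta$ is precisely what closes the argument.
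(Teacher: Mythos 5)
Your proposal is correct and follows essentially the same route as the paper: one direction via the branched $p$-covering ramified over $P$ (with the single sheet over $P$ and $p$ sheets elsewhere, labels divided by $p$ on boundary edges), and the other via the subgraph of vertices whose preimages attain the maximal $p$-valuation $\delta$ of the multiplicities, with the same gcd bookkeeping showing each component is a proper $p$-plateau. The "main obstacle" you flag — defining $P$ by the labels of $\Gamma$ rather than by edge multiplicities — is exactly the point the paper's argument turns on as well.
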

\begin{proof}

$\bullet$  We first suppose that $\pi$ is not a covering,  and we construct a proper plateau as in the proof of Proposition \ref{menage}.   Using Lemma \ref{cove}, fix a prime $p$ dividing some $k_{x_0 ,e_0 }$. It divides       $m_{x_0}$, so let $p^ \delta$ with $\delta\ge1$ be the maximal power of $p$ dividing the multiplicity of a vertex. 

We define a subgraph $\Gamma_1\inc \Gamma$ as follows. A vertex is in $\Gamma_1$ if and only if it has a preimage whose multiplicity is divisible by $p^ \delta$; an edge $\varepsilon$ is in $\Gamma_1$ if its endpoints are,  and none of the two labels carried by $\varepsilon$ is divisible by $p$. Note that $\Gamma_1\ne\Gamma$ because  $p$ divides $\lambda_{e_0}$. We show that every component of $\Gamma_1$ is a $p$-plateau. 

Consider an edge $e$ with origin $v\in\Gamma_1$ such that $p$ does not divide $\lambda _e$. We have to check that its other endpoint $w$ is in $\Gamma_1$, and the label  
near $w$ is not divisible by $p$.
 Let $x$ be a preimage of $v$ with multiplicity divisible by $p^\delta$, and $\ov e$ a lift of $e$ with origin $x$. Since $p$ does not divide $\lambda_{  e}$, the multiplicity of $\ov e$ is divisible by $p^\delta$. So is  the multiplicity of the terminal point $y$ of $\ov e$. In particular, $w\in \Gamma_1$. By our choice of $\delta$, the multiplicity of $y$ cannot be divisible by $p^{\delta+1}$. This implies that the label near $w$ 
 is not divisible by $p$. 

  $\bullet$ Conversely, given a proper $p$-plateau $P$, we construct a \emph{branched covering ramified over $P$}. It is an admissible map $\pi:\ov\Gamma\to\Gamma$ which is not a topological covering: points in $P$ have one preimage, points not in $P$ have $p$ preimages. Compare Figure \ref{disp}, which represents a covering ramified over the union of two 2-plateaux (the terminal vertex and the ellipse).
  
  To construct $\ov\Gamma$, we start with $\Gamma\times \{1,\dots,p\}$, and we identify $(x,i)$ with $(x,j)$ whenever $x\in P$. The map from $\ov\Gamma$ to $\Gamma$ is the natural projection. The multiplicity of (the image of) $(x,i)$ is defined as 1 if $x\notin P$, as $p$ if $x\in P$. The label of an edge $\ov e$ of $\ov\Gamma$ is the same as the label of its projection $e$, except if the origin of $e$  is in $P$ but $e$ is not contained in $P$; in this case $\lambda_e$ is divisible by $p$ and we define $\lambda_{\ov e} =\lambda_e/p$. One checks that $(*)$ is satisfied.
 \end{proof}
  
\begin{cor} \label{cov}
   Let $\Gamma$ be a labelled graph representing a GBS group $G$. Suppose that $\Gamma$ contains no proper plateau. Then a GBS group $\ov G$ is isomorphic to a finite index subgroup of $G$ if and only if it may be represented by a labelled graph $\ov \Gamma$ which is a topological covering of $\Gamma$. \qed
\end{cor}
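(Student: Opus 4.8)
The plan is to read off both implications directly from Lemma \ref{adm} and Proposition \ref{revet}, using the hypothesis that $\Gamma$ contains no proper plateau to upgrade admissible maps to topological coverings. No new construction is needed; this is an assembly of the preceding results.

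For the ``only if'' direction I would take a finite index subgroup $\ov G\inc G$ and apply Lemma \ref{adm}(1) to produce a labelled graph $\ov\Gamma$ representing $\ov G$ together with an admissible map $\pi:\ov\Gamma\to\Gamma$ whose total multiplicity is the index of $\ov G$. Because $\Gamma$ contains no proper plateau, Proposition \ref{revet} asserts that every admissible map into $\Gamma$ is a topological covering; in particular $\pi$ is one. Hence $\ov G$ is represented by a labelled graph that is a topological covering of $\Gamma$, as required.

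For the ``if'' direction I would start from a labelled graph $\ov\Gamma$ representing $\ov G$ that is a topological covering of $\Gamma$. A topological covering is in particular an admissible map (the observation recorded just before Lemma \ref{cove}, with $m$ taken constant and coprime with the labels on each component), and $\ov\Gamma$ is connected since it represents a GBS group. Lemma \ref{adm}(2) then applies and states exactly that the group represented by $\ov\Gamma$, namely $\ov G$, embeds into $G$ as a finite index subgroup.

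I expect no genuine obstacle here, which is why the statement is tagged with \qed; the only point requiring care is bookkeeping of identifications. In the forward direction one must use that the $\ov\Gamma$ furnished by Lemma \ref{adm}(1) genuinely represents the prescribed subgroup $\ov G$ rather than some abstractly isomorphic group, and in the converse one must note that ``isomorphic to a finite index subgroup'' is precisely the conclusion delivered by Lemma \ref{adm}(2) once the connectedness of $\ov\Gamma$ has been observed.
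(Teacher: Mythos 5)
Your proof is correct and is exactly the argument the paper intends: the corollary is stated with \qed precisely because it is the immediate combination of Lemma \ref{adm} (both directions) with Proposition \ref{revet}, which is what you assemble. The bookkeeping points you flag (that Lemma \ref{adm}(1) produces a graph representing the given subgroup itself, and that connectedness is needed for Lemma \ref{adm}(2)) are the right ones and are handled correctly.
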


 Recall that a group is \emph{large} if some finite index subgroup maps onto the free group $F_2$. A    GBS group represented by a labelled graph $\Gamma$ maps onto $F_2$ if and only if $\beta (\Gamma)\ge2$ (see \cite{Le}, p.\ 483). 

 \begin{thm} \label{larg}
 Let $G$ be a non-cyclic GBS group. The following are equivalent:
 \begin{enumerate}
\item $G$ is not large;
\item  $G$    may be represented by a labelled graph $\Gamma$   homeomorphic to a circle and containing  no proper plateau;
\end{enumerate}
\end{thm}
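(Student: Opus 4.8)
The plan is to prove the two implications separately, taking $(2)\Rightarrow(1)$ as the easy direction and reducing $(1)\Rightarrow(2)$ to a case analysis on the first Betti number. For $(2)\Rightarrow(1)$, suppose $G$ is represented by a circle $\Gamma$ containing no proper plateau. By Corollary \ref{cov}, every finite index subgroup $\ov G$ is represented by a labelled graph $\ov\Gamma$ which is a topological covering of $\Gamma$. A connected topological covering of a circle is again a circle, so $\beta(\ov\Gamma)=1$. Since a GBS group maps onto $F_2$ only when its representing graph has $\beta\ge2$, no finite index subgroup of $G$ maps onto $F_2$, and $G$ is not large.

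For $(1)\Rightarrow(2)$ I would argue the contrapositive: assuming $G$ cannot be represented by a circle without proper plateau, I produce a finite index subgroup mapping onto $F_2$. First note that $\Z^2$ and the Klein bottle group are each represented by a loop with labels $\pm1$, which is a circle without proper plateau; hence the hypothesis forces $G$ to be non-elementary, so that $\beta=\beta(G)$ is a well-defined invariant. If $\beta\ge2$ then $G$ itself maps onto $F_2$. If $\beta=0$ then $\Gamma$ is a tree, so $\Delta_G$ is trivial and $G$ is unimodular; being non-elementary it is virtually $F_n\times\Z$ with $n\ge2$, and this finite index subgroup maps onto $F_2$.

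The crux is the case $\beta=1$. Here I fix a reduced minimal representation $\Gamma$ and claim it must contain a proper plateau: otherwise $\Gamma$ has no terminal vertex (a terminal vertex of a minimal graph carries a label $\ne\pm1$ and is a proper plateau), so every vertex has valence at least $2$; together with $\beta=1$ this forces $\Gamma$ to be a circle without proper plateau, contradicting the hypothesis. Moreover one may always select a proper plateau that is a \emph{tree}: in the circle case any proper connected subgraph is an arc, and otherwise a terminal vertex serves. I then invoke the branched covering of degree $p$ ramified over such a plateau $P$ constructed in the proof of Proposition \ref{revet}; it is admissible with $\ov\Gamma$ connected, hence represents a finite index subgroup by Lemma \ref{adm}, and an Euler characteristic count gives $\beta(\ov\Gamma)=p\,\beta(\Gamma)-(p-1)\beta(P)=p\ge2$. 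Thus $\ov G$ maps onto $F_2$ and $G$ is large.

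The main obstacle I anticipate is this $\beta=1$ case: one must verify that the failure of $(2)$ genuinely yields a proper plateau in a well-chosen representation, and that the plateau can be taken to be a tree so that the branched covering strictly raises the Betti number to at least $2$. The Euler characteristic bookkeeping for the ramified covering, together with confirming connectedness and finiteness of the index via Lemma \ref{adm}, are the technical points that make this step go through.
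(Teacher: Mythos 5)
Your proof is correct and follows essentially the same route as the paper: Corollary \ref{cov} plus the fact that a connected covering of a circle is a circle for $(2)\Rightarrow(1)$, and a case analysis on $\beta$ with a branched covering ramified over a proper plateau for $(1)\Rightarrow(2)$. Your explicit observation that the plateau can be chosen to be a tree (a terminal vertex, or an arc when $\Gamma$ is a circle), so that the Euler characteristic count gives $\beta(\ov\Gamma)=p\ge2$, is a welcome precision that the paper's one-line assertion $\beta(\ov\Gamma)\ge2$ leaves implicit.
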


This  
was proved in \cite{EP} for $G=BS(m,n)$ (in this case the absence of a proper plateau is equivalent to $ m\wedge n =1$), and independently by T.\ Mecham \cite{Me} in general. When $\Gamma$ is a circle, fix an orientation and denote by $x_i$ (resp.\ $y_i$) the labels of   edges whose orientation agrees (resp.\ disagrees) with that of the circle; the absence of a proper plateau is equivalent to $ \prod x_i\wedge\prod y_i =1$. When $G$ is not large, finite index subgroups $\ov G$ of $G$ are   determined, up to isomorphism, by the degree of the topological covering $\ov \Gamma\to\Gamma$ (see \cite{Dud2, Wh}  for the case of $G=BS(m,n)$ with $m$ and $n$ coprime). It would be interesting to generalize the results of \cite{Bu,Dud2, Ge}  to these groups. 

\begin{proof}  
We may assume that $G$ is non-elementary.
Let $G$ be represented by  $\Gamma$. It is   large if $\beta (\Gamma)\ge2$. If $\Gamma$ is a tree, then $G$ is virtually $F_n\times \Z$  so is large (one may also use branched coverings to see this).  Assume therefore $\beta (\Gamma)=1$. If $\Gamma$ contains a proper plateau $P$ (in particular, if $\Gamma$ is minimal and has a terminal vertex),   a  branched  covering ramified over $P$ as in the proof of Proposition \ref{revet} yields an admissible map $ \ov\Gamma\to\Gamma$ with $\beta (\ov\Gamma)\ge2$, so $G$ is large.  
 On the other hand, since every connected space covering   a circle is a circle,  Corollary \ref{cov} implies that $G$ is not large if $\Gamma$ is a circle with no proper plateau.  
\end{proof}

\begin{prop} \label{papla}
Given a connected labelled graph $\Gamma$, there exists an admissible map $ \pi:\ov\Gamma\to\Gamma$ such that $\ov\Gamma$ is connected and contains no proper plateau. 

Every GBS group has a finite index subgroup represented by  a labelled graph with no proper plateau.
\end{prop}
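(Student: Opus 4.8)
The plan is to derive the second assertion from the first. By Lemma \ref{adm}(2), any admissible $\pi:\ov\Gamma\to\Gamma$ with $\ov\Gamma$ connected realizes the group represented by $\ov\Gamma$ as a finite index subgroup of the group $G$ represented by $\Gamma$; so it suffices to produce a connected $\ov\Gamma$ with no proper plateau. To build it I would use only \emph{branched coverings ramified over proper plateaux}, as constructed in the converse half of the proof of Proposition \ref{revet}. Since admissible maps compose (with multiplicity $m\,(m'\circ\pi)$), any finite composition of these is again admissible, and I would obtain $\ov\Gamma$ by iterating them and finally passing to a connected component.

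First I would record two reductions allowing me to work one prime at a time. A proper $p$-plateau forces $p$ to divide some label: if $p$ divides no label then the only $p$-plateau is $\Gamma$ itself, which is not proper. A branched covering ramified at $p$ changes labels only by dividing some of them by $p$, so no new prime ever appears among the labels of the covers, and only the finitely many primes dividing $\prod_e\lambda_e$ are ever relevant. Moreover, branching at $p$ leaves the $q$-adic valuation of every label unchanged for $q\ne p$ (all the numbers $k_{x,e}=m_x\wedge\lambda_e$ that occur are powers of $p$, since the multiplicities are powers of $p$). Hence if the current graph has no $q$-plateau, then neither does the branched cover: a $q$-plateau upstairs would project to a $q$-plateau of the base, because $q$-divisibility is preserved by $\pi$ and the relevant gcds are prime to $q$. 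This lets me treat the relevant primes successively, destroying $p$-plateaux without recreating any $q$-plateau already destroyed.

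The heart is the single prime case. Fixing $p$, I would first show that the $p$-plateaux are exactly the \emph{cleanly enclosed} components of the subgraph $H_p$ (all vertices, together with the edges both of whose labels are prime to $p$), that is, the components every edge leaving which carries a label divisible by $p$ on the component side. I would then repeatedly branch, with degree $p$, over the union of all maximal $p$-plateaux: each plateau lies in the ramification locus, so it stays a single copy while its boundary labels lose one factor of $p$, whereas the complement is copied $p$-fold. A plateau all of whose boundary valuations were $\ge 2$ survives (with smaller boundary valuations), while one carrying a boundary valuation equal to $1$ acquires a label prime to $p$ on its own side of that boundary edge and so is no longer cleanly enclosed.

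The main obstacle is \emph{termination}, together with the control of newly created plateaux. Branching is global: it copies the complement, and when an opened plateau merges with an adjacent non-plateau component whose remaining exits are all $p$-divisible, a genuinely new, larger $p$-plateau can appear. Consequently neither the number of plateaux nor the total $p$-valuation is monotone. The one quantity that never increases is $M_p=\max_e v_p(\lambda_e)$, since every label of a cover is either an unchanged copy of a label of $\Gamma$ or one of its proper divisors; the crux is to promote this into a well-founded measure. I expect to argue that a high-valuation edge, even if it temporarily sits outside every plateau, is eventually forced onto the boundary of some plateau, once the clean-side exits of its $H_p$-component have been cleared by ramification, and is then decremented; so $M_p$ must strictly drop after finitely many steps. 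An induction on $M_p$, refined by a secondary count of the edges realizing it, should then close the single prime case and hence the proposition. As everywhere in this section, the prime $p=2$ is the delicate one, and I would expect to handle it using the parity information of Lemma \ref{par}.
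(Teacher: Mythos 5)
Your reduction of the second assertion to the first via Lemma \ref{adm}, the prime-by-prime strategy, and the observation that ramifying at $p$ cannot create $p'$-plateaux for $p'\ne p$ all agree with the paper. But the heart of the single-prime case --- termination of the iterated branched covering --- is exactly where your argument stops being a proof. You correctly identify that the iteration is not monotone in any obvious sense (a new, larger $p$-plateau can appear when an opened plateau merges with copied material), and then you write ``I expect to argue'' and ``should then close'': the proposed measure $M_p=\max_e v_p(\lambda_e)$ with a secondary count is never shown to decrease. The claim that a high-valuation edge is ``eventually forced onto the boundary of some plateau'' is precisely the hard combinatorial assertion, and no argument is given for it. Note also that for the prime $p$ itself the projection of a $p$-plateau of the cover need not be a $p$-plateau of the base (the gcds $k_{x,e}$ are powers of $p$, so $p$-divisibility of labels is not preserved under projection), which blocks the most natural attempt to relate the plateaux of successive covers. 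The paper explicitly flags this: ``since the graph grows, it is not obvious that this process terminates.''

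The paper sidesteps termination entirely. Instead of iterating coverings on a growing graph, it iterates a label-division process on the \emph{fixed} graph $\Gamma$: let $\calp_i$ be the union of the proper $p$-plateaux of $\Gamma_i$ and obtain $\Gamma_{i+1}$ by dividing by $p$ the label of each oriented edge of $\bo\calp_i$. This terminates trivially (finitely many labels, each strictly decreasing), say after $r$ steps. Setting $P(v)$ (resp.\ $P(e)$) to be the number of indices $i$ with $v\in\calp_i$ (resp.\ $e\inc\calp_i$), one writes down a \emph{single} admissible map of total multiplicity $p^r$: a vertex $v$ has $p^{r-P(v)}$ preimages of multiplicity $p^{P(v)}$, an edge $e$ has $p^{r-P(e)}$ preimages with label $\lambda_e/p^{P(v)-P(e)}$, and condition $(*)$ reduces to the identity $p^{P(v)-P(e)}=\lambda_e\wedge p^{P(v)}$. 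Since the label of an edge of $\ov\Gamma$ equals the label of its image in $\Gamma_{r+1}$, which has no proper $p$-plateau, neither does $\ov\Gamma$. To salvage your route you would have to actually prove your termination claim; as written the argument has a genuine gap at its central step. (Also, no special treatment of $p=2$ or appeal to Lemma \ref{par} is needed here; those parity subtleties belong to the finite-index rank theorem, not to this proposition.)
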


\begin{proof} 

The second assertion follows from the first one and Lemma \ref{adm}, so we concentrate on the first.
Given $p$, we shall construct an admissible $\pi$ such that $\ov\Gamma$ is connected, $\ov\Gamma$ contains no proper $p$-plateau, and 
all multiplicities are powers of $p$.  
Noting that, for $p'\ne p $, there is no   proper $p'$-plateau in $\ov\Gamma$ if  there is none in $\Gamma$, the proposition follows by induction on the number of primes $p$ such that $\Gamma$ contains a proper $p$-plateau.

The idea is the following. Consider the union of all proper $p$-plateaux in $\Gamma$, and construct a branched covering ramified over it as in the proof of Proposition \ref{revet}. Then iterate. Since the graph grows, it is not obvious that this process terminates, so we prefer to work purely within $\Gamma$.

We construct graphs $\Gamma_i$ which are identical to $\Gamma$, but with different labels. 
Let   $\calp_1$ be the (disjoint) union of the proper $p$-plateaux of $\Gamma_1=\Gamma$. If $\calp_1\ne\es$, define a new labelled graph $\Gamma_2$ by dividing by $p$  the label of each oriented edge $e$ in $\bo \calp_1$ (see Definition \ref{bord}).  Define $\calp_2$ as the union of the proper $p$-plateaux of $\Gamma_2$, and iterate until obtaining $\Gamma_{r+1}$ containing no proper $p$-plateau (the process terminates because labels decrease). 

Given a vertex $v$  of $\Gamma$, define $P(v)$ as the number of $i\in \{1,\dots,r\}$ such that $v\in \calp_i$.
Define $P(e) $ similarly for an edge $e$ by counting the number of times that $e\inc\calp_i$. Note that $P(\tilde e)=P(e)$ if $\tilde e$ is the opposite edge. If $e$ has origin $v$, then  $P(v)- P(e)$  is the number of times that the label of $e$ gets divided by $p$ when passing from $\Gamma$ to $\Gamma_{r+1}$. In particular, $p^{P(v)-P(e)}$ divides the label $\lambda_e$ of $e$ in $\Gamma$.

We now describe $\pi$ by describing preimages. Its total multiplicity is $p^r$. The preimage    
of a vertex $v$ consists of $p^{r-P(v)}$ points, each of multiplicity $p^{P(v)}$. The preimage of an edge $e$ consists of $p^{r-P(e)}$ edges of multiplicity $p^{P(e)}$, each with label $\lambda_e/p^{P(v)-P(e)}$. If $e$ has origin $v$, exactly $p^{P(v)-P(e)}$ lifts of $e$ are attached to each preimage of $v$ (there is a choice in the way lifts of $e$ are attached to preimages of $v$).   

This defines a graph $\ov\Gamma$ and a map $\pi$.  We  check that $(*)$ is satisfied, and $\ov\Gamma$ has no proper $p$-plateau. 

Condition $(*)$ is equivalent to the equality $p^{P(v)-P(e)}=  \lambda_e\wedge p^{P(v)} $, for $e$ an edge with origin $v$. There are two cases. If $P(e)=0$, then $\lambda_e$ is divisible by $p^{P(v)}$, 
so $  \lambda_e\wedge p^{P(v)} =p^{P(v)}$. If $P(e)>0$, there exists $i\in \{1,\dots,r\}$ such that $p$ does not divide the label of $e$ in $\Gamma_i$,  
so $\lambda_e$ is divisible by $p^{P(v)-P(e)}$ but not by $p^{P(v)-P(e)+1}$. Thus $  \lambda_e\wedge p^{P(v)} =p^{P(v)-P(e)}$.

The label of an edge of   $\ov\Gamma$ depends only on its image in  $\Gamma$, and it is the label of that image in the graph $\Gamma_{r+1}$. Since $\Gamma_{r+1}$ has no proper $p$-plateau, it follows that the only possible  proper $p$-plateaux of $\ov\Gamma$ are its connected components. Restricting to a component of $\ov\Gamma$ yields the desired map $\pi$ (one may show that $\ov\Gamma$ is connected, but this is not needed).
\end{proof}

\begin{thm}\label{co} Given two GBS groups $G_1,G_2$ represented by labelled graphs $\Gamma_1,\Gamma_2$ which are strongly slide-free  and contain no proper plateau, one may decide whether $G_1$ and $G_2 $ have isomorphic finite index subgroups or not. 
\end{thm}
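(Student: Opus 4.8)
The plan is to reduce commensurability to the existence of a common finite labelled covering of $\Gamma_1$ and $\Gamma_2$, and then to decide that existence by means of Leighton's theorem. Throughout I may assume $G_1$ and $G_2$ are non-elementary. The key preliminary observation is that topological coverings preserve the two standing hypotheses: if $\pi:\ov\Gamma\to\Gamma$ is a topological covering (Lemma \ref{cove}), it preserves valences, and being label-preserving it carries the labels near a vertex $x$ onto the labels near $\pi(x)$; hence $\ov\Gamma$ is again reduced and strongly slide-free whenever $\Gamma$ is.

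By Corollary \ref{cov}, a GBS group is isomorphic to a finite index subgroup of $G_i$ exactly when it is represented by a topological covering $\ov\Gamma_i\to\Gamma_i$, and by the preliminary step each such $\ov\Gamma_i$ is itself reduced and strongly slide-free. Invoking the uniqueness of reduced strongly slide-free representatives (\cite{Fodef}), two such coverings represent isomorphic groups if and only if $\ov\Gamma_1$ and $\ov\Gamma_2$ are isomorphic as labelled graphs, up to admissible sign changes. I would conclude that $G_1$ and $G_2$ are commensurable if and only if $\Gamma_1$ and $\Gamma_2$ admit a \emph{common finite labelled covering}, that is, a single labelled graph $\ov\Gamma$ which is a topological covering of each $\Gamma_i$. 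Indeed, such a $\ov\Gamma$ represents a group embedding with finite index into both $G_1$ and $G_2$; conversely, given isomorphic finite index subgroups $\ov G_i\inc G_i$ represented by coverings $\ov\Gamma_i\to\Gamma_i$, uniqueness forces $\ov\Gamma_1\cong\ov\Gamma_2$, so $\ov\Gamma_1$ covers $\Gamma_1$ directly and covers $\Gamma_2$ through $\ov\Gamma_1\cong\ov\Gamma_2\to\Gamma_2$. The admissible sign changes are harmless here, being absorbed into the labelling of $\ov\Gamma$.

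It then remains to decide whether $\Gamma_1$ and $\Gamma_2$ possess a common finite labelled covering. Here I would appeal to Leighton's graph covering theorem in its form for labelled graphs (\cite{Lei,Ne}): $\Gamma_1$ and $\Gamma_2$ admit a common finite labelled covering if and only if their labelled universal covers --- the Bass--Serre trees carrying their induced labels --- are isomorphic. This last condition is decidable, because the ball of any given radius $r$ about a base vertex in the universal cover of $\Gamma_i$ is explicitly computable from $\Gamma_i$, while non-isomorphic universal covers must already disagree in some finite ball. The decision procedure thus runs two semi-algorithms in parallel: one enumerates finite labelled graphs and tests, via Lemma \ref{cove}, whether a candidate covers both $\Gamma_i$; the other compares the balls of increasing radius in the two universal covers. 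By Leighton's theorem exactly one of the two halts, and its outcome decides commensurability.

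The main obstacle is precisely this final step, since Leighton's theorem is a pure existence statement: it guarantees that isomorphic universal covers force a common finite covering to exist, but it provides neither an explicit algorithm nor an a priori bound on the degree of that covering. Decidability must therefore be manufactured by pairing the (terminating) search for a common covering with the (terminating) search for a finite ball in which the universal covers disagree, using Leighton exactly to ensure that one of these two searches halts. A secondary, purely technical point is to transport the admissible sign changes and the possibly negative labels correctly through the reduction, so that ``isomorphic as labelled graphs'' is matched with the right label-preserving notion of covering for the oriented labelled graphs.
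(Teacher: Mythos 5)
Your reduction of commensurability to the existence of a common finite labelled covering is exactly the paper's: pass to topological coverings via Corollary \ref{cov}, observe that they inherit minimality and strong slide-freeness, and invoke Forester's uniqueness to identify the two representing graphs. (The paper is slightly more careful about signs: it first passes to index-2 subgroups so that the modular maps are positive and all labels can be taken positive, which is what makes ``up to admissible sign changes'' disappear; you should do this explicitly rather than absorb it in passing.) Where you genuinely diverge is the final decidability step. The paper extracts from Neumann's constructive account of Leighton's theorem an explicit bound $M$ on the number of edges of a common covering, and then decides by inspecting all labelled graphs with at most $M$ edges; you instead dovetail two semi-algorithms --- a search for a common finite labelled covering, and a search for a radius at which the labelled universal covers have no isomorphic pointed balls --- and use Leighton purely as an existence statement to guarantee that exactly one of them halts. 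Your route is softer and does not need the effectivity of Leighton's proof, at the cost of producing no a priori bound; the paper's route yields an explicit bound and a concrete algorithm. Two small corrections to your version: the relevant common universal cover is the labelled \emph{topological} universal cover of the graph (one lift of each edge at each lifted vertex), not the Bass--Serre tree (which has $|\lambda_e|$ lifts of $e$ at each vertex above $o(e)$); and the assertion that non-isomorphic universal covers must disagree on some finite ball needs the standard K\"onig's lemma argument applied to the finitely many pointed balls based at lifts of the (finitely many) vertices of $\Gamma_1$ and $\Gamma_2$ --- for locally finite labelled trees this does go through, so the gap is only expository.
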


$\Gamma$ is \emph{strongly slide-free} \cite{Fodef} if, whenever two edges $e,f$ have the same origin, then $\lambda_e$ does not divide $\lambda_f$. There is at most one minimal strongly slide-free  labelled graph representing $G$ \cite{Fodef}.

The group $G=BS(m,n)$ satisfies the hypotheses of the theorem if and only if $m$ and $n$ are coprime 
(unless $G$ is   solvable).
 The theorem also applies to   GBS groups represented by a circle with no proper plateau. 
\begin{proof}
Consider  $G_1,G_2$  
as in the theorem. After replacing them by a subgroup of index 2 if needed, the modulus maps $\Delta_{G_i}$ only take positive values, so we may assume that all labels are positive (see Section \ref{prel}). We also assume that $\Gamma_1,\Gamma_2$ are minimal. Note that they have no terminal vertex (such a vertex would be a plateau). By a covering, we always mean a label-preserving topological covering between labelled graphs.

Suppose that $H$ is isomorphic to a finite index subgroup of both $G_1$ and $G_2$. By Corollary  \ref{cov}, it is represented by   labelled graphs $\ov\Gamma_i$ which   are coverings of $\Gamma_i$. These graphs are minimal and strongly slide-free, so by Theorem 1.2 of \cite{Fodef} (see also \cite{Gu}) they are the same. This means that  the labelled graphs $\Gamma_1$ and $\Gamma_2$ have a common finite covering if  $G_1$ and $G_2$ are commensurable. The converse is also true by Assertion 2 of Lemma \ref{adm}, so we are reduced to deciding whether   $\Gamma_1$ and $\Gamma_2$ have a common finite covering (as labelled graphs).

We claim that, given $\Gamma_1$ and $\Gamma_2$, we can find a number $M$ such that, if $\Gamma_1$ and $\Gamma_2$ have a common finite covering, then they have one with at most  $M$ edges. Assuming the claim, we can decide existence of a common finite covering by inspection.

We deduce the claim from the fact that the proof of Leighton's graph covering theorem \cite{Lei} is constructive. We follow the account given in \cite{Ne}, proof of Theorem 1.1. 

Assume that $\Gamma_1$ and $\Gamma_2$ have a common finite covering, and denote by $\tilde \Gamma$ their universal covering. We first subdivide edges $e$ of $\Gamma_1$ and $\Gamma_2$ having the same label at both ends,  
placing labels 1 near the created midpoint. This ensures that $\Aut(\tilde \Gamma)$ acts on $\tilde \Gamma$ without inversions. 

As  in \cite{Ne}, we let $C= \tilde \Gamma/\Aut(\tilde \Gamma)$ be the graph of colors. It is a quotient of $\Gamma_i$, so  belongs to an explicit finite list. For each graph in this list, we define numbers $n_i,m_k,r_k$ using $\Gamma_1$ as in \cite{Ne}, and we compute numbers $a_i$ and $b_k$. As shown in \cite{Ne}, $\Gamma_1$ and $\Gamma_2$ have a common covering whose number of edges may be explicity bounded in terms of $C,\Gamma_1,\Gamma_2,b_k$. Since there are only finitely many possibilities for $C$, the existence of $M$ follows. 
\end{proof}

 \begin{flushleft}

Gilbert Levitt\\
Laboratoire de Math\'ematiques Nicolas Oresme\\
Universit\'e de Caen et CNRS (UMR 6139)\\
BP 5186\\
F-14032 Caen Cedex\\
France\\
\emph{e-mail:} \texttt{levitt@unicaen.fr}\\

\end{flushleft}
 
 \end{document}